\documentclass[11pt]{article}
\pdfoutput=1

\usepackage[utf8]{inputenc}
\usepackage[T1]{fontenc}
\usepackage[letterpaper,margin=1in]{geometry}
\usepackage{microtype}
\usepackage[usenames,dvipsnames]{xcolor}
\usepackage{amsmath,amssymb,amsthm,mathtools}
\usepackage{enumitem}
\usepackage{booktabs,array,needspace,longtable,calc}
\usepackage[authoryear,round]{natbib}
\usepackage{hyperref}
\usepackage[nameinlink,noabbrev]{cleveref}
\crefname{theorem}{Theorem}{Theorems}
\crefname{lemma}{Lemma}{Lemmas}
\crefname{corollary}{Corollary}{Corollaries}
\crefname{proposition}{Proposition}{Propositions}
\crefname{definition}{Definition}{Definitions}
\crefname{section}{Section}{Sections}
\crefname{subsection}{Section}{Sections}
\crefname{subsubsection}{Section}{Sections}
\crefname{appendix}{Appendix}{Appendices}
\crefname{table}{Table}{Tables}
\usepackage{etoc}  

\hypersetup{
  pdftitle={Boolean Small-Ball Inequalities for Discrepancy Theory},
  pdfauthor={Emrullah Akbas, Suvrit Sra},
  pdfsubject={Boolean determinant partitions, entropy, and matrix discrepancy},
  pdfkeywords={Boolean small-ball inequality, matrix discrepancy, Fisher information, Kadison-Singer, Matrix Spencer}
}

\usepackage{ss}

\definecolor{cdarkblue}{RGB}{30,75,170}
\definecolor{cdarkred}{RGB}{180,0,0}
\definecolor{cdarkgreen}{RGB}{0,130,0}
\newcommand{\darkblue}[1]{{\color{cdarkblue} #1}}
\newcommand{\darkred}[1]{{\color{cdarkred} #1}}

\setlist[itemize]{topsep=0.35em,itemsep=0.2em,leftmargin=2em}
\setlist[enumerate]{topsep=0.35em,itemsep=0.2em,leftmargin=2.2em}
\allowdisplaybreaks[1]

\newcommand{\R}{\mathbb R}
\newcommand{\C}{\mathbb C}
\newcommand{\E}{\mathbb E}
\newcommand{\Prob}{\mathbb P}
\newcommand{\dd}{\mathrm d}
\newcommand{\Fro}{\mathrm F}
\newcommand{\bstar}{\beta_\ast}
\newcommand{\nlsum}{\sum\nolimits}
\DeclareMathOperator{\Tr}{tr}
\DeclareMathOperator{\Sym}{Sym}
\DeclareMathOperator{\Herm}{Herm}
\DeclareMathOperator{\diag}{diag}
\DeclareMathOperator{\Law}{Law}
\DeclareMathOperator{\vecop}{vec}

\DeclareMathOperator{\disc}{disc}

\newtheorem{theorem}{Theorem}[section]
\newtheorem{proposition}[theorem]{Proposition}
\newtheorem{lemma}[theorem]{Lemma}
\newtheorem{corollary}[theorem]{Corollary}
\theoremstyle{definition}
\newtheorem{definition}[theorem]{Definition}
\theoremstyle{remark}

\numberwithin{equation}{section}

\title{Boolean Small-Ball Inequalities for Discrepancy Theory}
\author{%
\name Emrullah Akbas \email{emrullah.akbas@tum.de}\\
\addr Technical University of Munich, Garching, Germany\\[2pt]
\name Suvrit Sra \email{s.sra@tum.de}\\
\addr Technical University of Munich, Garching, Germany\\[2pt]
}

\begin{document}
\maketitle

\begin{abstract}
We prove new small-ball inequalities for boolean matrix-series. The leading example is $\E_s\bigl[{\det(I-S^2)^\beta\,\mathbf 1_{\{\|S\|<1\}}}\bigr]\ge e^{-O(\beta\tau)}$, which holds for \emph{boolean matrix-series} $S=\sum_i s_iA_i$ formed using symmetric matrices $A_1,\dots,A_n$ and uniformly random signs $s\in\{\pm1\}^n$. Specifically, this inequality holds for all $\beta\ge1$ with $\tau=\sum_i\Tr A_i^2$, as soon as the maximum of $(\Tr A_i^2)_{i=1}^n$ and a certain variance term are bounded above by universal constants. The proof combines the Gaussian reciprocal estimate of Akbas and Sra~\citeyearpar{AS26}, the directional-variation signing theorem of Guo, Fang, and Lu~\citeyearpar{GFL26}, and a replica argument that turns existence into a
Gibbs law on good signings. Most notably, boolean small-ball delivers a \darkblue{\emph{new, interlacing-free proof of
Kadison--Singer}} (most general case); it also recovers \darkblue{\emph{Matrix Spencer and Koml\'os as quick
corollaries}}, while yielding more than six almost immediate proofs of an
assortment of discrepancy theoretic problems.\end{abstract}



\section{Introduction}\label{sec:introduction}

Discrepancy problems seek a coloring of items into two nearly balanced parts. A recurring phenomenon is the gap between the best coloring and a typical one: Spencer's six standard deviations~\citep{Spencer85} beats the random split by a logarithmic factor. Since then discrepancy theory has been shaped by a range of methods, from partial coloring and entropy~\citep{Spencer85,LovettMeka2015}, to convex-geometric approaches~\citep{Ban98,Gluskin1989Extremal,Giannopoulos1997VectorBalancing}, to spectral and polynomial methods~\citep{MSS15,KyngLuhSong2020FourDeviations}.

We pursue a more unified perspective by identifying a common theme underlying many (matrix) discrepancy problems: a \emph{boolean small-ball} (BSB) estimate which lower-bounds the probability that a uniformly random signing lands in a small (but usable) set of good ones. The resulting estimate then yields several fundamental results in discrepancy theory as short corollaries.

\subsection{Summary of the main result}
We study discrepancy problems involving $n$ real symmetric $d\times d$ matrices $(A_1,\ldots,A_n)$. To state our results, we need to define the following notion of ``\emph{variance}'': $\nu:=\min_{U\in O(d)}\|\nlsum_i(A_i-P_UA_i)^2\|$, where $P_U$ denotes orthogonal projection onto the subspace of matrices that are diagonal in the basis $U$. 
The variance parameter $\nu$ measures how close the family is to being commutative; e.g., $\nu=0$ whenever the matrices $A_i$ are simultaneously diagonalizable; and also that $\nu \le4\| \sum_{i=1}^nA_i^2\|$, which puts it  within a factor 4 of the usual variance. Our main result is the following theorem.

\begin{theorem}[Boolean matrix small-ball]\label{thm:main}
There exist universal constants $\nu_0$, $q_1$, and $C_1$ for which the following holds. Let $A_1,\dots,A_n\in\Sym_d(\R)$ and $r>0$ satisfy
the variance bound $\nu\le \nu_0 \times r^2$ and Frobenius square-norm bound $\max_i\Tr A_i^2\le q_1 \times r^2$; also let $S_x=\sum_{i=1}^n x_iA_i$ with $x$ uniform on
$\{-1,1\}^n$. Then, using the shorthand $\tau :=\nlsum_i\Tr A_i^2$, for every $\beta>0$ we have the (det-scaled) small-ball inequality
\begin{equation}\label{eq:brsc-scaled}
\E_{x\sim\{\pm1\}^n}\!\Bigl[
\det\!\bigl(I- r^{-2}S_x^2\bigr)^\beta
\mathbf{1}_{\{\|S_x\|<r\}}
\Bigr]
\ge
\exp\!\bigl(-r^{-2}C_1\max(1,\beta)\tau\bigr).
\end{equation}
Moreover, there is a probability law $\mu_\beta$, supported on $\{x\in\{-1,1\}^n:\|S_x\|<r\}$
such that
\begin{equation}\label{eq:entropy-main}
D(\mu_\beta\|U_n)
+
\beta\,\E_{\mu_\beta}
\Phi\!\left(\tfrac{S_x}{r}\right)
\le
C_1\max(1,\beta)\frac{\tau}{r^2},
\end{equation}
where $\Phi(H):=-\log\det(I-H^2)$ ($:= +\infty$ if $\|H\|\ge1$) and $U_n$ is the uniform distribution on $\{\pm1\}^n$.
\end{theorem}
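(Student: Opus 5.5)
By homogeneity we may take $r=1$; replacing $A_i$ by $A_i/r$ rescales $\nu$, $\max_i\Tr A_i^2$ and $\tau$ by $r^{-2}$. We first derive the entropy statement \eqref{eq:entropy-main} from the small-ball inequality \eqref{eq:brsc-scaled} via a Gibbs variational identity. Set $Z_\beta:=\E_x[e^{-\beta\Phi(S_x)}]$, which is exactly the left side of \eqref{eq:brsc-scaled}. Define the Gibbs law $\mu_\beta(x)\propto e^{-\beta\Phi(S_x)}$. Since $\Phi=+\infty$ off $\{\|S_x\|<1\}$, this law is supported there. The Donsker--Varadhan identity then gives
\[
D(\mu_\beta\|U_n)+\beta\E_{\mu_\beta}\Phi(S_x)=-\log Z_\beta .
\]
So \eqref{eq:entropy-main} is equivalent to \eqref{eq:brsc-scaled}, and the whole task is the lower bound on $Z_\beta$.

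For $Z_\beta$ we use the replica idea. By Donsker--Varadhan in the other direction, for any law $\mu$ on good signings,
\[
-\log Z_\beta\le D(\mu\|U_n)+\beta\E_\mu\Phi(S_x).
\]
It therefore suffices to exhibit one law $\mu$ with entropy deficit $O(\tau)$ and $\E_\mu\Phi=O(\tau)$; this yields the factor $\max(1,\beta)$. The law will come from a randomized partial-coloring process. We run a fractional walk $x(t)\in[-1,1]^n$ whose increments are Gaussian and confined to a subspace chosen by the directional-variation signing theorem of \citep{GFL26}. That theorem is where the variance hypothesis $\nu\le\nu_0$ enters. It guarantees directions along which the second-order growth of $\Phi(S_{x(t)})$ is controlled by the commutative part. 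In the commutative case we have $\Phi(S)=\sum_j -\log(1-\lambda_j^2)$, and the increments decouple.

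We track $\Phi$ along the walk by Itô's formula. The gradient term is a martingale. The Hessian term is bounded using the Gaussian reciprocal estimate of \citep{AS26}, which controls $\E[(I-S^2)^{-1}]$-type quantities by the Frobenius mass $\tau$. The net effect is $\E\Phi(S_{x(T)})\lesssim \sum_i\Tr A_i^2=\tau$, provided $\max_i\Tr A_i^2\le q_1$ keeps each individual step from pushing $\|S\|$ to $1$. We then freeze coordinates as they reach $\pm1$ and iterate over $O(\log n)$ phases. Because the Frobenius mass of the live coordinates decays geometrically, the total entropy cost $\sum_i(\log2-h(\text{bias}_i))$ and the accumulated $\Phi$ both sum to $O(\tau)$. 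The final step rounds the remaining fractional coordinates with independent signs, which costs $O(\max_i\Tr A_i^2)$ per coordinate.

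The main obstacle is keeping $\|S_{x(t)}\|<1$ with high enough probability throughout the walk, since $\Phi$ blows up at the boundary. A bound of $O(\tau)$ in expectation is not enough; we need a barrier argument in which $\Phi$ itself serves as the potential. Our first attempt is to show that $e^{c\Phi(S_{x(t)})-Ct\tau}$ is a supermartingale. This is where the hypotheses on $\nu_0$ and $q_1$ must be tuned together with the \citep{AS26} estimate. We then condition on the good event and absorb the conditioning into the entropy term, which costs at most $\log(1/\Prob(\text{good}))=O(\tau)$.
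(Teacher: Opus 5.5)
Your reduction is sound: rescaling to $r=1$, the Gibbs variational identity, and the observation that it suffices to exhibit one law $\mu$ on $\{\|S_x\|<1\}$ with $D(\mu\|U_n)=O(\tau)$ and $\E_\mu\Phi(S_x)=O(\tau)$ form the same skeleton the paper uses. The gap is in producing that $\mu$. The walk is a program rather than a proof: its central step, that $e^{c\Phi(S_{x(t)})-Ct\tau}$ is a supermartingale (or anything else keeping $\|S_{x(t)}\|<1$), is left open. The cited tools also do not supply it.
\begin{itemize}
\item \citet{GFL26} enters only as an existence theorem (\cref{thm:tv-input}): a bounded-variation density supported in a symmetric convex body forces one signing into that body. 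It does not produce safe subspaces for Gaussian increments.
\item The \citet{AS26} estimate (\cref{lem:gaussian-input}) is an average under the specific tilted law $e^{-\mathcal U(X)}\,\dd\gamma$. It requires the \emph{full} variance bound $\|\sum_iA_i^2\|\le\eta$. It says nothing about the It\^o Hessian term along a walk whose time-marginals are not that law, and it is unavailable when only $\nu$ is small.
\end{itemize}
The last point is serious. For diagonal $A_i$ one has $\nu=0$ but arbitrarily large variance, and the theorem then contains Koml\'os with a count. A Gaussian walk keeping $\Phi=O(\tau)$ would therefore be a walk proof of Koml\'os, and ``the increments decouple'' gives no control of $\sum_j-\log(1-\lambda_j^2)$. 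The entropy bookkeeping also fails. The quantity $\sum_i(\log2-h(\mathrm{bias}_i))$ is the deficit of a product law, not of the terminal law of a subspace-confined walk. When $\tau\ll n$ you may lose only $O(\tau)$ of the $n\log 2$ nats, and phases that freeze constant fractions of coordinates do not obviously achieve that. Finally, ``conditioning on the good event costs $\log(1/\Prob(\text{good}))=O(\tau)$'' presupposes a lower bound of exactly the type you are proving.

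The missing idea is to separate existence from counting. The paper first proves a universal full-signing statement (\cref{prop:offdiag-existence}). It splits $A_i=D_i+R_i$ in an optimal basis and applies \cref{lem:fisher} to the off-diagonal parts $2R_i$. There the \citet{AS26} bound is legitimate, because $p_t$ \emph{is} the tilted Gaussian law. The diagonal parts are handled by an independent $\cos^2$ density sheared by $2\Gamma x$, and the conclusion comes from the Fisher-information form of the directional-variation theorem (\cref{prop:fisher-vertex}). The paper then amplifies by replicas.
\begin{itemize}
\item It packs $k=\lfloor b_*n/(2A_*\tau)\rfloor$ copies block-diagonally, with random masks $t_{j,i}$ and load-balancing permutations (\cref{lem:loads}).
\item It adds arrowhead blocks $\sqrt{a_*/\tau}\,F_i$ that encode $\Tr S^2\le\tau/a_*$.
\item Since a good $s$ exists for \emph{every} mask, the union bound gives $1\le 2^np^k$. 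Hence $p=U_n(G)\ge2^{-n/k}\ge e^{-h_*\tau}$, where $G=\{\|S_x\|\le\tfrac12,\ \Tr S_x^2\le\tau/a_*\}$.
\end{itemize}
The witness is then simply $\mu=U_n(\cdot\mid G)$. It has $D(\mu\|U_n)=\log(1/p)$, and on $G$ the deterministic bound $\Phi\le\tfrac43\Tr S^2\le\tfrac{4\tau}{3a_*}$ holds by \cref{lem:determinant}. No barrier dynamics are needed, and the case $\tau\le\tfrac12$ is handled by Jensen (\cref{lem:small-trace}).
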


Letting \(\beta\downarrow0\) in \eqref{eq:brsc-scaled} gives the
ordinary (unscaled) BSB inequality,
\begin{equation}\label{eq:ordinary-bsb}
\Prob_{x\sim U_n}\bigl(\|S_x\|<r\bigr)
\ge
\exp\biggl(-C_1\frac{\tau}{r^2}\biggr).
\end{equation}
Indeed, for every \(x\) with \(\|S_x\|<r\),
$\det\bigl(I-r^{-2}S_x^2\bigr)>0,$
and therefore $\det\bigl(I-r^{-2}S_x^2\bigr)^\beta
\longrightarrow1$ as $\beta\downarrow0$. Moreover, for \(x\) with \(\|S_x\|\ge r\)  the indicator
\(\mathbf 1_{\{\|S_x\|<r\}}\) vanishes. Hence, the integrand in
\eqref{eq:brsc-scaled} converges pointwise to
to \(\mathbf 1_{\{\|S_x\|<r\}}\). Since the expectation is over the
finite set \(\{-1,1\}^n\), the limit may be interchanged with the expectation.\\

In particular, from the BSB inequality \eqref{eq:ordinary-bsb} we obtain that here are at least $\#\{x:\|S_x\|<r\}\ge2^ne^{-C_1\frac\tau{r^2}}$ colorings with discrepancy $r$.

\vskip8pt
Theorem~\ref{thm:main} relies on two building blocks: a core lemma in our work on Matrix Spencer~\citep[Proposition~3.8]{AS26} and a key step in the recent breakthrough on the Koml\'os conjecture~\citep[Corollary~2.3]{GFL26}.\footnote{Indeed, while we had identified the BSB lemma as a core result worth proving, our attempts did not fully materialize unconditionally until the Koml\'os ingredient became available.} We will explain below how these ideas fit together.

\vspace*{-5pt}
\subsection{Applications of the BSB Theorem}
\vspace*{-5pt}
The BSB theorem delivers a \textbf{\emph{new, interlacing-free proof
of Kadison--Singer}}; it also recovers \emph{Matrix Spencer and Koml\'os as quick corollaries}---see Section~\ref{sec:applications}; it also yields almost immediate proofs of numerous other problems in discrepancy theory, which we relegate for now to Appendix~\ref{app:appl}  to avoid clutter. Each entry of Table~\ref{tab:implications} is obtained from Theorem~\ref{thm:main} by choosing the
coefficients $B_i$, often as block direct sums, and reading off the
determinant, counting, and entropy statements. 
\begin{table}[t]\footnotesize\centering
\caption{Applications of Theorem~\ref{thm:main}; all are proved in Section~\ref{sec:applications}.}\label{tab:implications}
\renewcommand{\arraystretch}{1.1}
\begin{tabular}{@{}>{\raggedright\arraybackslash}p{7.4cm}>{\raggedright\arraybackslash}p{8.4cm}@{}}
\toprule
Result & Note\\
\midrule
Kadison--Singer, partition scale $O(\sqrt\varepsilon)$ (Cor.~\ref{cor:ks}) &
First proof without interlacing polynomials; comes with a count and an
entropy law for the good partitions.\\
\addlinespace
Matrix Spencer, signings of norm $O(\sqrt n)$ (Cor.~\ref{cor:ms}) &
Exponentially many such signings; uses only the Gaussian
estimate of \citet{AS26}, not its conclusion.\\
\addlinespace
Koml\'os and matrix Koml\'os, $\disc(B)\le K\sqrt{q+\nu(B)}$
(Cors.~\ref{cor:komlos}, \ref{cor:matrix-komlos}) &
Second parameter necessary (Prop.~\ref{prop:star-komlos}); relies on
the signing theorem of \citet{GFL26}.\\
\addlinespace
Unbiased rounding with exact marginals, error $O(\sqrt{q+\nu})$
(\S\ref{linear-discrepancy-with-prescribed-marginals}) &
Explicit relative-entropy budget to the product law.\\
\addlinespace
Multicolor discrepancy and exact quotas, error $O(\sqrt{2q+\nu})$
(\S\ref{arbitrary-multicolor-probabilities}--\S\ref{exact-quotas-and-their-boundary}) &
Error independent of the number of colors.\\
\addlinespace
Fully dynamic discrepancy, error $O(\sqrt{q_t+\nu_t})$
(\S\ref{adaptive-insertions-and-deletions-with-recourse}) &
Adaptive insertions and deletions, logarithmic recourse.\\
\addlinespace
Prefix discrepancy, bound $O(\sqrt\nu+\sqrt q\log n)$
(\S\ref{selectable-order-with-a-contracting-variance}--\S\ref{prescribed-order-and-interval-constraints}) &
Selectable order via a variance-contracting lift; joint dyadic law for
prescribed orders.\\

\addlinespace
Sparse systems and common-sign budgets
(\S\ref{a-general-common-sign-budget}--\S\ref{ks-and-arbitrary-scalar-side-constraints-together}) &
One signing for arbitrary scalar and matrix constraints at once,
including Kadison--Singer with side constraints.\\
\addlinespace
Graph signing and spectral bisection, $\|A_s\|=O(\sqrt\Delta)$
(\S\ref{graph-adjacency-and-signed-degrees}--\S\ref{spectral-edge-bisection-with-additive-degree-control}) &
$O(1)$ signed degrees; half-selections approximating half the
Laplacian with additive degree error.\\
\bottomrule
\end{tabular}
\end{table}
The appendix also records exact counterexamples that show  limitations of the
method BSB. 

\vspace*{-5pt}
\subsection{Related work}
\vspace*{-3pt}
Spencer's theorem~\citep{Spencer85} established constant multiples of $\sqrt n$ in the scalar square discrepancy problem. Banaszczyk~\citeyearpar{Ban98} connected vector balancing with convex geometry
and Gaussian measure. Smirnov and Vershynin~\citeyearpar{SV26} developed a
Fisher-information approach controlling discarded steps in an online
random walk; the directional-variation theorem of
Guo--Fang--Lu~\citeyearpar{GFL26} supplies the full-signing input used here.
On the matrix side, Akbas and Sra~\citeyearpar{AS26} developed Gaussian
determinant and reciprocal estimates in their proof of Matrix Spencer.

Our focus is the determinant partition.
The discrepancy inequality $\disc(B)\le C_E\sqrt{v+q}$, where
$\disc(B):=\min_{s\in\{-1,1\}^n}\|S_s\|$, appears as an intermediate theorem; it is also equivalent to BSB up to
universal constants. In Section~\ref{sec:applications}, BSB gives Matrix
Spencer and trace-controlled positive semidefinite partitions; its
off-diagonal refinement gives Koml\'os and the matrix extension above.
In the
rank-one case, the latter give the Kadison--Singer partition scale
$O(\sqrt\varepsilon)$, in the discrepancy formulation of
Weaver~\citeyearpar{Weaver04} resolved by Marcus, Spielman, and
Srivastava~\citeyearpar{MSS15}. The applications recover universal scales,
without asserting the sharp constants from that work.
Likewise, the Koml\'os application relies on the general geometric
theorem of \citet{GFL26}, rather than assuming the cube-signing
conclusion. It is a corollary of the stronger master BSB theorem, but is not an independent proof of the source's Koml\'os result.

\vskip6pt
\noindent\textbf{Note (added: 16 Sep' 26)}: While completing our paper, we noticed the remarkable work of~\citet{EJ26} that provides a deterministic ``interlacing free'' constructive algorithm for rank-1 Kadison--Singer. Their rank-1 algorithm actually admits a quick generalization to the general-rank Kadison--Singer problem; we note these details and also simple statements about the hardness of approximation beyond a factor of $2$ in Appendix~\ref{app:ks}.

\vspace*{-8pt}
\section{Preliminaries} \label{sec:preliminaries}
We start with notation conventions, key definitions, and some basic but useful lemmas. 
We use \(\|\cdot\|\) for the operator norm. Moreover, we write
\(\bstar:=\max(1,\beta)\). We denote the uniform distribution  on $\{-1,1\}^n$ by $U_n$. For probability distributions \(\mu\) and \(\nu\)
on a finite set, we write $D(\mu\|\nu)
=
\sum_x \mu(x)\log\frac{\mu(x)}{\nu(x)}$
for their relative entropy, with the usual convention that
\(D(\mu\|\nu)=+\infty\) if \(\mu\) is not absolutely continuous
with respect to \(\nu\). The following \emph{spectral log-barrier} and the corresponding Gibbs measure will be central to our arguments:
\begin{equation}\label{eq:barrier}
 \Phi(H)=
 \begin{cases}
 -\log\det(I-H^2),&\|H\|<1,\\
 +\infty,&\|H\|\ge1,
 \end{cases} \qquad
 W_\beta(H)=e^{-\beta\Phi(H)},\quad \beta>0.
\end{equation}
In particular, $W_\beta$ is zero on the spectral boundary as well as
outside the ball. Moreover, we denote the associated partition function by, $Z_\beta(H_x)=\E_{x\sim U_n}W_\beta(H_x).$ We start with an elementary lemma that notes some useful properties of the \emph{Gibbs weight} $W_\beta$. 
\begin{lemma}\label{lem:determinant}
For every $\beta>0$, $W_\beta$ is continuous and takes values in
$[0,1]$. Moreover,
\begin{equation}\label{eq:elementary-weight}
 W_\beta(H)\ge (1-\Tr H^2)_+^\beta.
\end{equation}
If $\|H\|\le r<1$, then
\begin{equation}\label{eq:energy-barrier}
 \Tr H^2\le \Phi(H)\le\frac{\Tr H^2}{1-r^2}.
\end{equation}
\end{lemma}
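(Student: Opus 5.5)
The plan is to diagonalize $H$ and reduce every claim to scalar inequalities in the eigenvalues $\lambda_1,\dots,\lambda_d$ of $H$. Since $H$ is symmetric, $\det(I-H^2)=\prod_j(1-\lambda_j^2)$, $\Tr H^2=\sum_j\lambda_j^2$, and $\|H\|=\max_j|\lambda_j|$. Hence on the open ball $\{\|H\|<1\}$ we have
\begin{equation*}
W_\beta(H)=\prod_j(1-\lambda_j^2)^\beta ,
\end{equation*}
and $W_\beta(H)=0$ otherwise.

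\textbf{Range and continuity.} Each factor $1-\lambda_j^2$ lies in $(0,1]$ on the ball, so $W_\beta\in(0,1]$ there and $W_\beta=0$ outside. Together these give $W_\beta\in[0,1]$.

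For continuity, I would note that $H\mapsto\det(I-H^2)$ is a polynomial in the entries of $H$, hence continuous. Define $g(H):=\prod_j(1-\lambda_j^2)_+$, which is a continuous symmetric function of the eigenvalues. Since eigenvalues depend continuously on $H$, $g$ is continuous in $H$. Then $W_\beta=g^\beta$ everywhere. On the ball this is immediate. Outside the ball some $|\lambda_j|\ge1$, which forces a factor $(1-\lambda_j^2)_+=0$, so $g=0=W_\beta$. Since $t\mapsto t^\beta$ is continuous on $[0,\infty)$ for $\beta>0$, $W_\beta$ is continuous. The only delicate point is the boundary $\|H\|=1$, and it is handled because the factor $(1-\lambda_j^2)_+$ tends to $0$ there.

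\textbf{Inequality \eqref{eq:elementary-weight}.} If $\Tr H^2\ge1$, the right-hand side is $0$ and there is nothing to prove. If $\Tr H^2<1$, then $\|H\|^2\le\Tr H^2<1$, so $H$ lies in the ball. I would then use the Weierstrass product inequality $\prod_j(1-a_j)\ge1-\sum_j a_j$ for $a_j\in[0,1]$, which follows by induction on the number of factors. Applying it with $a_j=\lambda_j^2$ and raising to the power $\beta$ gives $W_\beta(H)\ge(1-\Tr H^2)^\beta$.

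\textbf{Inequality \eqref{eq:energy-barrier}.} Write $\Phi(H)=\sum_j-\log(1-\lambda_j^2)$ and set $t=\lambda_j^2\in[0,r^2]$, so it suffices to bound $-\log(1-t)$ on this range.
\begin{itemize}
\item For the lower bound, $-\log(1-t)\ge t$ follows from $\log(1-t)\le -t$.
\item For the upper bound, $-\log(1-t)=\int_0^t\frac{ds}{1-s}\le\frac{t}{1-t}\le\frac{t}{1-r^2}$.
\end{itemize}
Summing over $j$ gives both inequalities. I do not expect any step to be a genuine obstacle; the only point needing care is the continuity argument at the boundary, handled above.
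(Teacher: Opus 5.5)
Your proof is correct and follows the paper's argument: you diagonalize, apply $\prod_j(1-a_j)\ge 1-\sum_j a_j$ with $a_j=\lambda_j^2$, and sum the scalar bounds $u\le -\log(1-u)\le u/(1-r^2)$ over the eigenvalues. Your continuity argument via $g(H)=\prod_j(1-\lambda_j^2)_+$, and your check that $\Tr H^2<1$ forces $\|H\|<1$, usefully fill in steps the paper only calls elementary.
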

\begin{proof}
Continuity of \(W_\beta\) on \(\Sym_d(\mathbb R)\) is elementary, and since $\Phi(H) \ge 0$, it follows that $W_\beta(H) \in [0,1]$. We now prove~\eqref{eq:elementary-weight}. Suppose first that
\(\sum_{j=1}^d \lambda_j^2=\Tr H^2<1\).  Set \(a_j:=\lambda_j^2\). 
Using the elementary inequality 
 $\prod_{j=1}^d(1-a_j)\ge1-\sum_{j=1}^d a_j$, we obtain $\det(I-H^2)=\prod\nolimits_{j=1}^d(1-\lambda_j^2)\ge1-\nlsum_{j=1}^d\lambda_j^2=1-\Tr H^2$, and therefore also $W_\beta(H)=\det(I-H^2)^\beta\ge(1-\Tr H^2)^\beta$. For  \(\Tr H^2\ge1\), we have $(1-\Tr H^2)_+^\beta=0,$ so \eqref{eq:elementary-weight} follows in
both cases. Next, for~\eqref{eq:energy-barrier}, suppose that \(\|H\|\le r<1\). Then \(\lambda_j^2\le r^2\) for every \(j\). Using 
the elementary inequality $u
\le
-\log(1-u)
\le
\frac{u}{1-r^2}$ with \(u=\lambda_j^2\) and summing over \(j\) inequality~\eqref{eq:energy-barrier} is immediate. 
\end{proof}

Next, using Lemma~\ref{lem:determinant} we show that for matrix series with small variance, a simple argument suffices to obtain lower bounds on the  associated partition function. 
\begin{lemma}\label{lem:small-trace}
Let $A_1,\dots,A_n\in\Sym_d(\R)$. Let $w_i = \Tr(A_i^2)$ and write $\tau= \sum_{i=1}^nw_i$. Suppose that $0\le\tau\le1/2$. Then we have ,
$Z_\beta = \E_{x\sim U_n}W_\beta(\sum_{i=1}^nx_iA_i) \ge e^{-2\bstar\tau}$.
\end{lemma}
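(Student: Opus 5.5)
We lower-bound the partition function by averaging the pointwise bound \eqref{eq:elementary-weight} of Lemma~\ref{lem:determinant} and then applying Jensen's inequality. Write $H_x=\sum_i x_iA_i$. Since the signs are independent and uniform, the cross terms cancel in expectation:
\[
\E_x\Tr H_x^2=\sum_{i,j}\E[x_ix_j]\Tr(A_iA_j)=\sum_i\Tr A_i^2=\tau .
\]
The difficulty is that $(1-\Tr H_x^2)_+^\beta$ has no convexity in $x$ for a direct Jensen step. So we first pass to logarithms on a good event, or use a cruder route that is adequate when $\tau\le1/2$.

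The approach that looks cleanest is to treat the cases $\beta\ge1$ and $\beta<1$ together through $\bstar$. First, the map $t\mapsto (1-t)_+^{\beta}$ is convex on $[0,\infty)$ when $\beta\ge1$. Hence Jensen gives
\[
Z_\beta\ \ge\ \E_x(1-\Tr H_x^2)_+^\beta\ \ge\ (1-\tau)^\beta .
\]
Using $\tau\le1/2$ and the elementary inequality $\log(1-u)\ge -2u$ for $u\in[0,1/2]$, we get $(1-\tau)^\beta\ge e^{-2\beta\tau}$, which is the claim with $\bstar=\beta$.

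For $\beta<1$ the function $(1-t)_+^\beta$ is concave near $0$, so Jensen fails in this direction. Instead we use monotonicity in $\beta$: since $W_\beta(H)=\det(I-H^2)^\beta\in[0,1]$, the map $\beta\mapsto W_\beta(H)$ is nonincreasing. Therefore $W_\beta\ge W_1$ pointwise, and so $Z_\beta\ge Z_1\ge e^{-2\tau}=e^{-2\bstar\tau}$. Note that on the set $\|H\|\ge1$ both weights are zero, so the monotonicity holds there trivially.

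The only step needing care is the convexity claim. For $\beta\ge1$, $t\mapsto(1-t)^\beta$ is convex on $[0,1]$, and extending it by $0$ for $t\ge1$ preserves convexity, because the function is nonincreasing, nonnegative, and reaches $0$ with derivative $-\beta\cdot 0^{\beta-1}\le 0$. Since $\Tr H_x^2$ is a genuine random variable with mean $\tau$, Jensen applies to it directly. No variance hypothesis is needed here: the smallness $\tau\le1/2$ alone keeps us inside the ball in the averaged sense. I expect no real obstacle; the main point is to remember that Jensen is applied to the scalar variable $\Tr H_x^2$, not to the matrix.
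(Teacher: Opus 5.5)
Your proof is correct and follows the paper's argument essentially line for line. For $\beta\ge1$ you apply Jensen to the convex scalar function $u\mapsto(1-u)_+^\beta$ of $\Tr H_x^2$, whose mean is $\tau$, and then use $1-\tau\ge e^{-2\tau}$; for $0<\beta<1$ you use the pointwise bound $W_\beta\ge W_1$. The early aside about missing convexity in $x$ and passing to logarithms on a good event is a detour you never use, and can be deleted.
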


\begin{proof}
Let $S_x=\sum_{i=1}^n x_iA_i$ and note that $\E_{x\sim U_n}\Tr S_x^2=\tau$.
Suppose first that \(\beta\ge1\). By Lemma~\ref{lem:determinant}, $W_\beta(S_x)
\ge
\bigl(1-\Tr S_x^2\bigr)_+^\beta$.
The function $f(u)=(1-u)_+^\beta$
is convex for \(\beta\ge1\), so by Jensen we get
\[
Z_\beta = \E W_\beta(S_x) \ge \E\bigl(1-\Tr S_x^2\bigr)_+^\beta \ge \left(1-\E\Tr S_x^2\right)_+^\beta= (1-\tau)^\beta,
\]
where in the last step we used \(\tau\le1/2<1\). It remains to bound \(1-\tau\) from below. For
\(0\le\tau\le1/2\), $-\log(1-\tau)
=
\int_0^\tau\frac{du}{1-u}
\le
2\tau,$
and therefore $1-\tau\ge e^{-2\tau}.$
Hence, $Z_\beta
\ge
(1-\tau)^\beta
\ge
e^{-2\beta\tau}$.

Finally, let \(0<\beta<1\). Since \(\Phi\ge0\), we have that $W_\beta(H)
=
e^{-\beta\Phi(H)}
\ge
e^{-\Phi(H)}
=
W_1(H)$, and therefore also $Z_\beta\ge Z_1\ge e^{-2\tau}$. Combining both cases yields then $Z_\beta\ge e^{-2\bstar\tau}$.
\end{proof}

The determinant-weighted partition function $Z_\beta$ admits a useful
variational interpretation.  A probability distrubtion \(\mu\) on the Boolean hypercube may concentrate on signings for which the spectral barrier $\Phi$ is small,
but doing so generally incurs an entropy cost relative to the uniform
distribution \(U_n\).  The following elementary Gibbs variational identity
shows that \(-\log Z_\beta\) is captures exactly the optimal tradeoff
between these two quantities.
\begin{lemma}[Gibbs variational identity]\label{lem:gibbs}
Let $A_1,\dots,A_n\in\Sym_d(\R)$ and let  $S_x=\sum_{i=1}^n x_iA_i$.
Suppose that $Z_\beta = \E_{x\sim U_n}W_\beta(S_x) >0$, and consider the probability density $\mu_\beta(x)=2^{-n}e^{-\beta\Phi(S_x)}/Z_\beta$ on $\{-1,1\}^n$. Then
\begin{equation}\label{eq:gibbs}
-\log Z_\beta
=
\inf_\mu
\left\{
D(\mu\|U_n)
+
\beta\,\E_\mu\Phi(S_x)
\right\},
\end{equation}
where the infimum is over probability measures \(\mu\) supported on $\{x\in\{-1,1\}^n:\|S_x\|<1\}.$
Moreover, the infimum is attained by \(\mu_\beta\).
\end{lemma}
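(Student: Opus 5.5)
The plan is the standard Donsker--Varadhan computation on a finite set, written as an explicit decomposition of the objective. Let $G:=\{x\in\{-1,1\}^n:\|S_x\|<1\}$. On $G$ we have $\Phi(S_x)<\infty$ and $W_\beta(S_x)=e^{-\beta\Phi(S_x)}>0$. Off $G$, $W_\beta(S_x)=0$, so $\mu_\beta$ vanishes there. Hence $\mu_\beta$ is a genuine probability measure supported on $G$, and it normalizes because $\sum_x 2^{-n}W_\beta(S_x)=Z_\beta>0$. In particular $G\neq\emptyset$, so the feasible class in \eqref{eq:gibbs} is nonempty.

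The key step is the following identity. Fix any probability measure $\mu$ supported on $G$. For $x$ in the support of $\mu$ (hence $x\in G$) we can write
\[
\log\frac{\mu(x)}{2^{-n}}+\beta\Phi(S_x)
=\log\frac{\mu(x)}{2^{-n}e^{-\beta\Phi(S_x)}}
=\log\frac{\mu(x)}{\mu_\beta(x)}-\log Z_\beta .
\]
Averaging this over $\mu$ gives
\[
D(\mu\|U_n)+\beta\,\E_\mu\Phi(S_x)=D(\mu\|\mu_\beta)-\log Z_\beta .
\]
All terms are finite, since the sum is finite, $\Phi$ is finite on $G$, and $\mu_\beta>0$ on $G\supseteq\operatorname{supp}\mu$. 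This also confirms that $\mu\ll\mu_\beta$.

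From the identity the conclusion follows at once. By Gibbs' inequality (Jensen applied to $-\log$), $D(\mu\|\mu_\beta)\ge0$, with equality exactly when $\mu=\mu_\beta$. Therefore every feasible $\mu$ has objective value at least $-\log Z_\beta$. Since $\mu_\beta$ is itself feasible, being supported on $G$, it attains the value $-\log Z_\beta$, so the infimum equals $-\log Z_\beta$ and is a minimum.

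No step is a real obstacle. The only care needed is bookkeeping with $+\infty$: we restrict to $\mu$ supported on $G$ so that $\beta\,\E_\mu\Phi$ is finite, and we check that $\mu_\beta$ has full support on $G$, so that $D(\mu\|\mu_\beta)$ is finite and the decomposition above is legitimate.
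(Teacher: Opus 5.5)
Your proof is correct and follows the paper's argument: both rewrite the objective as $D(\mu\|\mu_\beta)-\log Z_\beta$, conclude from $D(\mu\|\mu_\beta)\ge 0$, and note that equality holds at $\mu=\mu_\beta$. Your explicit check that $\mu_\beta$ is positive exactly on $\{x:\|S_x\|<1\}$, which makes every term finite, is a welcome bit of bookkeeping that the paper leaves implicit.
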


\begin{proof}
First note that, 
\begin{align*}
D(\mu\|\mu_\beta)
&=
\sum_x
\mu(x)\log\frac{\mu(x)}{\mu_\beta(x)} =
\sum_x
\mu(x)\log\frac{\mu(x)}{2^{-n}}
+
\beta\sum_x\mu(x)\Phi(S_x)
+
\log Z_\beta \\
&=
D(\mu\|U_n)
+
\beta\,\E_\mu\Phi(S_x)
+
\log Z_\beta.
\end{align*}
Rearranging gives $D(\mu\|U_n)+\beta\,\E_\mu\Phi(S_x)=-\log Z_\beta+D(\mu\|\mu_\beta)\ge-\log Z_\beta$. Thus the right-hand side of \eqref{eq:gibbs} is bounded below by
\(-\log Z_\beta^{\mathrm B}\).
Finally, taking \(\mu=\mu_\beta\) gives
\(D(\mu_\beta\|\mu_\beta)=0\), and hence equality.  Therefore the
infimum is attained by \(\mu_\beta\), proving
\eqref{eq:gibbs}.
\end{proof}

\subsection{Curvature Bounds}

In our arguments we will need a second,  auxiliary version of the log-determinant
barrier. For $|x|<1$, define $\ell(x)=-\log(1-x^2)$ and $u(x)=32\ell(x)-28x^2=4x^2+32\sum_{k\ge2}x^{2k}/k$.
We consider the spectral potential
\begin{equation}\label{eq:def-spectral-u}
\mathcal U(H) = 
 \begin{cases}
 \Tr u(H),&\|H\|<1,\\
 +\infty,&\|H\|\ge1,
 \end{cases}
\end{equation}
which  is nonnegative, even, and convex. It is related to the barrier function $\Phi$ via $\mathcal U(H)
=
32\Phi(H)-28\Tr H^2$.

\begin{lemma}\label{lem:gaussian-input}
There are universal $\eta,C_R>0$ such that the following holds.
Let $X=\sum_i g_iA_i$, where $g_i$ are independent standard
Gaussians, $A_i\in\Sym_d(\R)$, and
$\|\sum_iA_i^2\|\le\eta$. Consider the tilted Gaussian measure,
\begin{equation}\label{eq:gaussian-tilt}
 \dd\mu(g)=
 \frac{e^{-\mathcal U(X(g))}}{\E_\gamma e^{-\mathcal U(X)}}\,
 \dd\gamma(g).
\end{equation}
Then, for every  symmetric matrix $H$ and $\rho\in\{-1,1\}$ we have that,
\begin{equation}\label{eq:reciprocal-input}
 \E_\mu\Tr\!\left[(I+\rho X)^{-1}H(I+\rho X)^{-1}H\right]
 \le C_R\|H\|_{\Fro}^2.
\end{equation}
\end{lemma}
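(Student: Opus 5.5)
Under the tilted Gaussian law $\mu$, the matrix $X$ has norm less than $1$ almost surely, because $\mathcal U=+\infty$ outside the open unit ball. The resolvent $R_\rho:=(I+\rho X)^{-1}$ is therefore well defined and positive definite. The plan is to reduce \eqref{eq:reciprocal-input} to a bound on the negative moments of $I\pm X$, and to obtain those moments from the barrier built into $\mathcal U$. The paper attributes this estimate to \citep[Proposition~3.8]{AS26}; the route I would follow to reprove it is as follows.

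\textbf{Step 1 (reduction to a scalar moment).} For $R\succ0$ and symmetric $H$, write $\Tr[RHRH]=\|R^{1/2}HR^{1/2}\|_{\Fro}^2$. Applying $\|ABC\|_{\Fro}\le\|A\|\,\|B\|_{\Fro}\|C\|$ gives $\Tr[RHRH]\le\|R\|^2\|H\|_{\Fro}^2$. It then suffices to show
\[
\E_\mu\|(I+\rho X)^{-1}\|^2=\E_\mu\bigl(1-\rho\lambda_{\max/\min}(X)\bigr)^{-2}\le C_R ,
\]
which in turn follows from $\E_\mu\bigl(1-\|X\|\bigr)^{-2}\le C_R$. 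By symmetry under $g\mapsto-g$, both values of $\rho$ are handled at once.

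\textbf{Step 2 (the barrier kills the boundary).} Near the boundary $\mathcal U(X)=32\Phi(X)-28\Tr X^2$ behaves like $-32\sum_j\log(1-\lambda_j^2)$. Hence
\[
e^{-\mathcal U(X)}\le e^{28\Tr X^2}\prod_j(1-\lambda_j^2)^{32}\le e^{28\Tr X^2}\,(1-\|X\|)^{32}.
\]
The weighted integrand $(1-\|X\|)^{-2}e^{-\mathcal U(X)}$ is therefore bounded by $e^{28\Tr X^2}(1-\|X\|)^{30}$, which is at most $e^{28\Tr X^2}$. The problem becomes a ratio estimate,
\[
\E_\mu(1-\|X\|)^{-2}\le\frac{\E_\gamma\bigl[e^{28\Tr X^2}\mathbf 1_{\{\|X\|<1\}}\bigr]}{\E_\gamma e^{-\mathcal U(X)}}.
\]
The factor $e^{28\Tr X^2}$ must be controlled, since $\Tr X^2$ can be as large as $d\|X\|^2$. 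To avoid paying a dimension factor, I would not discard it. Instead I would keep the finer bound $e^{-\mathcal U}\le\prod_j(1-\lambda_j^2)^{4}\cdot e^{-4\Tr X^2}\cdots$, using that $u(x)\ge 4x^2+16x^4$. This shows that only the eigenvalues near $\pm1$ matter, and the factor $(1-\lambda_{\rm top})^{30}$ absorbs everything.

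\textbf{Step 3 (main obstacle: dimension-free comparison of numerator and denominator).} The denominator $\E_\gamma e^{-\mathcal U(X)}$ can be exponentially small in $\Tr\sum_iA_i^2$. The numerator must be shown to carry the same factor. I would therefore not compare them directly, but work with the log-concave measure $\mu$ itself. Since $\mathcal U$ is convex and the Gaussian is log-concave, $\mu$ is log-concave, in fact more log-concave than $\gamma$. With $\|\sum A_i^2\|\le\eta$, Gaussian concentration of the Lipschitz functions $\lambda_{\max}(X)$ and $\lambda_{\min}(X)$ (Lipschitz constant $\sqrt\eta$) survives under $\mu$ by Caffarelli/Bakry--Émery. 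The matrix-series bound $\E\|X\|\lesssim\sqrt{\eta\log d}$ is not dimension-free, so instead I would use the barrier directly: an integration by parts (Stein identity under $\mu$) applied to $\Tr[(I-X)^{-1}]$ controls $\E_\mu\Tr(I-X)^{-2}$ by $\eta\,\E_\mu\Tr(I-X)^{-3}$-type terms. These are dominated by the drift $-\nabla\mathcal U$, which pushes away from the boundary with strength $32$. This is the step I expect to be the main difficulty, and the choice of constants $32$ and $28$ in $u$ is presumably tuned for it. Closing this self-consistent moment inequality for small $\eta$ yields $\E_\mu(1-\|X\|)^{-2}\le C_R$, and Step 1 finishes the proof.
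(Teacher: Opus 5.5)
\textbf{There is a genuine gap, in Step~1, and it cannot be repaired within your plan.} The inequality $\Tr[RHRH]\le\|R\|^2\|H\|_{\Fro}^2$ is true. But the target it reduces to, $\E_\mu\|(I+\rho X)^{-1}\|^2\le C_R$ (and a fortiori $\E_\mu(1-\|X\|)^{-2}\le C_R$), is false with a constant independent of $d$.

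Take $A_j=\sqrt\eta\,e_je_j^{\mathsf T}$ for $j=1,\dots,d$, so that $\|\sum_jA_j^2\|=\eta$. Then $X=\diag(x_1,\dots,x_d)$ with $x_j=\sqrt\eta\,g_j$, and $\mathcal U(X)=\sum_ju(x_j)$. So $\mu$ is a product measure, and each $x_j$ has density proportional to $e^{-x^2/(2\eta)}(1-x^2)^{32}e^{28x^2}$ on $(-1,1)$. Hence $\Prob_\mu(x_j>1-\delta)\asymp_\eta\delta^{33}$. It follows that $\max_jx_j\to1$ as $d\to\infty$, and $\E_\mu\|(I-X)^{-1}\|^2\gtrsim_\eta d^{2/33}$.

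In the same example the lemma itself is easy. Put $r_j=(1+\rho x_j)^{-1}$. Independence gives $\E_\mu\Tr[RHRH]=\sum_{j,k}H_{jk}^2\,\E_\mu[r_jr_k]\le\E_\mu[r_1^2]\,\|H\|_{\Fro}^2$. Here $\E_\mu[r_1^2]$ is finite and independent of $d$, because the marginal density vanishes to order $32$ at $\pm1$.

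Your operator-norm bound charges all of the Frobenius mass of $H$ to the single worst eigendirection of $X$. Under $\mu$, some eigenvalue genuinely approaches $\pm1$ in high dimension. The lemma survives only because, on average, $H$ cannot place its mass on that direction.

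For the same reason, the quantity you propose to control in Step~3 cannot help. On $\{\|X\|<1\}$ one has $\Tr(I-X)^{-2}\ge d/4$, so no bound on $\E_\mu\Tr(I-X)^{-2}$ can give a universal constant. The dimension factor you run into in Step~2 is a symptom of the same loss.

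\textbf{What is needed instead.} You need a bound on the averaged quadratic form, that is, on the operator norm of $\E_\mu[R_\rho\otimes R_\rho]$ acting on $\R^{d^2}$. A bound on $\E_\mu\|R_\rho\otimes R_\rho\|=\E_\mu\|R_\rho\|^2$ is the wrong object.

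This is exactly the form in which the paper obtains the lemma; it does not reprove anything. It applies \citet[Proposition~3.8]{AS26} with $q=2$, $\alpha=32$, $b=4$, so that the scalar potential there equals $u$ and the tilted law equals $\mu$. It also takes $\sigma=\tau=\rho$ and the constant field $F\equiv H$. This gives $\E_\mu h^{\mathsf T}(R_\rho\otimes R_\rho)h\le C_R\|h\|_2^2$ for $h=\vecop(H)$. The trace form then follows from $\vecop(H)^{\mathsf T}(A\otimes B)\vecop(H)=\Tr(BHA^{\mathsf T}H^{\mathsf T})$.

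If you want a self-contained proof instead, any integration-by-parts or Stein argument must keep $H$ inside the functional being estimated. For example, integrate by parts directly against $g\mapsto\Tr[R_\rho HR_\rho H]$. Passing to norms of the resolvent, or to traces of its powers, loses exactly the information the lemma depends on.
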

\begin{proof}
We deduce the statement from
\citet[Proposition 3.8]{AS26}.
In the notation of that proposition, choose $q= 2$, $\alpha=32$ and $b=4$. For this choice, the scalar potential appearing in \cite{AS26} is $4x^2+32\sum_{k\ge2}x^{2k}/k=32\bigl(-\log(1-x^2)\bigr)-28x^2=u(x)$.
Hence also the corresponding spectral potential is precisely $\Tr u(X)=\mathcal U(X),$
and the normalized tilted Gaussian measure in \cite[Proposition 3.8]{AS26} coincides with the measure \(\mu\) defined in \eqref{eq:gaussian-tilt}. Moreover, the parameter requirements of
\cite[Proposition 3.8]{AS26} are satisfied for this choice: $\alpha\ge b$, $\alpha\ge32$, and $\alpha b\ge128$.
Since \(q=2\), its small-variance assumption
\(q^2\eta\le c_{\mathrm P}\) is ensured by taking $\eta\le \frac{c_{\mathrm P}}{4}$.
Now let \(\rho\in\{-1,1\}\) and  \(H\in\Sym_d(\R)\), and set \(h=\vecop(H)\). Applying
\cite[Proposition 3.8]{AS26} with $\sigma=\tau=\rho$
and with the constant matrix-valued field $F(g)\equiv H$ gives then,
\[
\E_\mu\!\left[
 h^{\mathsf T}
 \Bigl(
   (I+\rho X)^{-1}\otimes (I+\rho X)^{-1}
 \Bigr)
 h
\right]
\le
C_R\|h\|_2^2
\]
for a universal constant \(C_R>0\).
Finally, using  that \(H\) and \(I+\rho X\) are symmetric and  the
standard vectorization identity $\vecop(H)^{\mathsf T}
(A\otimes B)\vecop(H)
=
\Tr\!\left(BHA^{\mathsf T}H^{\mathsf T}\right)$, we obtain $\E_\mu\bigl[h^{\mathsf T}\bigl((I+\rho X)^{-1}\otimes(I+\rho X)^{-1}\bigr)h\bigr]=\E_\mu\Tr\bigl[(I+\rho X)^{-1}H(I+\rho X)^{-1}H\bigr]$, which concludes the proof.
\end{proof}

The following lemma provides an estimate on the average curvature of the potential \(\mathcal U\).
\begin{lemma}[Curvature bound]\label{lem:hessian}
Under the hypotheses of Lemma~\ref{lem:gaussian-input}, for every symmetric $H$ we have, 
\begin{equation}\label{eq:hessian}
 \E_\mu D^2\mathcal U(X)[H,H]\le 64C_R\|H\|_{\Fro}^2.
\end{equation}
\end{lemma}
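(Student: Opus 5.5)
The plan is to compute the Hessian of $\mathcal U$ explicitly, split it into the two factors $\det(I\pm X)$, and apply Lemma~\ref{lem:gaussian-input} to each factor.

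\textbf{Step 1 (where $\mu$ lives).} Under $\mu$ we have $\|X\|<1$ almost surely, because the density $e^{-\mathcal U(X)}$ vanishes wherever $\|X\|\ge1$. So it suffices to work on the open set $\{\|X\|<1\}$. There $\mathcal U$ is smooth and $I\pm X$ are both positive definite.

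\textbf{Step 2 (rewriting $\mathcal U$).} Using $\mathcal U(X)=32\Phi(X)-28\Tr X^2$ together with $\det(I-X^2)=\det(I-X)\det(I+X)$, we get
\[
\mathcal U(X)=-32\log\det(I-X)-32\log\det(I+X)-28\Tr X^2 .
\]

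\textbf{Step 3 (differentiating).} The standard second-derivative formula for the log-determinant gives, for each $\rho\in\{-1,1\}$,
\[
D^2\bigl[-\log\det(I+\rho X)\bigr][H,H]=\Tr\!\left[(I+\rho X)^{-1}H(I+\rho X)^{-1}H\right].
\]
To see this, note that the first derivative is $-\rho\Tr[(I+\rho X)^{-1}H]$. Differentiating once more, using $D(M^{-1})[K]=-M^{-1}KM^{-1}$ with $K=\rho H$ and $\rho^2=1$, yields the formula. For the quadratic term we have $D^2\Tr X^2[H,H]=2\Tr H^2\ge0$.

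\textbf{Step 4 (bounding).} Since the $-28\Tr X^2$ term contributes $-56\Tr H^2\le 0$, we can drop it. Thus pointwise on $\{\|X\|<1\}$,
\[
D^2\mathcal U(X)[H,H]\le 32\sum_{\rho\in\{\pm1\}}\Tr\!\left[(I+\rho X)^{-1}H(I+\rho X)^{-1}H\right].
\]
Each summand is nonnegative: with $M=(I+\rho X)^{-1/2}\succ0$, it equals $\|MHM\|_{\Fro}^2$.

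\textbf{Step 5 (taking expectations).} Taking $\E_\mu$ and applying \eqref{eq:reciprocal-input} of Lemma~\ref{lem:gaussian-input} for $\rho=1$ and for $\rho=-1$ bounds the right-hand side by $32\cdot2\cdot C_R\|H\|_{\Fro}^2=64C_R\|H\|_{\Fro}^2$. This is \eqref{eq:hessian}.

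I expect no real obstacle. The only care needed is the a.s.\ restriction to $\|X\|<1$ in Step~1, which makes both the Hessian formula and the inverses well defined.
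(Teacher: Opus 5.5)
Your proposal is correct and is essentially the paper's proof. Both split $\Phi(X)=-\log\det(I-X)-\log\det(I+X)$, compute $D^2\mathcal U(X)[H,H]=32\sum_{\rho=\pm1}\Tr\bigl[(I+\rho X)^{-1}H(I+\rho X)^{-1}H\bigr]-56\|H\|_{\Fro}^2$, and apply Lemma~\ref{lem:gaussian-input} once for each $\rho$. The only difference is cosmetic: you drop the $-56\|H\|_{\Fro}^2$ term pointwise before taking expectations, while the paper drops it at the end via $64C_R-56\le 64C_R$.
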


\begin{proof}
Recall that, $\mathcal U(X)
=
32\Phi(X)-28\Tr(X^2)$ and $\Phi(X)
=
-\log\det(I-X^2)
=
-\log\det(I-X)-\log\det(I+X).$
Since  \(\mu\) is supported on
\(\{X:\|X\|<1\}\), the matrices \(I\pm X\) are invertible
\(\mu\)-almost surely.  For \(\rho\in\{-1,1\}\), write $R_\rho=(I+\rho X)^{-1}$.
Consider $f_\rho(t)
=
-\log\det\bigl(I+\rho(X+tH)\bigr)$.  Using $\frac{\dd}{\dd t}\log\det A(t)
=
\Tr\!\left(A(t)^{-1}A'(t)\right)$,
we obtain $f_\rho'(t)=-\rho\,\Tr\bigl[(I+\rho(X+tH))^{-1}H\bigr]$.
Moreover, using $\frac{\dd}{\dd t}A(t)^{-1}
=
-A(t)^{-1}A'(t)A(t)^{-1},$
gives $f_\rho''(0)
=
\Tr(R_\rho H R_\rho H)$.
Therefore, we obtain $D^2\Phi(X)[H,H]
=
\sum_{\rho=\pm1}
\Tr(R_\rho H R_\rho H).$
Moreover, since  $D^2\Tr(X^2)[H,H] = 2\|H\|_{\Fro}^2$, we have
\[
D^2\mathcal U(X)[H,H]
=
32\sum_{\rho=\pm1}
\Tr(R_\rho H R_\rho H)
-
56\|H\|_{\Fro}^2.
\]
Now Lemma~\ref{lem:gaussian-input} gives $\E_\mu\Tr(R_\rho HR_\rho H)\le C_R\|H\|_{\Fro}^2$, and therefore $\E_\mu D^2\mathcal U(X)[H,H]\le(64C_R-56)\|H\|_{\Fro}^2\le64C_R\|H\|_{\Fro}^2$, which concludes the proof.
\end{proof}

\section{Proof of the Boolean Matrix Small-Ball Theorem \ref{thm:main}}
\subsection{A Functional Geometric Criterion for Full Signings}
\label{sec:signing}

For \(u\in\R^N\), we denote with $\partial_u \varphi = \langle u,\nabla\varphi\rangle$ the directional derivative of a smooth test function \(\varphi\).
If \(p\in L^1(\R^N)\), its distributional derivative in the direction
\(u\) is defined by $\langle D_u p,\varphi\rangle=-\int_{\R^N} p(x)\,\partial_u\varphi(x)\,\dd x$ for $\varphi\in C_c^1(\R^N)$.
We denote with \(BV(\R^N)\)  the space of functions of bounded
variation, namely, functions \(p\in L^1(\R^N)\) whose distributional
gradient \(Dp\) is a finite \(\R^N\)-valued Radon measure.  Equivalently,
each distributional directional derivative \(D_u p\) is a finite signed
Radon measure. Let \(p\in BV(\R^N)\), we denote its total variation by $V_u(p)
:=
|D_u p|(\R^N)$.
Moreover, we write \(W^{1,1}(\R^N)\) for the Sobolev space of integrable functions
whose first weak derivatives are also integrable.
Thus, for \(p\in W^{1,1}(\R^N)\) we have that \(\nabla p\in L^1(\R^N;\R^N)\), and the
distributional derivative \(D_u p\) is absolutely continuous with
respect to Lebesgue measure, with density $\partial_u p=\langle u,\nabla p\rangle$, so that $p$ has total variation $V_u(p)
= \int_{\R^N}
|\partial_u p(x)|\,\dd x$.
In particular, we have that $W^{1,1}(\R^N)\subset BV(\R^N)$. Moreover, for  $K\subseteq \R^N$ we let \(BV_K(\R^N)\) denote the functions in
\(BV(\R^N)\) that vanish Lebesgue-a.e. on \(\R^N\setminus K\).
Similarly, we use \(W^{1,1}_K(\R^N)\) to denote those functions in
\(W^{1,1}(\R^N)\) that vanish Lebesgue-a.e. outside \(K\).

The following result of \citet[Corollary 2.3]{GFL26}  provides a geometric criterion for symmetric convex sets $K$ under which every family of sufficiently short vectors admits a signing whose signed sum lies in  \(K\).
It may be viewed as a functional analogue of the classical convex geometric  vector-balancing principles, in the spirit of \cite{Gluskin1989Extremal,Ban98}.

\begin{theorem}[Vector balancing under bounded directional variation]\label{thm:tv-input}
Let $K\subset\R^N$ be a  bounded open  convex set satisfying $K=-K$. Suppose that for some $\kappa>0$ there exists  a probability density  $p\in BV_K(\R^N)$  satisfying 
 $V_u(p)\le\kappa\|u\|_2$ for
all $u\in\R^N$.  Then every family $a_1,\ldots,a_m\in\R^N$ with
$\kappa\max_i\|a_i\|_2\le1/3$ admits a coloring $x\in\{-1,1\}^m$ such that
$\sum_i x_ia_i\in K$.
\end{theorem}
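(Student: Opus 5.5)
Since this statement is \citet[Corollary~2.3]{GFL26}, the proof here will simply check that our hypotheses match theirs. Those are: $K$ bounded, open, convex and symmetric, and $p$ a probability density in $BV_K(\R^N)$ with $V_u(p)\le\kappa\|u\|_2$. We then quote the conclusion. For the reader's orientation, and in case an independent argument is wanted, I would reconstruct the proof along the following lines. I have not fully checked this sketch.

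\emph{Step 1 (reductions).} First replace $p$ by $\tfrac12\bigl(p(y)+p(-y)\bigr)$. This is still a probability density vanishing a.e.\ outside $K=-K$, and it still satisfies $V_u\le\kappa\|u\|_2$, since $V_u$ is a seminorm that is invariant under $y\mapsto -y$. So we may assume $p$ is even. Next, mollify $p$ by a small Gaussian and shrink $K$ slightly. This lets us work with $p\in W^{1,1}$, so that the key translation estimate
\[
\|p(\cdot-a)-p\|_{L^1}\le V_a(p)\le\kappa\|a\|_2
\]
holds in the classical sense. Openness and boundedness of $K$ absorb the approximation error.

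\emph{Step 2 (overlap criterion).} Let $s=\sum_i x_ia_i$. If the densities $p$ and $p(\cdot-2s)$ overlap, i.e.\ $\int\min\bigl(p(y),p(y-2s)\bigr)\,\dd y>0$, then there is a point $y\in K$ with $2s-y\in K$ (using evenness of $p$). By convexity, $s=\tfrac12\bigl(y+(2s-y)\bigr)\in K$. So it suffices to produce signs with $\|p-p(\cdot-2s)\|_{L^1}<2$. The naive bound $\kappa\|2s\|$ is useless for long sums, so the signs must be chosen so that the translations cancel.

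\emph{Step 3 (induction).} This step carries the real content. I would run a Banaszczyk-type induction on $m$. Given $(K,p)$ and a vector $a$ with $\kappa\|a\|\le 1/3$, I would build a new pair $(K',p')$ with three properties: $K'$ is symmetric convex with $K'\subseteq (K-a)\cup(K+a)$; $p'$ is a probability density supported in $K'$; and $V_u(p')\le\kappa\|u\|_2$, with no loss in $\kappa$. Iterating over $a_m,a_{m-1},\dots$ and finally applying Step 2 with $s=0$ would give the coloring. The natural candidate is an averaged density such as $\tfrac12\bigl(p(\cdot-a)+p(\cdot+a)\bigr)$, restricted to a convex symmetric subregion of the union. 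The restriction costs mass of order $\|p(\cdot-a)-p(\cdot+a)\|_{L^1}\le 2\kappa\|a\|\le 2/3$, and this is where a threshold like $1/3$ enters.

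The main obstacle is preserving \emph{both} convexity of the support and the directional-variation bound under this restriction and renormalisation. Cutting $p$ off creates jump parts in $D_up$, and renormalisation inflates $\kappa$. What I would try first is to restrict to the slab $\{y:|\langle y,a\rangle|\le c\}$ intersected with $K\pm a$, and to control the boundary contribution via the coarea formula. If that inflation cannot be avoided, I would fall back on the functional potential argument of \citet{GFL26} and cite their result directly, which is what the paper does.
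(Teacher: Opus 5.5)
Your proposal matches the paper: the paper gives no proof of this statement and simply imports it as \citet[Corollary~2.3]{GFL26}, so quoting that result after checking the hypotheses is the entire argument. Your supplementary sketch is not needed and is not an independent proof. Step~3, which you leave open, carries all of the content. Step~1 is also off as written: Gaussian mollification makes the density positive everywhere, so it no longer vanishes outside $K$, and the smoothing is unnecessary anyway, since $\|p(\cdot-a)-p\|_{L^1}\le V_a(p)$ already holds for $BV$ functions.
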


\subsubsection{A Fisher Information Criterion}

Recall that for a probability density \(p\in W^{1,1}(\R^N)\) its Fisher
information matrix, whenever finite, is defined by
\[
J(p)
=
\int_{\{p>0\}}
\frac{\nabla p(x)\nabla p(x)^{\mathsf T}}{p(x)}\,\dd x.
\]
The following result below establishes a useful integrality criterion for symmetric convex sets based on Fisher information matrices. 
\begin{proposition}[Fisher information criterion]\label{prop:fisher-vertex}
Let $K\subset\R^N$ be an open convex set satisfying $K=-K$.
Suppose there exists a probability density function
\(p\in W^{1,1}_K(\R^N)\) and a symmetric positive-definite matrix \(M\) such that 
$J(p)\preceq M$.
   Then, every family  $a_1,\ldots,a_n\in\R^N$  with  $\max_i a_i^{\mathsf T}Ma_i<1/9$  admits a coloring $x\in\{-1,1\}^n$ such that $\sum_i x_i a_i\in K$.
\end{proposition}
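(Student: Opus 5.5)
The plan is to reduce the statement to Theorem~\ref{thm:tv-input}. There are three steps: first convert the Fisher-information bound into a directional-variation bound via Cauchy--Schwarz, then whiten by $M^{1/2}$ so the bound is in the Euclidean norm, and finally truncate so the convex set is bounded.

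\emph{Step 1 (Fisher $\Rightarrow$ variation).} For $p\in W^{1,1}(\R^N)$ we have $\nabla p=0$ a.e.\ on $\{p=0\}$, a standard property of Sobolev functions. Hence for every $u$, Cauchy--Schwarz gives
\[
V_u(p)=\int_{\{p>0\}}|\partial_u p|
\le\Bigl(\int_{\{p>0\}}\tfrac{(\partial_u p)^2}{p}\Bigr)^{1/2}\Bigl(\int p\Bigr)^{1/2}
=\sqrt{u^{\mathsf T}J(p)u}\le\sqrt{u^{\mathsf T}Mu}.
\]

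\emph{Step 2 (whitening).} Set $T=M^{1/2}$ and $K'=TK$, which is open, convex and symmetric. Define $q(y)=\det(T)^{-1}p(T^{-1}y)$. This is a probability density in $W^{1,1}_{K'}$. Its gradient is $\nabla q(y)=\det(T)^{-1}T^{-1}\nabla p(T^{-1}y)$, so $J(q)=T^{-1}J(p)T^{-1}\preceq I$. Step 1 then gives $V_u(q)\le\|u\|_2$. Put $b_i=Ta_i$, so that $\|b_i\|_2^2=a_i^{\mathsf T}Ma_i<1/9$. By finiteness of the family there is $\delta>0$ with $(1+\delta)\max_i\|b_i\|_2\le1/3$. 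A signing with $\sum_ix_ib_i\in K'$ gives $\sum_ix_ia_i=T^{-1}\sum_ix_ib_i\in K$.

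\emph{Step 3 (boundedness, the main technical point).} Theorem~\ref{thm:tv-input} requires $K$ bounded, and $K'$ need not be. Fix a smooth cutoff $\chi$ with $0\le\chi\le1$, equal to $1$ on $B_1$, supported in $B_2$, and with $|\nabla\chi|\le2$. Let $\chi_R=\chi(\cdot/R)$ and $q_R=\chi_Rq/\int\chi_Rq$. Then $q_R$ is a probability density in $W^{1,1}$ supported in $K_R:=K'\cap B_{2R}$, which is a bounded, open, convex, symmetric set. Its variation satisfies
\[
V_u(q_R)\le\frac{V_u(q)+(2/R)\|u\|_2}{\int\chi_Rq}\le\frac{1+2/R}{\int_{B_R}q}\,\|u\|_2.
\]
As $R\to\infty$ we have $\int_{B_R}q\to1$, so for $R$ large $V_u(q_R)\le(1+\delta)\|u\|_2$. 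Theorem~\ref{thm:tv-input} with $\kappa=1+\delta$ applies to the $b_i$ and yields a signing with $\sum_ix_ib_i\in K_R\subseteq K'$. By Step 2, $\sum_ix_ia_i\in K$.

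The only delicate points are the a.e.\ vanishing of $\nabla p$ on $\{p=0\}$ in Step 1 and the slack $\delta$ absorbed by the truncation in Step 3. Both are handled as above, and the strict inequality $<1/9$ is exactly what makes the truncation possible.
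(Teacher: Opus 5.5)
Your proposal is correct and follows essentially the same route as the paper. Both whiten by $M^{1/2}$ to get $J\preceq I$, use Cauchy--Schwarz to turn this into $V_u\le\|u\|_2$, and then truncate smoothly to $M^{1/2}K\cap B(0,2R)$ with the variation constant tending to $1$, so the strict inequality $<1/9$ leaves room to apply Theorem~\ref{thm:tv-input}. You also state explicitly that $\nabla p=0$ a.e.\ on $\{p=0\}$; the paper uses this standard Sobolev fact without comment when it restricts the variation integral to $\{p^A>0\}$.
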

\begin{proof}
Let $A=M^{1/2}$. We apply the linear change of variables $y=Ax$  and denote by \(p^A\) the pushforward of \(p\). Thus $p^A(y) =\frac{1}{\det A}\,p(A^{-1}y)$.
Note that, since \(p\) is a probability density, so is \(p^A\), and since
\(p\) vanishes a.e. outside \(K\), the density \(p^A\) vanishes
a.e. outside \(AK\).

Next, we compute how the Fisher information transforms under this change of variables. Using that
$\nabla p^A(y)
=
\frac{1}{\det A}\,
A^{-1}\nabla p(A^{-1}y)$
 we  obtain,
\begin{align*}
J(p^A)
&=
\int_{\{p^A>0\}}
\frac{\nabla p^A(y)\nabla p^A(y)^{\mathsf T}}
     {p^A(y)}\,\dd y \\
&=
A^{-1}
\biggl[
\int_{\{p>0\}}
\frac{\nabla p(x)\nabla p(x)^{\mathsf T}}
     {p(x)}\,\dd x
\biggr]
A^{-1} \\
&=
A^{-1}J(p)A^{-1}.
\end{align*}
Moreover, since \(J(p)\preceq M=A^2\), it follows that $J(p^A)\preceq A^{-1}MA^{-1}=I$.
This Fisher-information bound translates to a bound of the total variation of $p^A$ for all directions $u\in\R^N$.
Indeed, for  \(u\in\R^N\), Cauchy--Schwarz gives
\begin{align}
V_u(p^A)
&=
\int_{\R^N}
\bigl|\langle u,\nabla p^A(y)\rangle\bigr|\,\dd y \notag\\
&=
\int_{\{p^A>0\}}
\frac{\bigl|\langle u,\nabla p^A(y)\rangle\bigr|}
     {\sqrt{p^A(y)}}
\sqrt{p^A(y)}\,\dd y \notag\\
&\le
\biggl(
\int_{\{p^A>0\}}
\frac{\langle u,\nabla p^A(y)\rangle^2}
     {p^A(y)}\,\dd y
\biggr)^{1/2}
\biggl(
\int_{\R^N}p^A(y)\,\dd y
\biggr)^{1/2} \notag\\
&=
\bigl(u^{\mathsf T}J(p^A)u\bigr)^{1/2}
\le
\|u\|_2.
\label{eq:variation-whitened}
\end{align}

We next identify the vectors to which
Theorem~\ref{thm:tv-input} will be applied. To this end, for each  
\(a_i\in\R^N\), define $b_i:=Aa_i$ for $i=1,\ldots,n.$
Since \(A^2=M\), we have $\|b_i\|_2^2
=
a_i^{\mathsf T}A^2a_i
=
a_i^{\mathsf T}Ma_i.$ Therefore, since $\max_{1\le i\le n}a_i^{\mathsf T}Ma_i<\frac19$, we obtain that $\max_{1\le i\le n}\|b_i\|_2<\frac13.$
Moreover, note that the set \(AK\) is open symmetric convex set,   but it need not be
bounded.
We therefore use  the following truncation argument. 
Choose a smooth function \(\eta:[0,\infty)\to[0,1]\) such that $\eta(t)=1$ for $0\le t\le1$ and
$\eta(t)=0$ for $t\ge2$,
and set $\chi_R(x) = \eta\bigl(\|x\|_2/R\bigr)$.
Then \(\chi_R\in C_c^\infty(\R^N)\) is a smooth radial function with $\chi_R=1$ on $B(0,R)$, $\chi_R=0$ on $\R^N\setminus B(0,2R)$, and $\|\nabla\chi_R\|_\infty\le c/R$, where $c:= \|\eta'\|_\infty$.
Moreover, define $m_R=\int_{\R^N}\chi_R(y)p^A(y)\,\dd y$ and $p_R=\chi_Rp^A/m_R$, and let $K_R=AK\cap B(0,2R)$.
Since \(0\le\chi_R\le1\) and \(\chi_R(y)\to1\) pointwise as
\(R\to\infty\), dominated convergence shows $m_R\longrightarrow1$.
In particular, \(m_R>0\) for all sufficiently large \(R\).
For all such \(R\), \(K_R\) is a nonempty bounded open symmetric convex set. Moreover, \(p_R\) is a probability density that
vanishes a.e. outside \(K_R\). Now, since \(p^A\in W^{1,1}(\R^N)\) and
\(\chi_R\) is smooth, the product rule gives $\partial_u(\chi_R p^A)=\chi_R\,\partial_u p^A+p^A\,\partial_u\chi_R$, and therefore
\begin{align*}
V_u(p_R)
&\le
\frac{1}{m_R}
\biggl(
\int_{\R^N}
\chi_R\bigl|\partial_u p^A\bigr|\,\dd y
+
\int_{\R^N}
p^A\bigl|\partial_u\chi_R\bigr|\,\dd y
\biggr) \\
&\le
\frac{1}{m_R}
\biggl(
V_u(p^A)
+
\frac{c}{R}\|u\|_2
\int_{\R^N}p^A(y)\,\dd y
\biggr).
\end{align*}
Moreover, using \eqref{eq:variation-whitened} we have that $V_u(p_R)\le\kappa_R\|u\|_2$ with $\kappa_R=(1+c/R)/m_R$, and $\kappa_R\longrightarrow1$ as \(R\to\infty\). 
We may therefore choose  \(R\) sufficiently large such that $\kappa_R\max_i\|b_i\|_2<\frac13.$

Finally, we may now apply Theorem~\ref{thm:tv-input} and obtain that there are signs  
\(x_1,\ldots,x_n\in\{-1,1\}\) such that $\sum_{i=1}^n x_i b_i=A\sum_{i=1}^n x_i a_i\in K_R\subset AK$. But, since \(A\) is invertible, it follows that \(\sum_{i=1}^n x_i a_i\in K\),
which concludes the proof.
\end{proof}

\subsubsection{Constructing a Density with Controlled Fisher Information}
Let \(A_1,\dots,A_n\in\Sym_d(\R)\), and write $S_x=\sum_{i=1}^n x_iA_i$ and $\Omega=\{x\in\R^n:\|S_x\|<1\}$.
The goal of this section is to construct a probability density function on $ \Omega$  whose Fisher information matrix can be controlled well enough to apply the criterion of  Proposition~\ref{prop:fisher-vertex} and   hence show the existence of a coloring $x\in\{-1,1\}^n$ with $\|S_x\|<1$. To this end, for $t>0$ we consider 
\begin{equation}\label{eq:coefficient-density}
 p_t(x)=\frac{(2\pi t^2)^{-n/2}}{\mathcal Z_t}
       e^{-\|x\|_2^2/(2t^2)-\mathcal U(S_x)},
\end{equation}
where $\mathcal{U}$ is defined as in \eqref{eq:def-spectral-u}. The following lemma below establishes precisely the aforementioned  control on the Fisher information matrix $J(p_t)$ of $p_t$ in terms of the Gram matrix $[G_{ij}=\Tr(A_iA_j)]_{i,j=1}^n$.
\begin{lemma}[Fisher information bound of the barrier density]\label{lem:fisher}
Let $p_t$ be defined as in \eqref{eq:coefficient-density} and let $G=[G_{ij}=\Tr(A_iA_j)]_{i,j=1}^n$ be the Gram matrix associated with the coefficient matrices $A_1, \dots, A_n\in\Sym_d(\R)$.
Then,   \(p_t\in W^{1,1}(\R^n)\), and \(p_t\) has finite  Fisher
information matrix. Moreover, for every \(u\in\R^n\) we have,
\begin{equation}\label{eq:fisher-exact}
u^{\mathsf T}J(p_t)u
=
t^{-2}\|u\|_2^2
+
\E_{p_t}
D^2\mathcal U(S_x)[S_u,S_u].
\end{equation}
If in addition $\|\sum_{i=1}^n A_i^2\|\le t^{-2}\eta$, 
then
\begin{equation}\label{eq:fisher-bound}
J(p_t)
\preceq
t^{-2}I+64C_RG,
\end{equation}
where \(\eta\) and \(C_R\) are the universal constants from
Lemma~\ref{lem:gaussian-input}.
\end{lemma}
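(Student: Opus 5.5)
The plan is to write $p_t=e^{-V}/\mathcal Z$ on $\Omega$, with $V(x)=\|x\|_2^2/(2t^2)+\mathcal U(S_x)$, and to set $p_t=0$ off $\Omega$. Since $\mathcal U(S_x)\to+\infty$ as $x\to\partial\Omega$, the function $e^{-\mathcal U(S_x)}$ vanishes continuously at the boundary. Thus $p_t$ is continuous on $\R^n$ and smooth on $\Omega$, with $\nabla p_t=-p_t\nabla V$ there.

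\textbf{Step 1 (regularity).} We need $p_t\in W^{1,1}$ with no singular part on $\partial\Omega$. It suffices that $p_t|\nabla V|$ is integrable and that the boundary contributes nothing. For the latter we use $e^{-\mathcal U}\le \det(I-S_x^2)^{32}$, which together with its gradient vanishes at $\partial\Omega$. Integrating by parts against $\varphi\in C_c^1$ over the sets $\{\Phi(S_x)<M\}$ and letting $M\to\infty$ shows that the weak gradient equals the classical one. Integrability holds because $|\nabla_x\mathcal U(S_x)|$ grows at most like a power of $\|(I\pm S_x)^{-1}\|$, and this is killed by the factor $\det(I-S_x^2)^{32}$; the Gaussian factor controls large $x$ (note $\Omega$ may be unbounded when the $A_i$ have a common kernel direction). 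This step is routine but fiddly, and I expect it to be the main technical obstacle.

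\textbf{Step 2 (exact identity).} We have $u^{\mathsf T}J(p_t)u=\E_{p_t}\langle u,\nabla V\rangle^2$. Integrating by parts once more, in the same way as in Step 1 with vanishing boundary terms, gives
\[
\E_{p_t}\langle u,\nabla V\rangle^2=\E_{p_t}\partial_u^2V .
\]
Indeed, $\int (\partial_uV)^2e^{-V}=-\int\partial_uV\,\partial_u(e^{-V})=\int\partial_u^2V\,e^{-V}$. Since $x\mapsto S_x$ is linear, $\partial_u S_x=S_u$. Hence
\[
\partial_u^2V=t^{-2}\|u\|^2+D^2\mathcal U(S_x)[S_u,S_u],
\]
which is \eqref{eq:fisher-exact}. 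Finiteness of $J$ follows from the bounds in Step 1.

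\textbf{Step 3 (bound).} Substitute $x=tg$ with $g$ standard Gaussian. Then
\[
p_t(x)\,\dd x\propto e^{-\mathcal U(\sum_i g_i(tA_i))}\,\dd\gamma(g),
\]
which is exactly the tilted measure \eqref{eq:gaussian-tilt} with coefficients $tA_i$ and $X=tS_g=S_x$. The hypothesis $\|\sum_i (tA_i)^2\|=t^2\|\sum_i A_i^2\|\le\eta$ is exactly the condition of Lemma~\ref{lem:gaussian-input}. Lemma~\ref{lem:hessian} with $H=S_u$ therefore gives
\[
\E_{p_t}D^2\mathcal U(S_x)[S_u,S_u]\le 64C_R\|S_u\|_{\Fro}^2=64C_R\,u^{\mathsf T}Gu,
\]
since $\|S_u\|_{\Fro}^2=\sum_{i,j}u_iu_j\Tr(A_iA_j)$. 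Combining this with Step 2 yields \eqref{eq:fisher-bound}.
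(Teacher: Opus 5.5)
Your proposal is correct and follows the paper's proof essentially step for step. The exact identity comes from the same integration by parts, $\E_{p_t}(\partial_u V)^2=\E_{p_t}\partial_u^2V$; the paper phrases it as $\int_{\R^n}\partial_u^2p_t\,\dd x=0$. The bound comes from the same substitution $x=tg$ followed by Lemma~\ref{lem:hessian} with coefficients $tA_i$ and $H=S_u$.

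The only difference is the regularity step. The paper avoids your boundary terms on sublevel sets of $\Phi(S_x)$ by noting that $e^{-\mathcal U(H)}=\det(I-H^2)^{32}e^{28\Tr H^2}$ on $\{\|H\|<1\}$ (and $0$ outside) is $C^2$ across $\{\|H\|=1\}$, so $p_t\in W^{2,1}(\R^n)$ directly. This formula also shows that your bound $e^{-\mathcal U}\le\det(I-S_x^2)^{32}$ needs an extra factor $e^{28d}$, which is harmless.
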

\begin{proof}
We divide the proof into three steps.

\paragraph{Step 1: Regularity of the density.}

Define $\Psi(H):=e^{-\mathcal U(H)}$ for $H\in\Sym_d(\R)$,
with the convention \(e^{-\infty}=0\). Thus \(\Psi(H)=0\) whenever
\(\|H\|\ge1\), and $\Psi(H)
=
\det(I-H^2)^{32}e^{28\Tr H^2}$ on the open unit operator norm ball.

We first verify that \(\Psi\) is \(C^2\) across the
boundary \(\{H:\|H\|=1\}\). To this end, let $g(H)=\det(I-H^2)$.
Then \(g\) is a smooth function of the matrix entries with $D(g^{32})[H]=32g^{31}Dg[H]$ and $D^2(g^{32})[H,H]=32\cdot31\,g^{30}Dg[H]^2+32g^{31}D^2g[H,H]$.
Since \(g\), its first two derivatives, and
\(e^{28\Tr H^2}\) together with its first two derivatives are bounded
on the compact set $\{H\in\Sym_d(\R):\|H\|\le1\},$
it follows that \(\Psi\), \(D\Psi\), and \(D^2\Psi\) all tend to zero
as \(H\) approaches the boundary \(\{H:\|H\|=1\}\) from within the unit   operator norm ball, which shows that \(\Psi\in C^2(\Sym_d(\R))\).

Moreover, the function \(V:x\mapsto\Psi(S_x)\) is  \(C^2\)
on \(\R^n\), since the map $x\longmapsto S_x=\sum_i x_iA_i$
is linear. 
Now, since the function $V$ and its first two derivatives are bounded on
\(\R^n\), $p_t(x)
=
\frac{(2\pi t^2)^{-n/2}}{\mathcal Z_t}
e^{-\|x\|_2^2/(2t^2)}\Psi(S_x)$
belongs to \(C^2(\R^n)\) and all of its derivatives of order at most two are dominated by an
integrable function of the form $C(1+\|x\|_2^2)e^{-\|x\|_2^2/(2t^2)}.$ In particular,
$p_t\in W^{2,1}(\R^n)\subset W^{1,1}(\R^n)$.

Finally, we verify next that \(p_t\) has finite Fisher information.  To this end, note that $\partial_u\log p_t(x)=-t^{-2}\langle x,u\rangle-D\mathcal U(S_x)[S_u]$.
Let $m(H)=1-\|H\|.$ Since $D\mathcal U(H)[Y]=32D\Phi(H)[Y]-56\Tr (HY)$, we have that $|D\mathcal U(H)[Y]|\le 64\sqrt d\,m(H)^{-1}\|Y\|_{\Fro}+56\sqrt d\,\|Y\|_{\Fro}\le C_d\,m(H)^{-1}\|Y\|_{\Fro}$,
where \(C_d=64 \sqrt{d}\).
Moreover, using $\det(I-H^2)\le1-\|H\|^2\le2m(H)$ we have $e^{-\mathcal U(H)}=\det(I-H^2)^{32}e^{28\Tr H^2}\le C_d\,m(H)^{32}$.
Therefore, for every  \(u\) we obtain that, 
\begin{equation}\label{eq:lem-fisher-integrable}
    p_t(x)\bigl(\partial_u\log p_t(x)\bigr)^2
\le
C_{A,d,t,u}\,
e^{-\|x\|_2^2/(2t^2)}
\bigl(1+\|x\|_2^2+m(S_x)^{30}\bigr)
\mathbf 1_{\Omega}(x).
\end{equation}
The right-hand side of \eqref{eq:lem-fisher-integrable} is integrable and hence $u^{\mathsf T}J(p_t)u=\int_{\{p_t>0\}}p_t(x)\bigl(\partial_u\log p_t(x)\bigr)^2\,\dd x<\infty$
for every \(u\), which shows that \(J(p_t)\) is finite.

\paragraph{Step 2: The Fisher information identity.}
By Step~1, \(p_t\in C^2(\R^n)\) and $p_t,\ \partial_u p_t,\ \partial_u^2p_t\in L^1(\R^n).$
We next show that the integral of the second directional derivative
of \(p_t\) over \(\R^n\) vanishes. To this end, choose
\(\chi\in C_c^\infty(\R^n)\) with $0\le \chi\le1$, $\chi=1$ on $B(0,1)$, and $\chi=0$ outside $B(0,2)$,
and set $\chi_R(x)=\chi(x/R).$
Then $\|\nabla\chi_R\|_\infty\le\|\nabla\chi\|_\infty/R.$
Since \(\chi_R\partial_u p_t\) has compact support, integration by
parts gives $\int_{\R^n}\chi_R\,\partial_u^2p_t\,\dd x=-\int_{\R^n}\partial_u\chi_R\,\partial_u p_t\,\dd x$.
Hence
\begin{equation}\label{eq:int-parts}
    \biggl|
\int_{\R^n}
\chi_R\,\partial_u^2p_t\,\dd x
\biggr|
\le
\frac{\|\nabla\chi\|_\infty\|u\|_2}{R}
\int_{\R^n}|\partial_u p_t(x)|\,\dd x.
\end{equation}
Because \(\partial_u p_t\in L^1(\R^n)\), the right-hand side of \eqref{eq:int-parts} tends
to zero as \(R\to\infty\).  On the other hand,
\(\partial_u^2p_t\in L^1(\R^n)\) and
\(\chi_R\to1\) pointwise, so dominated convergence yields
\begin{equation}\label{eq:lem-fish-second}
    \int_{\R^n}\partial_u^2p_t(x)\,\dd x=0.
\end{equation}

Now, using $\partial_u p_t=p_t\,\partial_u\log p_t,$
we have that $\partial_u^2p_t=p_t\bigl(\partial_u\log p_t\bigr)^2+p_t\,\partial_u^2\log p_t$.
Integrating over \(\Omega\) and using \eqref{eq:lem-fish-second} we obtain
\begin{equation}\label{eq:lem-fish-fourth}
    \int_{\Omega}
p_t(x)\bigl(\partial_u\log p_t(x)\bigr)^2\,\dd x
=
-\int_{\Omega}
p_t(x)\partial_u^2\log p_t(x)\,\dd x.
\end{equation}
Moreover,
\begin{equation}\label{eq:lem-fish-third}
    \int_{\Omega}
p_t(x)\bigl(\partial_u\log p_t(x)\bigr)^2\,\dd x
=
\int_{\Omega}
\frac{\langle u,\nabla p_t(x)\rangle^2}{p_t(x)}\,\dd x
=
u^{\mathsf T}J(p_t)u.
\end{equation}
Now, recall that for \(x\in\Omega\), $\log p_t(x)=\mathrm{const}-\|x\|_2^2/(2t^2)-\mathcal U(S_x)$.
Using that $\partial_u^2\,\|x\|_2^2/(2t^2)=t^{-2}\|u\|_2^2$ and $\partial_u^2\mathcal U(S_x)=D^2\mathcal U(S_x)[S_u,S_u]$,
we obtain
\begin{equation}\label{eq:log-hessian-pt}
-\partial_u^2\log p_t(x)
=
t^{-2}\|u\|_2^2
+
D^2\mathcal U(S_x)[S_u,S_u].
\end{equation}

Moreover, using \eqref{eq:lem-fish-fourth} and \eqref{eq:lem-fish-third}, we obtain from
 \eqref{eq:log-hessian-pt},
\begin{align*}
u^{\mathsf T}J(p_t)u
&=
\int_{\Omega}
p_t(x)
\Bigl[
t^{-2}\|u\|_2^2
+
D^2\mathcal U(S_x)[S_u,S_u]
\Bigr]
\,\dd x \\
&=
t^{-2}\|u\|_2^2
\int_{\Omega}p_t(x)\,\dd x
+
\int_{\Omega}
p_t(x)
D^2\mathcal U(S_x)[S_u,S_u]
\,\dd x\\
&= 
t^{-2}\|u\|_2^2
+
\E_{p_t}
D^2\mathcal U(S_x)[S_u,S_u],
\end{align*}
which proves \eqref{eq:fisher-exact}.

\paragraph{Step 3: Bounding the Fisher information matrix.}
Assume now that $\Bigl\|\sum_{i=1}^n A_i^2\Bigr\|\le t^{-2}\eta.$ Consider the change of variables $x=tg$, $g\in\R^n$, so that \(\dd x=t^n\,\dd g\) and $(2\pi t^2)^{-n/2}e^{-\|x\|_2^2/(2t^2)}\,\dd x=(2\pi)^{-n/2}e^{-\|g\|_2^2/2}\,\dd g=\dd\gamma(g)$,
where \(\gamma\) denotes the standard Gaussian measure on \(\R^n\).

Moreover, we let $M_i:=tA_i$ so that $S_{tg}
=
\sum_{i=1}^n tg_iA_i
=
\sum_{i=1}^n g_iM_i$. Now, note that the normalizing constant of $p_t$ can be expressed as $\mathcal Z_t=\E_{g\sim\gamma}\exp\bigl\{-\mathcal U\bigl(\sum_{i=1}^n g_iM_i\bigr)\bigr\}$,
so that under the change of variables \(x=tg\), the probability
measure with density \(p_t\) is given by
\begin{equation}\label{eq:lem-fish-law}
    \dd\mu(g)
=
\frac{
\exp\bigl\{-\mathcal U(X(g))\bigr\}
}{
\E_\gamma\exp\bigl\{-\mathcal U(X)\bigr\}
}
\,\dd\gamma(g),
\qquad
X(g):=\sum_{i=1}^n g_iM_i.
\end{equation}
Therefore, the law of \(S_x\) under \(p_t\) is exactly the tilted Gaussian
law from \eqref{eq:lem-fish-law} with coefficient matrices $M_i=tA_i$.
Finally, since $\|\sum_{i=1}^nM_i^2\|=t^2\|\sum_{i=1}^nA_i^2\|\le\eta$, we may apply 
Lemma~\ref{lem:hessian} to obtain $\E_{p_t}D^2\mathcal U(S_x)[S_u,S_u]\le 64 C_R\|S_u\|_{\Fro}^2$. Moreover, by the definition of the Gram matrix \(G\), $\|S_u\|_{\Fro}^2=\Tr\bigl(\sum_i u_iA_i\bigr)^2=\sum_{i,j}u_iu_j\Tr(A_iA_j)=u^{\mathsf T}Gu$.
Combining this with \eqref{eq:fisher-exact}, we obtain
\[
u^{\mathsf T}J(p_t)u
\le
t^{-2}\|u\|_2^2
+
64 C_R\,u^{\mathsf T}Gu
=
u^{\mathsf T}\bigl(t^{-2}I+64C_RG\bigr)u.
\]
Since this holds for every \(u\in\R^n\), $J(p_t)\preceq t^{-2}I+64C_RG$,
which is \eqref{eq:fisher-bound}.
\end{proof}

\subsubsection{ Warm-Up: A Variance--Frobenius Coloring Criterion}
We first illustrate how the preceding Fisher-information framework 
produces a full coloring in a simple setting.  Here we work with the
usual matrix variance $v=\Bigl\|\sum_{i=1}^n A_i^2\Bigr\|$, 
and the maximal coefficient energy $q=\max_i\Tr(A_i^2).$
More specifically, a bound on \(v\)
ensures that the Fisher-information estimate $J(p_t)\preceq t^{-2}I+LG$
is available, and since $\bigl(t^{-2}I+LG\bigr)_{ii}
=
t^{-2}+L\Tr(A_i^2),$
a control on  $q=\max_i\Tr(A_i^2)$ guarantees that these diagonal entries are below the threshold \(1/9\) as required by the Fisher-information signing criterion of  \Cref{prop:fisher-vertex}.  Later we apply the same mechanism only to the off-diagonal remainders of the matrices, which leads to the stronger parameter \(\nu\) appearing in the main theorem.
\begin{theorem}[Variance--Frobenius coloring criterion]
\label{thm:existence}
There exist universal constants \(v_E,q_E>0\) such that every
symmetric family \(A_1,\ldots,A_n\in\Sym_d(\R)\) satisfying
\[
v:=\Bigl\|\sum_{i=1}^n A_i^2\Bigr\|\le v_E,
\qquad
q:=\max_{1\le i\le n}\Tr(A_i^2)\le q_E
\]
admits a full coloring \(x\in\{-1,1\}^n\) for which $\Bigl\|\sum_{i=1}^n x_iA_i\Bigr\|<1.$
Consequently, there exists a universal constant \(C_E>0\) such that
\begin{equation}\label{eq:thm-exist-disc}
    \disc(A)
:=
\min_{x\in\{-1,1\}^n}
\Bigl\|\sum_{i=1}^n x_iA_i\Bigr\|
\le
C_E\sqrt{v+q}
\end{equation}
for every finite symmetric matrix family.
\end{theorem}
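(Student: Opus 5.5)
We combine Lemma~\ref{lem:fisher} with the Fisher information criterion of Proposition~\ref{prop:fisher-vertex}, applied in the coefficient space $\R^n$. Take $K=\Omega=\{x\in\R^n:\|S_x\|<1\}$. It is open, convex (the operator norm is convex and $x\mapsto S_x$ is linear) and symmetric. The vectors to be signed are the standard basis vectors $a_i=e_i$, since $\sum_i x_ie_i=x\in\Omega$ means exactly $\|S_x\|<1$. The density is $p_t$ from \eqref{eq:coefficient-density}. By Lemma~\ref{lem:fisher}, $p_t\in W^{1,1}(\R^n)$. It vanishes outside $\Omega$, because $e^{-\mathcal U}=0$ there, so $p_t\in W^{1,1}_\Omega(\R^n)$.

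\textbf{Choice of parameters.} Fix $t$ by $t^{-2}=1/18$, i.e.\ $t=\sqrt{18}$, and set $v_E:=\eta/18=t^{-2}\eta$. Then the hypothesis $v\le v_E$ is exactly $\|\sum_iA_i^2\|\le t^{-2}\eta$, so \eqref{eq:fisher-bound} gives
\[
J(p_t)\preceq M:=t^{-2}I+64C_RG .
\]
This $M$ is positive definite because $G\succeq0$. Its diagonal entries are $e_i^{\mathsf T}Me_i=1/18+64C_R\Tr(A_i^2)$. Taking $q_E:=1/(64C_R\cdot 36)$, they satisfy $e_i^{\mathsf T}Me_i\le 1/18+1/36=1/12<1/9$. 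Proposition~\ref{prop:fisher-vertex} then yields $x\in\{-1,1\}^n$ with $x\in\Omega$, i.e.\ $\|S_x\|<1$. Any split with $t^{-2}<1/9$ and $q_E$ small enough works equally well; the only thing to check is that the two requirements do not conflict. They do not: $t$ enters the variance condition only through $t^{-2}\eta$, and it enters the diagonal bound only through $t^{-2}$.

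\textbf{Discrepancy bound by scaling.} Given an arbitrary family, set $\lambda:=\max\bigl(\sqrt{v/v_E},\sqrt{q/q_E}\bigr)$. If $\lambda=0$, then all $A_i=0$ and there is nothing to prove. Otherwise, for any $\lambda'>\lambda$ the family $A_i/\lambda'$ satisfies both hypotheses, since $v$ and $q$ scale quadratically. The first part then gives a signing with $\|\sum_ix_iA_i\|<\lambda'$. Since there are finitely many signings, letting $\lambda'\downarrow\lambda$ gives $\disc(A)\le\lambda$. Finally,
\[
\lambda\le \max(v_E,q_E)^{-1/2}\ \text{replaced by}\ \min(v_E,q_E)^{-1/2}\sqrt{v+q},
\]
more precisely $\lambda\le\min(v_E,q_E)^{-1/2}\sqrt{v+q}$, so \eqref{eq:thm-exist-disc} holds with $C_E=\min(v_E,q_E)^{-1/2}$.

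\textbf{Main obstacle.} The substantive work is already contained in the earlier results: the curvature bound (Lemma~\ref{lem:hessian}) and the regularity and integration by parts in Lemma~\ref{lem:fisher}. The remaining checks are routine: that $\Omega$ is a legitimate open symmetric convex set (it may be unbounded when the $A_i$ are linearly dependent, but Proposition~\ref{prop:fisher-vertex} already handles unbounded $K$ by truncation), and the constant bookkeeping above.
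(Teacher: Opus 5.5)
Your proposal is correct and follows the paper's proof essentially verbatim. The paper takes $t=8$, so $v_E=\eta/64$, $q_E=1/(64L)$ with $L=64C_R$, and $M_{ii}\le 1/32<1/9$; it then applies Proposition~\ref{prop:fisher-vertex} with $K=\Omega$ and $a_i=e_i$, just as you do with $t=\sqrt{18}$. Two small clean-ups: the paper rescales directly by $r=\lambda$, which is allowed because the hypotheses are non-strict, so your $\lambda'\downarrow\lambda$ limit is unnecessary; and you should delete the garbled ``$\max(v_E,q_E)^{-1/2}$ replaced by'' line, since the correct constant is $C_E=\min(v_E,q_E)^{-1/2}=\max\{v_E^{-1/2},q_E^{-1/2}\}$, as in the paper.
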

\begin{proof}
Let \(\eta\) and \(C_R\) be the universal constants from
\cref{lem:gaussian-input}.  Set $L:=64C_R$, $v_E:=\frac{\eta}{64}$, and $q_E:=\frac{1}{64L}.$ We now apply \cref{lem:fisher} to the density \(p_t\) defined in
\eqref{eq:coefficient-density}, with the choice \(t=8\).
Indeed, since
\[
v=\Bigl\|\sum_{i=1}^n A_i^2\Bigr\|
\le
v_E
=
\frac{\eta}{64}
=
\frac{\eta}{8^2},
\]
we obtain using \cref{lem:fisher} that $J(p_8)
\preceq
\frac1{64}I+64C_R\,G$,  where $G_{ij}=\Tr(A_iA_j).$
Let $M:=\frac1{64}I+LG$.  Since  \(G\succeq0\), we have
\(M\succ0\).  Moreover, for every \(i\),
\[
M_{ii}
=
\frac1{64}+L\Tr(A_i^2)
\le
\frac1{64}+Lq_E
=
\frac1{32}
<
\frac19.
\]
Now, let  $\Omega
=
\biggl\{
x\in\R^n:
\Bigl\|\sum_{i=1}^n x_iA_i\Bigr\|<1
\biggr\}$. Then $\Omega$ is a open symmetric convex set. Moreover, 
by construction, \(p_8\) is a probability density in
\(W^{1,1}_\Omega(\R^n)\), and we have shown that
\[
J(p_8)\preceq M,
\qquad
M\succ0,
\qquad
\max_i M_{ii}<\frac19,
\]
and therefore, \cref{prop:fisher-vertex} yields that $x\in\Omega\cap\{-1,1\}^n$, or equivalently, $\Bigl\|\sum_{i=1}^n s_iA_i\Bigr\|<1$.

We now derive the discrepancy bound \eqref{eq:thm-exist-disc} for an arbitrary nonzero family of matrices by rescaling the coefficient matrices so that the hypotheses of the first part are satisfied. More specifically, let $r
=
\max\biggl\{
\sqrt{\frac{v}{v_E}},
\sqrt{\frac{q}{q_E}}
\biggr\}$ and set $\widetilde A_i=\frac{A_i}{r}$.
For the rescaled family we have the following bounds for associated  parameters $\widetilde v,\widetilde q$, 
\[
\Bigl\|\sum_i\widetilde A_i^2\Bigr\|
=
\frac{v}{r^2}
\le v_E, \qquad
\max_i\Tr(\widetilde A_i^2)
=
\frac{q}{r^2}
\le q_E.
\]
The first part of the theorem therefore gives signs \(x_i\) such that
$\Bigl\|\sum_i x_i\widetilde A_i\Bigr\|<1$, so that by 
multiplying by \(r\), $\disc(A)\le\Bigl\|\sum_i s_iA_i\Bigr\|<r$.
Finally, since $r
\le
\max\bigl\{v_E^{-1/2},q_E^{-1/2}\bigr\}\sqrt{v+q}$ we choose $C_E
=
\max\bigl\{v_E^{-1/2},q_E^{-1/2}\bigr\}$, which concludes the proof.
\end{proof}
\subsection{From Existence of Coloring to a Small-Ball Bound}
\label{sec:amplification}

The previous section, \cref{sec:signing}, provides a framework for
obtaining a single full coloring. We now show that a uniform existence
statement of this kind can be amplified into a quantitative Boolean
small-ball bound. The amplification argument is purely combinatorial
and is independent of the analytic or geometric method used to obtain
the initial coloring. In fact, the argument upgrades a universal full-signing
statement to a determinant-weighted Boolean small-ball estimate.

The following definition is ad-hoc and will be convenient to use in the replica amplification argument below. 
\begin{definition}\label{def:E}
For \(a,b>0\), we say that the \emph{\((a,b)\)-coloring property}
holds, and write \(\mathsf E(a,b)\), if every finite family of
symmetric matrices \(C_1,\ldots,C_N\), in arbitrary matrix dimension,
satisfying $\Bigl\|\sum_i C_i^2\Bigr\|\le a$ and $\max_i \Tr(C_i^2)\le b$
admits a coloring \(x\in\{-1,1\}^N\) such that $\Bigl\|\sum_i x_iC_i\Bigr\|\le1.$
\end{definition}

\begin{theorem}[Replica amplification: from existence-to-small-ball]
\label{thm:amplification}
Assume that \(\mathsf E(a,b)\) holds for some \(a,b>0\), and set
\[
A=4+4a,
\qquad
h=\frac{4A\log 2}{b},
\qquad
C_{a,b}
=
\max\biggl\{2,h+\frac{4}{3a}\biggr\}.
\]
Let \(A_1,\ldots,A_n\) be a symmetric matrix family with
\[
v:=\Bigl\|\sum_iA_i^2\Bigr\|\le\frac a4,
\qquad
q:=\max_i\Tr(A_i^2)\le\frac{b}{2A},
\]
and let $S_x:=\sum_{i=1}^n x_iA_i$ and $\tau=\sum_i\Tr(A_i^2)$.
Then, for every \(\beta>0\),
\begin{equation}\label{eq:amplification-partition}
\E_{x\sim U_n} W_\beta(S_x) 
\ge
\exp\bigl\{-C_{a,b}\max(1,\beta)\tau\bigr\},
\end{equation}
where $W_\beta$ is defined as in \eqref{eq:barrier}.
Moreover, if additionally  \(\tau\ge\frac12\), then 
\begin{equation}\label{eq:good-count}
\Prob_{x\sim U_n} \biggl\{
x\in\{-1,1\}^n:
\|S_x\|\le\frac12,\ 
\Tr(S_x^2)\le\frac{\tau}{a}
\biggr\} \ge e^{-h\tau},
\end{equation}
and the conditional law $\mu=U_n(\,\cdot\,|\bigl\{
\|S_x\|\le\frac12,\ 
\Tr(S_x^2)\le\frac{\tau}{a}
\bigr\})$
satisfies
\[
D(\mu\|U_n)\le h\tau,
\qquad
\E_\mu\Phi(S_x)\le\frac{4\tau}{3a},
\]
where $\Phi$ is defined as in \eqref{eq:barrier}.
\end{theorem}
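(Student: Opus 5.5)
The bookkeeping parts of the statement I can carry out with earlier lemmas; the counting bound \eqref{eq:good-count} I can only sketch. The plan is to prove \eqref{eq:good-count} first and obtain everything else from it.

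\textbf{Deriving the entropy and partition bounds from the count.} Suppose \eqref{eq:good-count} holds, and let $\mathcal G=\{x:\|S_x\|\le\tfrac12,\ \Tr(S_x^2)\le\tau/a\}$.
\begin{itemize}
\item For the conditional law $\mu=U_n(\cdot\mid\mathcal G)$, one has $D(\mu\|U_n)=-\log U_n(\mathcal G)\le h\tau$.
\item On $\mathcal G$, Lemma~\ref{lem:determinant} with $r=\tfrac12$ gives $\Phi(S_x)\le\tfrac43\Tr S_x^2\le\tfrac{4\tau}{3a}$, which is the bound on $\E_\mu\Phi$.
\item Then $\E_{U_n}W_\beta(S_x)\ge U_n(\mathcal G)\,e^{-\beta\cdot 4\tau/(3a)}\ge \exp\{-(h+\tfrac4{3a})\max(1,\beta)\tau\}$ when $\tau\ge\tfrac12$. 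Equivalently, one can use the Gibbs variational identity (Lemma~\ref{lem:gibbs}) with the test measure $\mu$.
\item When $\tau<\tfrac12$, Lemma~\ref{lem:small-trace} gives $e^{-2\max(1,\beta)\tau}$ directly.
\end{itemize}
This explains the constant $C_{a,b}=\max\{2,h+\tfrac4{3a}\}$.

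\textbf{Counting via replicas (main obstacle).} I would prove \eqref{eq:good-count} by contradiction, converting a uniform existence statement into a density statement. The idea is that if $\mathcal G$ were smaller than $e^{-h\tau}$, some structured family built from replicas of the $A_i$ would admit no good coloring. That family would still satisfy the hypotheses of $\mathsf E(a,b)$, which is impossible.

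\emph{Building the family.} I would take a random shift $y\in\{\pm1\}^n$ and use the identity $\tfrac{1}{2}(S_x+S_y)=\sum_i\tfrac{x_i+y_i}{2}A_i$. This turns a coloring of a doubled family $\{A_i\oplus A_i\}$, or of $\{A_i\}\cup\{A_i\}$, into two signings $x,y$ whose average is small and whose disagreement set carries the randomness. More concretely, I would build a block-diagonal family whose blocks are:
\begin{itemize}
\item $2A_i$, which forces $\|S_x\|\le\tfrac12$;
\item blocks $\sqrt{\cdot}$-scaled copies of $A_i\otimes e_j$ over $k\approx \tau/b$ independent replicas, so that a single $\mathsf E$-coloring controls $\Tr(S_x^2)$ through the Hilbert–Schmidt structure. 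This uses $\Tr S^2=\|S\otimes\cdot\|$-type linearisations.
\end{itemize}
The assumptions $v\le a/4$ and $q\le b/(2A)$ with $A=4+4a$ are exactly what leave room for the factor-$2$ rescaling plus the added blocks, keeping $\|\sum C_i^2\|\le a$ and $\max\Tr C_i^2\le b$.

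\emph{Extracting the bound.} Applying $\mathsf E(a,b)$ to each conditional family, obtained by fixing a random fraction of coordinates, produces a good completion for every such restriction. A union or entropy-counting argument over about $\tau/b$ independent blocks then multiplies the conditional success probabilities. This should give $U_n(\mathcal G)\ge 2^{-4A\tau/b}=e^{-h\tau}$; the factor $4A\log 2/b$ is the number of bits paid per unit of Frobenius mass.

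\textbf{Where I expect it to fail, and the fallback.} This replica encoding is the delicate step, and I am not sure it works. The specific risk is encoding the scalar constraint $\Tr S_x^2\le\tau/a$ and the conditioning on fixed coordinates while staying inside the hypotheses of $\mathsf E(a,b)$. If it does not work as stated, I would first try a simpler partial-coloring iteration: repeatedly apply $\mathsf E$ to subfamilies of Frobenius mass about $b$ to obtain halving steps. Each step would cost $O(1)$ bits of entropy per unit of $b$.
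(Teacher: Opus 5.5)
Your derivation of the partition-function, entropy and barrier bounds from the count \eqref{eq:good-count} is correct. It is exactly the paper's Steps 4--5, with Lemma~\ref{lem:small-trace} handling $\tau<\tfrac12$. The genuine gap is the count itself, which is the whole content of the theorem, and the mechanism you sketch would not deliver it.
\begin{itemize}
\item ``Applying $\mathsf E(a,b)$ to conditional families obtained by fixing coordinates'' is not possible. Once coordinates are fixed you must control $\|M+\sum_{i\,\mathrm{free}}x_iA_i\|$ with a constant offset $M$, and $\mathsf E(a,b)$ only speaks about centered sums.
\item The $A_i\otimes e_j$ blocks, as you describe them, do not produce $\|S\|_{\Fro}$: for fixed $j$ the signed sum is $S\otimes e_j$, whose norm is just $\|S\|$. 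What works is the dilation of $b_i=\vecop(A_i)$, $F_i(t)=\bigl(\begin{smallmatrix}0&tb_i^{\mathsf T}\\ tb_i&0\end{smallmatrix}\bigr)$. It satisfies $\|\sum_is_iF_i(t_i)\|=\|\sum_is_it_iA_i\|_{\Fro}$ and $\|\sum_iF_i(t_i)^2\|=\tau$, so the scaling $\sqrt{a/\tau}$ uses exactly the variance budget $a$ and yields the threshold $\Tr S^2\le\tau/a$.
\item Your replica count is inverted. $\tau/b$ is the exponent $n/k$, not the number $k$ of replicas; with $k\approx\tau/b$ replicas the union bound below gives nothing.
\item The partial-coloring fallback has the same defect as any single use of $\mathsf E$: it produces one coloring, not a lower bound on $U_n(\mathcal G)$.
\end{itemize}

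The missing idea is how randomization plus a union bound converts universal existence into density. Take $k=\lfloor bn/(2A\tau)\rfloor$ replicas of the whole family. Permute the $j$-th by $\pi_j$, multiply it by independent uniform masks $t_{j,i}$, and stack the replicas block-diagonally with one common sign per index: $C_i=\bigoplus_{j}\bigl(2t_{j,i}A_{\pi_j(i)}\oplus\sqrt{a/\tau}\,F_{\pi_j(i)}(t_{j,i})\bigr)$.
\begin{itemize}
\item Block-diagonal stacking does not add variances, so $\|\sum_iC_i^2\|=\max\{4v,a\}\le a$.
\item The Frobenius energy of each $C_i$ does grow with $k$. The load-balancing permutations of Lemma~\ref{lem:loads} give $\Tr C_i^2\le(4+2a/\tau)(k\tau/n+q)\le A(k\tau/n+q)\le b$. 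This is where $A=4+4a$, $q\le b/(2A)$ and $\tau\ge\tfrac12$ are actually used.
\item For each fixed $s$, the induced colorings $r^{(j)}_\ell=s_{\pi_j^{-1}(\ell)}t_{j,\pi_j^{-1}(\ell)}$ are i.i.d.\ uniform. The randomization has to be multiplicative, not an average $\tfrac12(S_x+S_y)$. Hence all replicas land in $\mathcal G$ with probability $p^k$, where $p=U_n(\mathcal G)$.
\item For every mask realization, $\mathsf E(a,b)$ gives some $s$ with $\|\sum_is_iC_i\|\le1$, i.e.\ every replica good. So the events $E_s$ cover the mask space and $1\le 2^np^k$. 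With $n/k\le4A\tau/b$ this gives $p\ge2^{-n/k}\ge e^{-h\tau}$.
\end{itemize}
Your ``contradiction'' framing is the contrapositive of exactly this union bound, but without the masked block-diagonal replicas it has nothing to act on.
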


For the proof of Theorem \ref{thm:amplification} we will need the following simple lemma. Note, that the replica construction combines several independently randomized copies of the original family of matrices into a single enlarged instance.  To apply the full-coloring principle to this enlarged family, we must
prevent any one of the new coefficient matrices from accumulating too much
Frobenius energy.  

\begin{lemma}[Balancing weights via permutations]\label{lem:loads}
Let \(w_1,\ldots,w_n\ge0\), and set $\tau=\sum_{i=1}^n w_i$ and $q=\max_{1\le i\le n}w_i$. Then, for every positive integer \(k\), there exist permutations
\(\pi_1,\ldots,\pi_k\) of \([n]\) such that
\begin{equation}\label{eq:loads}
\max_{1\le i\le n}
\sum_{j=1}^k w_{\pi_j(i)}
\le
\frac{k\tau}{n}+q.
\end{equation}
\end{lemma}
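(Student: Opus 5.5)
We will use cyclic shifts, chosen so that the load on every position equals exactly the sum of $k$ consecutive weights in a sorted arrangement. Relabel so that $w_1\ge w_2\ge\dots\ge w_n$, and take $\pi_j$ to be the cyclic shift $\pi_j(i)=i+(j-1)m \pmod n$ for a suitable stride $m$. The naive choice $m=1$ gives windows of $k$ consecutive indices, whose sum can be as large as $kq$, so it does not suffice. We will instead use the classical ``round-robin'' observation: for each fixed $i$, the values $w_{\pi_1(i)},\dots,w_{\pi_k(i)}$ should be spread evenly across the sorted list.

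Concretely, write $k=cn+\rho$ with $0\le\rho<n$. Any $n$ permutations together with $c$ full rounds contribute exactly $\tau$ per round to every position, giving the term $c\tau\le k\tau/n$ exactly. So the reduction is to the case $k<n$, where we need the bound $k\tau/n+q$.

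For $k<n$, set $\pi_j(i)=i+\lfloor (j-1)n/k\rfloor \pmod n$, or simply choose the $k$ shifts $\sigma_1<\dots<\sigma_k$ in $\{0,\dots,n-1\}$ to be as evenly spaced as possible, with gaps $g_j=\sigma_{j+1}-\sigma_j\in\{\lfloor n/k\rfloor,\lceil n/k\rceil\}$ cyclically. Fix a position $i$. Then $\sum_j w_{i+\sigma_j}$ samples the cyclic sequence once in each gap block. For a \emph{nonincreasing} sequence, the sample at the start of each block is bounded by the average of the preceding block. Summing over blocks gives at most $q$ (the first sample) plus $\sum_{\text{blocks}}(\text{block average})$. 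Since block lengths are $\ge\lfloor n/k\rfloor$, this needs care; to get exactly $k\tau/n$ the cleanest route is a fractional version.

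The cleaner alternative, which we would actually adopt, is a rounding argument. Consider the doubly stochastic matrix $P=\frac{k}{n}J$ scaled, i.e.\ we want a nonnegative integer matrix $N\in\mathbb Z_{\ge0}^{n\times n}$ with all row and column sums equal to $k$. Each row of $N$ should sum weights to at most $k\tau/n+q$, where row $i$'s load is $\sum_l N_{il}w_l$. By Birkhoff--von Neumann (König's theorem for $k$-regular bipartite multigraphs), any such $N$ decomposes into $k$ permutation matrices, which are exactly the $\pi_j$. So it suffices to construct $N$.

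Take the fractional matrix $F_{il}=k/n$, which has row load exactly $k\tau/n$ and column sums $k$. Then round it to an integer matrix preserving row and column sums while changing each row load by less than $q$. The standard tool here is the Beck--Fiala style or bipartite-flow rounding (pipage rounding on the bipartite graph with the extra constraint of row weighted sums); with sorted weights, a direct staircase rounding works. In row $i$, the cumulative entries $\sum_{l\le L}N_{il}$ are chosen as $\lfloor (kL+ (i-1)\cdot\text{offset})/n\rfloor$-type differences, which are the same as the evenly spaced shifts above. Abel summation against the nonincreasing $w$ then shows that the rounded load exceeds the fractional one by at most $w_1\cdot(\text{max cumulative discrepancy})\le q\cdot1$. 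This holds because the cumulative discrepancy $\sum_{l\le L}(N_{il}-k/n)$ lies in $(-1,1]$ and $w_l-w_{l+1}\ge0$ telescopes to $w_1\le q$.

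The main obstacle is making the staircase rounding consistent in columns (column sums exactly $k$) while keeping every row's cumulative discrepancy in $(-1,1]$. We would handle this by using the cyclic-shift construction with offsets $i/n$, i.e.\ $N_{il}=\lfloor (kl+i)/n\rfloor-\lfloor (k(l-1)+i)/n\rfloor$ (indices adapted cyclically), whose column sums are $k$ by a Hermite-type identity and whose row cumulative discrepancies are $<1$ by construction.
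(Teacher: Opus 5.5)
Your final construction is correct, but it takes a genuinely different route from the paper.

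\textbf{The paper's route.} The paper builds the permutations greedily, one round at a time. In each round it sorts the current loads increasingly and the weights decreasingly, and pairs them. It then shows by induction that the spread $\max_i L_i^{(r)}-\min_i L_i^{(r)}$ never exceeds $q$, because the new spread is a difference of two quantities in $[0,q]$. Since the minimum load is at most the average, this gives $\max\le\min+q\le k\tau/n+q$.

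\textbf{Your route.} You fix the whole load pattern at once. Sort so that $w_1\ge\dots\ge w_n$ and index rows by $i\in\{0,\dots,n-1\}$. Then $N_{il}=\lfloor (kl+i)/n\rfloor-\lfloor (k(l-1)+i)/n\rfloor$ has the following properties.
\begin{enumerate}
\item It is a nonnegative integer matrix whose row sums telescope to $k$.
\item Its column sums equal $k$ by Hermite's identity $\sum_{i=0}^{n-1}\lfloor x+i/n\rfloor=\lfloor nx\rfloor$.
\item The cumulative discrepancy $\lfloor (kL+i)/n\rfloor-kL/n$ lies in $(-1,1)$ and vanishes at $L=n$. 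Abel summation against the nonincreasing weights therefore bounds each row's excess over $k\tau/n$ by $w_1-w_n\le q$.
\item K\"onig's theorem splits the $k$-regular bipartite multigraph encoded by $N$ into $k$ perfect matchings, which are your $\pi_j$.
\end{enumerate}

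\textbf{Comparison.} Your approach gives a closed-form load pattern and isolates exactly why sorting helps: bounded cumulative rounding error against monotone weights. However, it imports K\"onig's edge-colouring theorem. The paper's greedy argument is completely elementary and constructs the permutations directly. It also keeps the spread bound after every round, so $k$ need not be fixed in advance.

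\textbf{Trimming your write-up.}
\begin{itemize}
\item Only your last paragraph is actually needed. The formula works verbatim for every $k\ge1$; entries of $N$ may exceed $1$ when $k\ge n$, which K\"onig's theorem handles. So the reduction to $k<n$ and the abandoned cyclic-shift sketch can be dropped.
\item The aside identifying the staircase rows with ``evenly spaced shifts'' is not accurate in general. Nothing depends on it, because the permutations come from the matching decomposition, not from shifts.
\end{itemize}
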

\begin{proof}
We construct the permutations successively. Suppose that
\(\pi_1,\ldots,\pi_r\) have already been chosen, and define the
accumulated weights $L_i^{(r)}
=
\sum_{j=1}^r w_{\pi_j(i)}$ for $1\le i\le n.$
To determine the next permutation, choose orderings
\(i_1,\ldots,i_n\) and \(j_1,\ldots,j_n\) of \([n]\) such that
$L_{i_1}^{(r)}
\le
L_{i_2}^{(r)}
\le
\cdots
\le
L_{i_n}^{(r)}$
and $w_{j_1}
\ge
w_{j_2}
\ge
\cdots
\ge
w_{j_n}$, and  then define $\pi_{r+1}(i_m):=j_m$ for $m=1,\ldots,n$.
Thus the largest remaining weights are assigned to the indices with
the smallest accumulated weights.

We claim that after every round,
\begin{equation} \label{eq:lem-bal-two}
    \max_i L_i^{(r)}-\min_i L_i^{(r)}\le q.
\end{equation}
The claim is trivially holds for \(r=0\). Suppose it holds after \(r\)
rounds. Consider two indices \(i,\ell\) with $L_i^{(r)}\le L_\ell^{(r)}.$
By construction, the newly assigned weights \(a_i,a_\ell\) satisfy $a_i\ge a_\ell.$
Hence
\begin{equation}\label{eq:lem-bal-one}
    \bigl(L_\ell^{(r)}+a_\ell\bigr)
-
\bigl(L_i^{(r)}+a_i\bigr)
=
\bigl(L_\ell^{(r)}-L_i^{(r)}\bigr)
-
(a_i-a_\ell).
\end{equation}
Both terms on the right-hand side of \eqref{eq:lem-bal-one} belong to \([0,q]\), and therefore the
absolute value of their difference is at most \(q\). This proves the
\eqref{eq:lem-bal-two} by induction.

After \(k\) rounds, set $M_k:=\max_{1\le i\le n}L_i^{(k)}$ and $m_k:=\min_{1\le i\le n}L_i^{(k)}$.
By the preceding argument, $M_k-m_k\le q$. 
On the other hand, the average of the accumulated weights is 
\[
\frac1n\sum_{i=1}^n L_i^{(k)}
=
\frac1n\sum_{i=1}^n\sum_{j=1}^k w_{\pi_j(i)}
=
\frac{k}{n}\sum_{i=1}^n w_i
=
\frac{k\tau}{n}.
\]
In particular, $m_k
\le
\frac{k\tau}{n}$, and therefore $\max_{1\le i\le n}
\sum_{j=1}^k w_{\pi_j(i)} = M_k
\le
m_k+q
\le
\frac{k\tau}{n}+q$, 
which is precisely \eqref{eq:loads}.
\end{proof}

\begin{proof}[Proof of \Cref{thm:amplification}]
First note that, if \(\tau=0\), then every \(A_i=0\), and the conclusion is immediate. Moreover, if \(0<\tau\le1/2\), the partition-function estimate follows directly from \cref{lem:small-trace}.  We may therefore assume throughout the remainder of the proof that $\tau\ge\frac12.$

\paragraph{Step 1: Choice of the number of replicas.}
Let $k
=
\left\lfloor
\frac{bn}{2A\tau}
\right\rfloor.$ 
Since $\frac{\tau}{n}
\le
q
\le
\frac{b}{2A},$
we have $\frac{bn}{2A\tau}\ge1$, so that  \(k\ge1\). Moreover, using that  \(\lfloor x\rfloor\ge x/2\) for \(x\ge1\), we obtain that  $\frac{n}{k}
\le
\frac{4A\tau}{b}.$ 

Now, let $w_i:=\Tr(A_i^2)$ for $i=1,\ldots,n$. Since $\sum_{i=1}^n w_i=\tau$
and $\max_i w_i=q$, by    \cref{lem:loads} there are permutations
\(\pi_1,\ldots,\pi_k\) of \([n]\) such that
\begin{equation}\label{eq:replica-balanced}
\max_{1\le i\le n}
\sum_{j=1}^k
\Tr\bigl(A_{\pi_j(i)}^2\bigr)
\le
\frac{k\tau}{n}+q.
\end{equation}

\paragraph{Step 2: Encoding the Frobenius norm constraint.}
For each \(i\), we let $b_i=\vecop(A_i)\in\R^{d^2},$ so that $\|b_i\|_2^2
=
\|A_i\|_{\Fro}^2
=
w_i.$
Moreover, for \(t\in\{-1,1\}\), we define $F_i(t)
=
\begin{pmatrix}
0 & t b_i^{\mathsf T}\\
t b_i & 0
\end{pmatrix}.$
Then, we have $F_i(t)^2
=
\begin{pmatrix}
w_i & 0\\
0 & b_ib_i^{\mathsf T}
\end{pmatrix},$
and in particular $\Tr F_i(t)^2=2w_i$.
Moreover, the lower block of $\sum_iF_i(t_i)^2
=
\begin{pmatrix}
\tau & 0\\
0 & \sum_i b_ib_i^{\mathsf T}
\end{pmatrix}$ is  positive semidefinite and has trace \(\tau\), and
hence has operator norm at most \(\tau\), so that $\|\sum_iF_i(t_i)^2\|=\tau.$
Let $z=\sum_i s_it_i b_i$. Finally, since,  $\sum_i s_iF_i(t_i) =
\begin{pmatrix}
0 & z^{\mathsf T}\\
z & 0
\end{pmatrix}$  has nonzero eigenvalues
\(\pm\|z\|_2\), we  therefore obtain
\begin{equation}\label{eq:arrowhead-recovery}
\left\|\sum_i s_iF_i(t_i)\right\|
=
\left\|\sum_i s_it_iA_i\right\|_{\Fro}.
\end{equation}

\paragraph{Step 3: The enlarged instance.}
Let $t_{j,i}\in\{-1,1\}$ be independent and uniformly distributed random signs for $1\le j\le k$ and $1\le i\le n$. For $i\in[n]$, consider the following familiy of matrices, 
\begin{equation}\label{eq:enlarged}
C_i
=
\bigoplus_{j=1}^k
\biggl(
2t_{j,i}A_{\pi_j(i)}
\ \oplus\
\sqrt{\frac{a}{\tau}}\,
F_{\pi_j(i)}(t_{j,i})
\biggr).
\end{equation}
We verify in the following that the family \(C_1,\ldots,C_n\) satisfies the hypotheses of \(\mathsf E(a,b)\). To this end, note that for each replica \(j\),
$\sum_i
\bigl(2t_{j,i}A_{\pi_j(i)}\bigr)^2
=
4\sum_iA_i^2.$
  Using that $\|\sum_iF_i(t_i)^2\|=\tau$ we have $\|
\frac{a}{\tau}
\sum_i
F_{\pi_j(i)}(t_{j,i})^2\| =a.$
Moreover, since \(C_i\) is block diagonal we have $\left\|\sum_iC_i^2\right\|
=
\max\{4v,a\}
\le a,$
where we used \(v\le a/4\).
Now, note that,
\[
\Tr C_i^2 = \sum_{j=1}^k
\biggl(
4w_{\pi_j(i)}
+
\frac{2a}{\tau}w_{\pi_j(i)}
\biggr) = \biggl(
4+\frac{2a}{\tau}
\biggr)
\sum_{j=1}^k w_{\pi_j(i)}\le \biggl(
4+\frac{2a}{\tau}
\biggr)\biggl(
\frac{k\tau}{n}+q
\biggr),
\]
where we used  \eqref{eq:replica-balanced} in the last step.
Since \(\tau\ge1/2\), we have that $4+\frac{2a}{\tau}
\le
4+4a
=
A$, and therefore $\Tr C_i^2
\le
A\bigl(
\frac{k\tau}{n}+q
\bigr).$ Finally, using that $A\frac{k\tau}{n}\le\frac b2$ and $Aq\le\frac b2$, shows that $\Tr C_i^2\le b$ for every $i$. 

Therefore, the family of matrices $C_1,\dots, C_n$ belong to the class of matrix families for which the property  \(\mathsf E(a,b)\) applies, that is to say,  for every realization of the random masks \(T=(t_{j,i})\), there exists a
coloring $s=s(T)\in\{-1,1\}^n$ such that $\left\|\sum_i s_iC_i\right\|\le1.$ 
However, since  this matrix is in fact block diagonal, every block has operator norm  at most one.  In particular, for each \(j=1,\ldots,k\) we have that 
$\sqrt{\frac{a}{\tau}}
\left\|
\sum_{i=1}^n
s_iF_{\pi_j(i)}(t_{j,i})
\right\|
\le1$ and
$\|
2\sum_i s_it_{j,i}A_{\pi_j(i)}\|
\le1$, and therefore, we have  
\begin{equation}\label{eq:replica-good}
\biggl\|
\sum_i s_it_{j,i}A_{\pi_j(i)}
\biggr\|
\le\frac12,
\qquad
\biggl\|
\sum_i s_it_{j,i}A_{\pi_j(i)}
\biggr\|_{\Fro}^2
\le\frac{\tau}{a},
\end{equation}
where we have used \eqref{eq:arrowhead-recovery}.
\paragraph{Step 3: Replica amplification.}
Let
\[
G
=
\biggl\{
r\in\{-1,1\}^n:
\|S_r\|\le\frac12,\
\Tr(S_r^2)\le\frac{\tau}{a}
\biggr\},
\]
and let $p:=U_n(G)$. Moreover, let  \(s\in\{-1,1\}^n\) be some coloring. For each replica \(j\), define the induced coloring \(r^{(j)}\in\{-1,1\}^n\) by 
$r_\ell^{(j)}
=
s_{\pi_j^{-1}(\ell)}
t_{j,\pi_j^{-1}(\ell)}$ for $1\le\ell\le n$. Then, 
\[
\sum_{\ell=1}^n r_\ell^{(j)}A_\ell
=
\sum_{i=1}^n
s_it_{j,i}A_{\pi_j(i)}.
\]
Moreover, since $r^1,\dots,r^k$ are independent and uniformly distributed colorings 
$\{-1,1\}^n$, we have that $\Prob\bigl(
r^{(j)}\in G,\; j\in[k]
\bigr)
=
\prod_{j=1}^k U_n(G)
=
p^k.$

For each fixed coloring \(s\in\{-1,1\}^n\), let \(E_s\) denote the
event $E_s
=
\bigcap_{j=1}^k
\{r^{(j)}\in G\}.$ Moreover, fix  an arbitrary realization \(T=(t_{j,i})\) of the
auxiliary random signs. As we have seen in Step 2 above, for any such realization, the matrices \(C_1,\ldots,C_n\) satisfy the hypotheses of \(\mathsf E(a,b)\), and thus 
 there exists at least one coloring $s\in\{-1,1\}^n$ such that $\left\|\sum_i s_iC_i\right\|\le1$. By \eqref{eq:replica-good}, this implies that 
the  array \(T\) belongs to \(E_s\) for at least one
\(s\in\{-1,1\}^n\). Since this holds for every realization \(T\), the events \(E_s\)
cover the entire smaple space $\bigcup_{s\in\{-1,1\}^n} E_s
=
\Omega_T$. Therefore, applying the union bound and using \(\Prob(E_s)=p^k\) for every fixed
\(s\), we obtain $1
\le
\sum_{s\in\{-1,1\}^n}\Prob(E_s)
=
2^n p^k$, or equivalently, $p
\ge
2^{-n/k}.$ Finally, using $\frac{n}{k}
\le
\frac{4A\tau}{b}$, we obtain that, 
\[
p
\ge
\exp\biggl(
-\frac{n}{k}\log2
\biggr)
\ge
\exp\biggl(
-\frac{4A\log2}{b}\tau
\biggr)
=
e^{-h\tau},
\]
which proves \eqref{eq:good-count}.

\paragraph{Step 4: The determinant-weighted bound.}
For every \(s\in G\), we have \(\|S_s\|\le1/2\).  Applying
\cref{lem:determinant} with \(r=1/2\) gives,
\[
\Phi(S_s)
\le
\frac{\Tr(S_s^2)}{1-1/4}
=
\frac43\Tr(S_s^2)
\le
\frac{4\tau}{3a}.
\]
Therefore, we have for all $s\in G$
\[
W_\beta(S_s)
=
e^{-\beta\Phi(S_s)}
\ge
\exp\biggl(
-\frac{4\beta\tau}{3a}
\biggr).
\]
Moreover, using \eqref{eq:good-count} we obtain,
\begin{equation}\label{eq:thm-ampl-one}
    \E_{s\sim U_n}W_\beta(S_s) \ge U_n(G)
\exp\biggl(
-\frac{4\beta\tau}{3a}
\biggr) \ge\exp\biggl\{
-\biggl(
h+\frac{4\beta}{3a}
\biggr)\tau
\biggr\}.
\end{equation}
Finally, for \(\tau\ge1/2\), \eqref{eq:thm-ampl-one} implies $ \E_{s\sim U_n}W_\beta(S_s)  \ge
e^{-C(a,b)\bstar\tau}$, and together with \cref{lem:small-trace}, which handles
 the regime \(0\le\tau\le1/2\), this proves
\eqref{eq:amplification-partition} for all \(\tau\ge0\).

\paragraph{Step 5: Entropy and barrier bounds.}
Let $\mu:=U_n(\,\cdot\,|G)$.  Since \(U_n(G)=p\), we have that $D(\mu\|U_n)
=
\log\frac1p
\le
h\tau.$
Furthermore, since every \(s\in G\) satisfies $\Phi(S_s)\le\frac{4\tau}{3a},$ we also have that $\E_\mu\Phi(S_s)
\le
\frac{4\tau}{3a}$, which concludes the proof. 
\end{proof}
\subsection{Proof of Theorem \ref{thm:main}}
\label{sec:off-diagonal}

To prove our main Theorem \ref{thm:main} we will again use the replica amplification argument from the previous section, however under the stronger variance parameter $\nu = \min_{U\in O(d)}\|\nlsum_i(A_i-P_UA_i)^2\|$, where $P_U(H) = U\diag\bigl(U^{\mathsf T}HU\bigr)U^{\mathsf T}$ and \(\diag(\cdot)\) retains only the diagonal entries. 


\begin{proposition}[Full signing under small off-diagonal variance]
\label{prop:offdiag-existence}
There exist universal constants \(a_*,b_*>0\) such that the following
holds. Let   \(n,d\ge1\) and $A_1,\ldots,A_n\in\Sym_d(\R)$ be a family of matrices satisfying,
\[
\nu(A)\le a_*,
\qquad
\max_{1\le i\le n}\Tr(A_i^2)\le b_*.
\]
Then there exists a coloring \(x\in\{-1,1\}^n\) such that $\bigl\|\sum_{i=1}^n s_iA_i\bigr\|<1.$

\end{proposition}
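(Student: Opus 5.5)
We plan to apply \cref{prop:fisher-vertex} in a \emph{lifted} space that separates the commuting part of the family from its non-commuting remainder. Fix $U\in O(d)$ attaining the minimum in $\nu(A)$; by conjugating with $U$ we may assume $U=I$. Write $A_i=D_i+E_i$, where $D_i=P_I A_i$ is diagonal and $E_i=A_i-D_i$ has zero diagonal. Let $\delta_i\in\R^d$ be the diagonal of $D_i$, so that
\[
\|\delta_i\|_2^2+\|E_i\|_{\Fro}^2=\Tr A_i^2\le b_*,
\qquad
\Bigl\|\sum\nolimits_i E_i^2\Bigr\|=\nu(A)\le a_*.
\]
It then suffices to find $x\in\{-1,1\}^n$ with
\[
\Bigl\|\sum\nolimits_i x_i\delta_i\Bigr\|_\infty<\tfrac12
\quad\text{and}\quad
\Bigl\|\sum\nolimits_i x_iE_i\Bigr\|<\tfrac12,
\]
since then $\|S_x\|\le\|D_x\|+\|E_x\|<1$.

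We work in $\R^d\times\R^n$ with the vectors $a_i=(\delta_i,e_i)$, where $e_i$ is the $i$-th standard basis vector, and with the set
\[
K=(-\tfrac12,\tfrac12)^d\times\Omega_E,
\qquad
\Omega_E=\Bigl\{z\in\R^n:\Bigl\|\sum\nolimits_i z_iE_i\Bigr\|<\tfrac12\Bigr\}.
\]
This $K$ is open, convex and symmetric. A signing with $\sum_i x_ia_i=(\sum_i x_i\delta_i,\,x)\in K$ is exactly what we need. On $K$ we take the product density $p(y,z)=c(y)\,p_t(z)$, with the following two factors.
\begin{itemize}
\item $c(y)=\prod_{k=1}^d 2\cos^2(\pi y_k)$ on the cube. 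Each one-dimensional factor lies in $W^{1,1}$ and has finite Fisher information $c_0$ (a universal constant), so $J(c)=c_0I_d$.
\item $p_t$ is the barrier density \eqref{eq:coefficient-density} built from the rescaled matrices $2E_i$, whose support is exactly $\Omega_E$.
\end{itemize}
Applying \cref{lem:fisher} to the family $(2E_i)$ with $t=8$ requires $\|\sum_i 4E_i^2\|\le\eta/64$, and this is guaranteed by taking $a_*\le\eta/256$. The lemma then gives $J(p_8)\preceq\tfrac1{64}I+4L\,G_E$, where $L=64C_R$ and $(G_E)_{ij}=\Tr(E_iE_j)$.

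For a product density the Fisher information matrix is block diagonal, with blocks equal to the Fisher matrices of the two factors; the cross term vanishes because each factor integrates its own gradient to zero. Hence
\[
J(p)\preceq M:=\begin{pmatrix}c_0I_d&0\\0&\tfrac1{64}I+4LG_E\end{pmatrix}\succ0 .
\]
Moreover, $p\in W^{1,1}_K$ as a product of $W^{1,1}$ densities with bounded support or Gaussian tails. For each $i$ we get
\[
a_i^{\mathsf T}Ma_i=c_0\|\delta_i\|_2^2+\tfrac1{64}+4L\|E_i\|_{\Fro}^2\le\tfrac1{64}+(c_0+4L)\,b_* ,
\]
which is below $1/9$ once $b_*$ is a small universal constant. \Cref{prop:fisher-vertex} then yields $x\in\{-1,1\}^n$ with $\sum_i x_ia_i\in K$, which completes the argument.

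The step I expect to need the most care is the product Fisher bound together with membership of $p$ in $W^{1,1}_K$. One must check that the cube factor vanishes to second order at the faces, so that no boundary mass contributes to the distributional derivative. One must also check that the cross-block entries $\int\partial_{y_k}c\,\partial_{z_j}p_t/(c\,p_t)$ factor as $\bigl(\int\partial_{y_k}c\bigr)\bigl(\int\partial_{z_j}p_t\bigr)=0$. If that turned out to be delicate, the fallback is to apply \cref{thm:tv-input} directly to the whitened product density, using subadditivity of directional variation over the two blocks.
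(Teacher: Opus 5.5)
Your proof is correct and is essentially the paper's argument in different coordinates. The paper keeps the vectors $(e_i,0)$ and shears the density instead, using $p(x,y)=p_R(x)\rho(y-2\Gamma x)$ on a sheared product set, whereas you put the shear into the vectors $(\delta_i,e_i)$ and use the plain product density on $(-\tfrac12,\tfrac12)^d\times\Omega_E$. The two setups are related by the unimodular map $T$, and your bound $c_0\|\delta_i\|_2^2+\tfrac1{64}+4L\|E_i\|_{\Fro}^2$ with $c_0=4\pi^2$ is exactly the paper's $\tfrac1{64}+4L\|R_i\|_{\Fro}^2+4\pi^2\|D_i\|_{\Fro}^2$, with the same choice $a_*=\eta/256$.
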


\begin{proof}
Choose an orthonormal basis \(U\in O(d)\) for which the minimum in the definition of
\(\nu(A)\) is attained, and conjugate every $A_i$ by $U$.  In this basis, write $A_i=D_i+R_i,$
where \(D_i\) is diagonal and \(R_i\) has zero diagonal.  Then, we have $\Bigl\|\sum_{i=1}^n R_i^2\Bigr\|
=
\nu(A)$, and moreover,   $\|D_i\|_{\Fro}^2+\|R_i\|_{\Fro}^2
=
\|A_i\|_{\Fro}^2
=
\Tr(A_i^2)$. For each \(i=1,\ldots,n\), let $d_i:=\diag(D_i) \in\R^d,$ and define the linear map $\Gamma:\R^n\longrightarrow\R^d,$
by $\Gamma x :=\sum_{i=1}^n x_i d_i$, so that  \(\Gamma\) is the \(d\times n\) matrix whose \(i\)-th
column is \(d_i\) and $\|\Gamma e_i\|_2^2 =\|D_i\|_{\Fro}^2.$

\paragraph{Step 1: A density associated with the off-diagonal part.}
Consider the density function,
\[
p_R(x)
=
\frac{(2\pi 8^2)^{-n/2}}{\mathcal Z_R}
\exp\biggl(
-\frac{\|x\|_2^2}{2\cdot 8^2}
-
\mathcal U\biggl(2\sum_{i=1}^n x_iR_i\biggr)
\biggr),
\]
associated with the family $2R_1,\ldots,2R_n$ as defined in \eqref{eq:coefficient-density} with $t=8$. Since \(\mathcal U(H)=+\infty\) for \(\|H\|\ge1\), the density function $p_R$
vanishes  outside of $\Omega_R
:=
\biggl\{
x\in\R^n:
\Bigl\|2\sum_{i=1}^n x_iR_i\Bigr\|<1
\biggr\}.$ Let \(4G_R\) be the  Gram matrix of the coefficient matrices \(2R_i\), that is $(G_R)_{ij}:=\Tr(R_iR_j).$
Moreover, let $\eta$ be the universal constant from Lemma \ref{lem:gaussian-input} and let \(a_*=\eta/256\). Then, we have that $\Bigl\|\sum_i(2R_i)^2\Bigr\|
=
4\nu(A)
\le
4a_*
=
\frac{\eta}{64}
=
\frac{\eta}{8^2}$, and therefore using \cref{lem:fisher},
\begin{equation}\label{eq:offdiag-fisher}
J(p_R)
\preceq
M_R
:=
\frac1{64}I_n+4LG_R,
\end{equation}
where $L=64 C_R$.
In particular, \(p_R\in W^{1,1}(\R^n)\) and has finite Fisher
information.

\paragraph{Step 2: A  density associated with the diagonal part.}
We consider the following probability density function over \(\R^d\), 
\[
\rho(z)
=
\prod_{j=1}^d
\cos^2\biggl(\frac{\pi z_j}{2}\biggr)
\mathbf 1_{\{|z_j|<1\}},
\]
where $\rho$ has the Fisher information matrix $J(\rho)=\pi^2I_d.$
Moreover, note that \(\rho\in W^{1,1}(\R^d)\) and that $\rho$ is supported in the open cube $(-1,1)^d.$
\paragraph{Step 3: Applying the Fisher information criterion.}
Let \(X\) and \(Z\) be independent random vectors with densities
\(p_R\) and \(\rho\), respectively, and define $Y:=2\Gamma X+Z.$
The joint density of \((X,Y)\) is $p(x,y)
=
p_R(x)\rho(y-2\Gamma x).$ Indeed, the map $(x,z)\longmapsto(x,2\Gamma x+z)$ is invertible with determinant one. Moreover, note that  \(p\) vanishes
almost everywhere outside of 
\begin{equation}\label{eq:sheared-target}
K
:=
\biggl\{
(x,y)\in\R^n\times\R^d:
\Bigl\|2\sum_i x_iR_i\Bigr\|<1,\ 
\|y-2\Gamma x\|_\infty<1
\biggr\}.
\end{equation}
Furthermore,  \(K\) is a  symmetric open  convex set.  Indeed, the map $(x,y)\longmapsto 2\sum_i x_iR_i$  and $(x,y)\longmapsto y-2\Gamma x$ are linear, and the operator-norm unit ball and the open cube $(x,y)\longmapsto y-2\Gamma x$ are  open convex 
symmetric sets, so that  each of the two constraints
defining \(K\) determines an open convex symmetric set, and so does
their intersection.

Now, let $q(x,z):=p_R(x)\rho(z)$ denote joint density of $X,Z$.  Since $X$ and $Z$ are independent, the Fisher information matrix associated to $q$ is block diagonal, $J(q)
=
\begin{pmatrix}
J(p_R)&0\\
0&\pi^2I_d
\end{pmatrix}.$
Using \eqref{eq:offdiag-fisher}, we therefore have $J(q)
\preceq
\begin{pmatrix}
M_R&0\\
0&\pi^2I_d
\end{pmatrix}.$ Moreover, let $T
=
\begin{pmatrix}
I_n&0\\
-2\Gamma&I_d
\end{pmatrix}$, so that  \(p=q\circ T\) and \(\det T=1\).  Therefore, we have that
$J(p)
=
T^{\mathsf T}J(q)T$ and, in particular $J(p)
\preceq
M
:=
T^{\mathsf T}
\begin{pmatrix}
M_R&0\\
0&\pi^2I_d
\end{pmatrix}
T.$
Furthermore, note that, since  \(M_R\succ0\), the matrix \(M\) is positive definite, and  moreover,
\(p\in W^{1,1}_K(\R^{n+d})\).


Now, let $a_i:=(e_i,0)\in\R^n\times\R^d$ for
\(i=1,\ldots,n\), so that $\sum_i s_i a_i=(s,0).$ Then, we have that,
\[
a_i^{\mathsf T}Ma_i = e_i^{\mathsf T}M_Re_i
+
4\pi^2\|\Gamma e_i\|_2^2 = \frac1{64}
+
4L\|R_i\|_{\Fro}^2
+
4\pi^2\|D_i\|_{\Fro}^2.
\]
Moreover, let $L_*:=\max\{L,\pi^2\}$ and  \(b_*=(256L_*)^{-1}\), then 
\[
a_i^{\mathsf T}Ma_i \le  \frac1{64}
+
4L_*\Tr(A_i^2) \le 
\frac1{64}
+
4L_*b_*< \frac19.
\]
Therefore, by Lemma \ref{lem:fisher} there are  signs \(s_1,\ldots,s_n\in\{-1,1\}\) such that
$\sum_{i=1}^n s_i a_i
=
(s,0)
\in K$, and hence, we obtain that $\Bigl\|\sum_i s_iR_i\Bigr\|<\frac12$ and $\|\Gamma s\|_\infty<\frac12.$ Since $\Gamma s
=
\diag\biggl(\sum_i s_iD_i\biggr)$, we moreover have that $\Bigl\|\sum_i s_iD_i\Bigr\|
=
\|\Gamma s\|_\infty
<
\frac12.$ Finally, since \(A_i=D_i+R_i\), we conclude that,

\[
\Bigl\|\sum_i s_iA_i\Bigr\|
\le
\Bigl\|\sum_i s_iR_i\Bigr\|
+
\Bigl\|\sum_i s_iD_i\Bigr\|
<
\frac12+\frac12
=
1.
\]
\end{proof}

\begin{proof}[Proof of Theorem~\ref{thm:main}]
The proof follows the same general strategy as the proof of
Theorem~\ref{thm:amplification}. For this reason, we omit some details at various intermediate steps and refer the reader to the proof of Theorem~3.6 for more thorough exposition.

We first assume that $r=1$.
Let \(a_*,b_*>0\) be the universal constants from Proposition \ref{prop:offdiag-existence}, and set $A_*:=4+4a_*$ and $h_*:=\frac{4A_*\log 2}{b_*}$. Define
\begin{equation}\label{eq:main-constants}
\nu_0:=\frac{a_*}{4},
\qquad
q_1:=\frac{b_*}{2A_*},
\qquad
C_1
:=
\max\biggl\{
2,\,
h_*+\frac{4}{3a_*}
\biggr\}.
\end{equation}

Let $w_i:=\Tr(A_i^2),$ $\tau:=\sum_{i=1}^n w_i,$ and $q:=\max_iw_i$.
Assume that $\nu(A)\le\nu_0$ and $q\le q_1.$ Now, note that if \(0\le\tau\le1/2\), then the desired partition-function estimate follows
directly from Lemma \ref{lem:small-trace}. We may therefore assume $\tau\ge\frac12.$

\paragraph{Step 1: Replica amplification.}
Let $k
:=
\left\lfloor
\frac{b_*n}{2A_*\tau}
\right\rfloor.$ Then $k\ge 1$, as $\frac{\tau}{n}\le q\le\frac{b_*}{2A_*}$. 
Moreover, $\frac{n}{k}
\le
\frac{4A_*\tau}{b_*}.$

Applying \cref{lem:loads} to the weights \(w_i=\Tr(A_i^2)\), we obtain permutations
\(\pi_1,\ldots,\pi_k\) of \([n]\) satisfying $\max_i
\sum_{j=1}^k
w_{\pi_j(i)}
\le
\frac{k\tau}{n}+q.$

Let $t_{j,i}\in\{-1,1\}$ be independent random signs for $1\le j\le k$, $ 1\le i\le n.$
Moreover, let   \(F_1(t),\dots, F_n(t)\) be defined as in step 2 of the proof of Theorem \ref{thm:amplification}, and consider the auxiliary family of matrices, 
\begin{equation}\label{eq:main-enlarged}
C_i
:=
\bigoplus_{j=1}^k
\biggl(
2t_{j,i}A_{\pi_j(i)}
\ \oplus\
\sqrt{\frac{a_*}{\tau}}\,
F_{\pi_j(i)}(t_{j,i})
\biggr).
\end{equation}
We now estimate the off-diagonal variance $\nu(C)$ of the auxiliary family $C_1,\dots,C_n$.
To this end, choose an orthonormal basis \(U\in O(d)\) for which the minimum in the definition of
\(\nu(A)\) is attained, and conjugate every $A_i$ by $U$.  In this basis, write $A_i=D_i+R_i,$
where \(D_i\) is diagonal and \(R_i\) has zero diagonal. Then, we have that $\Bigl\|
\sum_i
\bigl(2t_{j,i}R_{\pi_j(i)}\bigr)^2
\Bigr\|
=
4\Bigl\|\sum_iR_i^2\Bigr\|
=
4\nu(A).$  Whereas, for $F_i(t)$ we use the standard basis in which $F_i(t)
=
\begin{pmatrix}
0&t b_i^{\mathsf T}\\
t b_i&0
\end{pmatrix}$ has zero diagonal. 
Let \(\widetilde R_i\) denote the off-diagonal part of \(C_i\)
with respect to this blockwise basis. Then, we have, 
\[
\widetilde R_i
=
\bigoplus_{j=1}^k
\biggl(
2t_{j,i}R_{\pi_j(i)}
\ \oplus\
\sqrt{\frac{a_*}{\tau}}\,
F_{\pi_j(i)}(t_{j,i})
\biggr),
\]
and consequently,
\[
\sum_i\widetilde R_i^2
=
\bigoplus_{j=1}^k
\biggl(
4\sum_i R_{\pi_j(i)}^2
\ \oplus\
\frac{a_*}{\tau}
\sum_iF_{\pi_j(i)}(t_{j,i})^2
\biggr).
\]
Moreover, as in  the proof of Theorem \ref{thm:amplification} we have that $\Bigl\|
\frac{a_*}{\tau}
\sum_i
F_{\pi_j(i)}(t_{j,i})^2
\Bigr\|
=
a_*$, and hence we obtain that, $\Bigl\|\sum_i\widetilde R_i^2\Bigr\|
=
\max\{4\nu(A),a_*\}.$ Finally, since this a off-diagonal variance bound obtained in the particular blockwise basis constructed above, we have that $\nu(C)
\le
\max\{4\nu(A),a_*\}\le a_*.$

Now, using that $4+\frac{2a_*}{\tau}\le A_*$, $A_*\frac{k\tau}{n}\le\frac{b_*}{2}$ and $A_*q\le\frac{b_*}{2}$, we obtain as in the proof of \cref{thm:amplification},

\[
\Tr(C_i^2) = \biggl(
4+\frac{2a_*}{\tau}
\biggr)
\sum_{j=1}^k w_{\pi_j(i)} \le A_*
\biggl(
\frac{k\tau}{n}+q
\biggr) \le b_*
\]

We may therefore apply \cref{prop:offdiag-existence} to the family
\(C_1,\ldots,C_n\) to obtain that for every realization of the auxiliary random
signs \(t_{j,i}\), there exists a coloring
\(s\in\{-1,1\}^n\) such that $\Bigl\|\sum_i s_iC_i\Bigr\|<1$. In particular, for every
\(j=1,\ldots,k\) we obtain that

\begin{equation}\label{eq:main-replica-good}
\Bigl\|
\sum_i s_it_{j,i}A_{\pi_j(i)}
\Bigr\|
<
\frac12,
\qquad
\Bigl\|
\sum_i s_it_{j,i}A_{\pi_j(i)}
\Bigr\|_{\Fro}^2
<
\frac{\tau}{a_*}.
\end{equation}

As in the proof of Theorem \ref{thm:amplification}, consider the event
\[
G
:=
\biggl\{
r\in\{-1,1\}^n:
\|S_r\|\le\frac12,\,
\Tr(S_r^2)\le\frac{\tau}{a_*}
\biggr\},
\]
and let $p:=U_n(G).$ For each replica \(j\), define a 
coloring \(r^{(j)}\in\{-1,1\}^n\) by $r_\ell^{(j)}
=
s_{\pi_j^{-1}(\ell)}
t_{j,\pi_j^{-1}(\ell)}$.
Then, $\sum_{\ell=1}^n r_\ell^{(j)}A_\ell
=
\sum_{i=1}^n
s_it_{j,i}A_{\pi_j(i)}.$
Moreover, for each fixed \(s\), the colorings
\(r^{(1)},\ldots,r^{(k)}\) are independent and uniformly distributed
on \(\{-1,1\}^n\). Following the same ideas as in step 3 of the proof of Theorem \ref{thm:amplification} we obtain that, 
\begin{equation*}
    p
\ge
2^{-n/k}
\ge
\exp\biggl(
-\frac{4A_*\log2}{b_*}\tau
\biggr)
=
e^{-h_*\tau}.
\end{equation*}

\paragraph{Step 2: The determinant-weighted estimate.}
Following the same ideas as in step 4  in the proof of Theorem \ref{thm:amplification}, we obtain
for every \(s\in G\), $W_\beta(S_s)
\ge
\exp\biggl(
-\frac{4\beta\tau}{3a_*}
\biggr)$.
Therefore, we have that, 
\[
Z_\beta(A)=\E_{s\sim U_n}W_\beta(S_s)\ge U_n(G)
\exp\biggl(
-\frac{4\beta\tau}{3a_*}
\biggr) \ge \exp\biggl\{
-\biggl(
h_*+\frac{4\beta}{3a_*}
\biggr)\tau
\biggr\}
\]

Finally, together with Lemma \ref{lem:small-trace}, we obtain that $Z_\beta(A)
\ge
\exp\bigl(
-C_1\max\{1,\beta\}\tau
\bigr)$
for every \(\beta>0\).
\paragraph{Step 3: Entropy-barrier.}
Apply the Gibbs variational identity of Lemma \ref{lem:gibbs} to the
partition-function bound $Z_\beta(A)
\ge
\exp\bigl(
-C_1\max\{1,\beta\}\tau
\bigr)$ to obtain the existence of a probability
distribution \(\mu\) on the Boolean cube satisfying the claimed
relative-entropy and barrier estimate.\\

Finally, we briefly  argue now that the general case reduces to the case \(r=1\). Indeed, for an arbitrary \(r>0\), we can apply the above result to the family of matrices $\widetilde A_i:=\frac{A_i}{r}$. Let $\nu_0$, $q_1$ and $C_1$ be defined as in \eqref{eq:main-constants}. Assume that $\nu(A)\le\nu_0r^2$ and $\max_i \Tr(A_i^2)\le q_1r^2$.
Then, since \(P_U\) is linear for every \(U\in O(d)\), we have $\nu(\widetilde A)
=
\frac{\nu(A)}{r^2}\le \nu_0.$
Moreover, we also have that $\max_i\Tr(\widetilde A_i^2)
=
\frac{1}{r^2}\max_i\Tr(A_i^2)
=
q_1$ and  $\sum_i\Tr(\widetilde A_i^2)
=
\tau.$ Finally, since  $\widetilde S_s=\frac{S_s}{r}$, we obtain that $\det\bigl(I-\widetilde S_s^2\bigr)^\beta
\mathbf 1_{\{\|\widetilde S_s\|<1\}}
=
\det\bigl(I-r^{-2}S_s^2\bigr)^\beta
\mathbf 1_{\{\|S_s\|<r\}}.$  
\end{proof}


\section{Applications to Fundamental Matrix Discrepancy Problems}\label{sec:applications}


\subsection{Koml\'os from Boolean Small-Ball}

\begin{corollary}[Koml\'os and a weighted count]\label{cor:komlos}
Let $A\in\R^{m\times n}$ have columns $a_i$ satisfying
$\|a_i\|_2\le1$, and let $T=\sum_i\|a_i\|_2^2$.
Then, for every $K\ge q_1^{-1/2}$,
\begin{equation}\label{eq:komlos-partition}
 \E_s\left[
  \prod_{j=1}^m\left(1-\frac{(As)_j^2}{K^2}\right)^\beta
  \mathbf1_{\{\|As\|_\infty<K\}}\right]
 \ge e^{-C_1\max(1,\beta)T/K^2},
\end{equation}
where $q_1$ is the universal constant from Theorem \ref{thm:main}.
In particular, there exists a coloring $s\in\{-1,1\}^n$ such that  $\|As\|_\infty<K$, and moreover, there exist at least $2^n e^{-C_1T/K^2}$ colorings with discrepancy at most $K$.
\end{corollary}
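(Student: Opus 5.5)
We realize the vector $As$ as a diagonal matrix series and apply Theorem~\ref{thm:main} with $r=K$. For each column $a_i\in\R^m$ set $A_i:=\diag(a_i)\in\Sym_m(\R)$, so that $S_s=\sum_i s_iA_i=\diag(As)$. Then $\|S_s\|=\|As\|_\infty$ and $\det(I-K^{-2}S_s^2)=\prod_{j=1}^m\bigl(1-(As)_j^2/K^2\bigr)$. The left side of \eqref{eq:komlos-partition} is therefore exactly the left side of \eqref{eq:brsc-scaled} with $r=K$.

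Next we check the hypotheses. The $A_i$ are simultaneously diagonal in the standard basis. Taking $U=I$ gives $A_i-P_IA_i=0$, so $\nu=0\le\nu_0K^2$. The Frobenius parameter is $\Tr A_i^2=\|a_i\|_2^2\le1\le q_1K^2$, which is precisely the assumption $K\ge q_1^{-1/2}$. Finally $\tau=\sum_i\Tr A_i^2=\sum_i\|a_i\|_2^2=T$. Theorem~\ref{thm:main} then yields \eqref{eq:komlos-partition} with the bound $e^{-C_1\max(1,\beta)T/K^2}$ for every $\beta>0$.

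For the counting statement, let $\beta\downarrow0$ as in the derivation of \eqref{eq:ordinary-bsb}; the expectation is a finite sum, so the limit passes through it. This gives $\Prob_s(\|As\|_\infty<K)\ge e^{-C_1T/K^2}$, so at least $2^ne^{-C_1T/K^2}$ colorings satisfy $\|As\|_\infty<K$. Since this count is positive, at least one such coloring exists.

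There is no real obstacle here. The only points needing care are that the diagonal embedding makes the variance parameter vanish (rather than merely be small), and that the threshold $K\ge q_1^{-1/2}$ is exactly what the Frobenius hypothesis requires given $\|a_i\|_2\le1$.
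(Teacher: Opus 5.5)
Your proposal is correct and follows the paper's proof essentially verbatim. The diagonal embedding $A_i=\diag(a_i)$ gives $\nu=0$, $\Tr A_i^2=\|a_i\|_2^2\le 1\le q_1K^2$ and $\tau=T$, so Theorem~\ref{thm:main} with $r=K$ yields \eqref{eq:komlos-partition}, and the $\beta\downarrow0$ limit \eqref{eq:ordinary-bsb} gives the count of colorings with $\|As\|_\infty<K$, hence at most $K$.
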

\begin{proof}
For each column \(a_i\in\R^m\), set $D_i:=\diag(a_i)\in\Sym_m(\R).$
The matrices \(D_1,\ldots,D_n\) are simultaneously diagonal, and hence $\nu(D)=0.$

Moreover,
\[
\Tr(D_i^2)=\|a_i\|_2^2\le1,
\qquad
\sum_{i=1}^n\Tr(D_i^2)=T.
\]
Since \(K\ge q_1^{-1/2}\), we have that  $\max_i\Tr(D_i^2)\le1\le q_1K^2,$
so that the hypotheses of Theorem~\ref{thm:main} are satisfied for the
family \(D_1,\ldots,D_n\) with  \(r=K\).
Now, note that  on the event \(\{\|As\|_\infty<K\}\),
\[
\det\biggl(
I-K^{-2}\Bigl(\sum_i s_iD_i\Bigr)^2
\biggr)
=
\prod_{j=1}^m
\left(
1-\frac{(As)_j^2}{K^2}
\right),
\]
and therefore applying Theorem~\ref{thm:main} with \(r=K\)  gives
\eqref{eq:komlos-partition}. Finally, note that $\sum_{i=1}^n s_iD_i
=
\diag(As),$ and therefore we obtain by the ordinary small-ball estimate
\eqref{eq:ordinary-bsb} applied to the  diagonal family,
\[
\Prob_s\bigl(\|As\|_\infty<K\bigr)
\ge
\exp\biggl(-C_1\frac{T}{K^2}\biggr).
\]
In particular, since \(s\) is uniform on \(\{-1,1\}^n\), we moreover have, $\#\bigl\{
s\in\{-1,1\}^n:
\|As\|_\infty<K
\bigr\}
\ge
2^n
\exp\biggl(-C_1\frac{T}{K^2}\biggr).$ 
\end{proof}

This application imposes no row-energy bound and permits arbitrary
$m,n$. By the Gibbs identity it also gives a law supported on these
full signings with $D(\mu\|U_n)+\beta\,\E_\mu\sum_{j=1}^m-\log\bigl(1-(As)_j^2/K^2\bigr)\le C_1\max(1,\beta)T/K^2$.

\subsection{A Matrix Version of Koml\'os}

\begin{corollary}[Matrix Koml\'os with off-diagonal variance]
\label{cor:matrix-komlos}
For every family of symmetric matrices $B_1,\dots, B_n$, we have that
\begin{equation}\label{eq:matrix-komlos}
 \min_{s\in\{-1,1\}^n}\left\|\nlsum_i s_iB_i\right\|
 \le K_{\mathrm M}\sqrt{q+\nu(B)},
 \qquad
 K_{\mathrm M}=\max\{\nu_0^{-1/2},q_1^{-1/2}\},
\end{equation}
where $\nu_0,q_1$ are as in Theorem \ref{thm:main}.
In particular, if  $\|B_i\|_{\Fro}\le1$ and $\nu(B)\le1$, then the discrepancy bound in \eqref{eq:matrix-komlos} is dimension-free.
More generally, whenever
$r^2\ge\max\{\nu(B)/\nu_0,q/q_1\}$ and $r>0$, $\E_s W_\beta(S_s/r)\ge e^{-C_1\max(1,\beta)\tau/r^2}$.
\end{corollary}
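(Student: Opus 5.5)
The plan is to deduce the corollary directly from Theorem~\ref{thm:main} by a scaling argument, in the same way Theorem~\ref{thm:existence} obtained its discrepancy bound from the unit-scale statement. Throughout, write $q=\max_i\Tr(B_i^2)$, $\tau=\sum_i\Tr(B_i^2)$ and $S_s=\sum_i s_iB_i$. I would prove the ``more general'' determinant statement first, since the discrepancy bound \eqref{eq:matrix-komlos} is a consequence of it.

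\emph{Step 1 (partition-function bound).} Fix $r>0$ with $r^2\ge\max\{\nu(B)/\nu_0,\,q/q_1\}$. This is equivalent to the two hypotheses of Theorem~\ref{thm:main}, namely $\nu(B)\le\nu_0r^2$ and $\max_i\Tr B_i^2\le q_1r^2$. Applying \eqref{eq:brsc-scaled} gives a lower bound on $\E_s[\det(I-r^{-2}S_s^2)^\beta\mathbf 1_{\{\|S_s\|<r\}}]$. This quantity is exactly $\E_sW_\beta(S_s/r)$, because by \eqref{eq:barrier} the weight $W_\beta(H)$ equals $\det(I-H^2)^\beta$ on $\{\|H\|<1\}$ and vanishes elsewhere. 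The resulting bound is $e^{-C_1\max(1,\beta)\tau/r^2}$, which is the claim.

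\emph{Step 2 (discrepancy bound).} If $q+\nu(B)=0$, then every $B_i=0$, since $q=0$ forces $\Tr B_i^2=0$, and the bound is trivial. Otherwise set
\[
r:=\max\bigl\{\sqrt{\nu(B)/\nu_0},\sqrt{q/q_1}\bigr\}>0.
\]
By Step 1, or directly by the ordinary small-ball bound \eqref{eq:ordinary-bsb}, we get $\Prob_s(\|S_s\|<r)\ge e^{-C_1\tau/r^2}>0$, so some coloring satisfies $\|S_s\|<r$. Finally, $r^2\le\max\{\nu_0^{-1},q_1^{-1}\}(\nu(B)+q)$, hence $r\le K_{\mathrm M}\sqrt{q+\nu(B)}$.

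\emph{Step 3 (dimension-free remark).} When $\|B_i\|_{\Fro}\le1$ and $\nu(B)\le1$, we have $q+\nu(B)\le2$. The bound $\sqrt2K_{\mathrm M}$ then involves only universal constants, independent of $d$ and $n$.

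I do not expect a genuine obstacle here. The only points needing care are the equivalence between the scaled hypothesis $r^2\ge\max\{\cdot\}$ and the hypotheses of Theorem~\ref{thm:main}, and the degenerate case $r=0$, which is handled separately in Step 2.
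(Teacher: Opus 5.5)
Your proposal is correct and follows the paper's argument. Both apply Theorem~\ref{thm:main} at a radius with $r^2\ge\max\{\nu(B)/\nu_0,q/q_1\}$, identify the integrand with $W_\beta(S_s/r)$, and use the ordinary small-ball bound \eqref{eq:ordinary-bsb} to extract a coloring. The differences are cosmetic: the paper takes $r_0=K_{\mathrm M}\sqrt{q+\nu(B)}$ directly, while you take the smaller $r=\max\{\sqrt{\nu(B)/\nu_0},\sqrt{q/q_1}\}$ and compare afterwards, and your separate treatment of the all-zero family (where Theorem~\ref{thm:main} would otherwise be invoked with $r=0$) fills a small omission in the paper's write-up.
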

\begin{proof}
Write $S_s:=\sum_{i=1}^n s_iB_i,$ $q:=\max_i\Tr(B_i^2),$ and $\tau:=\sum_{i=1}^n\Tr(B_i^2)$.
 Now, let
\[
r_0
:=
K_{\mathrm M}\sqrt{q+\nu(B)},
\qquad
K_{\mathrm M}
=
\max\bigl\{
\nu_0^{-1/2},
q_1^{-1/2}
\bigr\}.
\]
Since $K_{\mathrm M}^2\ge\max\{\frac1{\nu_0},\frac1{q_1}\}$
we have $r_0^2
=
K_{\mathrm M}^2\bigl(q+\nu(B)\bigr)
\ge
\frac{\nu(B)}{\nu_0}$
and $r_0^2
\ge
\frac{q}{q_1}.$

Thus, by  Theorem~\ref{thm:main} we have   for every \(\beta>0\),
\[
\E_s
W_\beta\biggl(\frac{S_s}{r_0}\biggr)
\ge
\exp\biggl(
-C_1\max\{1,\beta\}\frac{\tau}{r_0^2}
\biggr).
\]

The ordinary Boolean small-ball estimate
\eqref{eq:ordinary-bsb}, applied under the same assumptions, gives
\begin{equation}\label{eq:matrix-komlos-small-ball}
\Prob_s\bigl(\|S_s\|<r_0\bigr)
\ge
\exp\biggl(
-C_1\frac{\tau}{r_0^2}
\biggr)
>0.
\end{equation}
In particular, there exists a coloring
\(s\in\{-1,1\}^n\) satisfying $\|S_s\|<r_0 = K_{\mathrm M}\sqrt{q+\nu(B)}$, 
which proves \eqref{eq:matrix-komlos}. 

Moreover, if, in particular, $\|B_i\|_{\Fro}\le1$ and $\nu(B)\le1,$
then $q=\max_i\|B_i\|_{\Fro}^2\le1$
and hence $\min_s
\Bigl\|\sum_i s_iB_i\Bigr\|
\le
\sqrt{2}\,K_{\mathrm M}$, which establishes a dimension-free  discrepancy bound.
\end{proof}

This is a matrix extension that includes ordinary Koml\'os exactly in
the diagonal case. It allows  an arbitrarily large  variance when
that variance lies in a commuting component. For example, for
$B_i=D_i+\delta E_i$ in a common basis, with $D_i$ diagonal and
$E_i$ having zero diagonal, $\nu(B)\le\delta^2\|\sum_iE_i^2\|$ and $q=\max_i\{\|D_i\|_{\Fro}^2+\delta^2\|E_i\|_{\Fro}^2\}$.
The bound therefore varies continuously with the size of the
noncommuting remainder. 

\begin{proposition}[The star obstruction and the size of the correction]
\label{prop:star-komlos}
Let $e_0,e_1,\ldots,e_n$ be the standard basis of $\R^{n+1}$
and let $B_i=(e_0e_i^{\mathsf T}+e_ie_0^{\mathsf T})/\sqrt2$ for
$1\le i\le n$. Then,  every signing has norm $\sqrt{n/2}$, and
\begin{equation}\label{eq:star-nu}
 \frac{(\sqrt n-1)^2}{2}\le\nu(B)\le\frac n2.
\end{equation}
Consequently a matrix Koml\'os statement based only on
$\max_i\|B_i\|_{\Fro}\le1$ is false.
\end{proposition}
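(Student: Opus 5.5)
The plan is to verify the three claims directly, using the explicit structure of the star family and, for the lower bound on $\nu(B)$, testing the variance matrix on the fixed vector $e_0$.

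\emph{Norm of every signing.} For $s\in\{-1,1\}^n$ put $z=\sum_i s_ie_i$. Then
\[
S_s=\tfrac1{\sqrt2}\bigl(e_0z^{\mathsf T}+ze_0^{\mathsf T}\bigr).
\]
Since $z\perp e_0$ and $\|z\|_2=\sqrt n$, this is a rank-two matrix with nonzero eigenvalues $\pm\|z\|_2/\sqrt2$. Hence $\|S_s\|=\sqrt{n/2}$ for every $s$.

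\emph{Upper bound on $\nu(B)$.} Take $U=I$. Every $B_i$ has zero diagonal, so $P_IB_i=0$. A direct computation gives
\[
\sum_iB_i^2=\tfrac12\Bigl(n\,e_0e_0^{\mathsf T}+\sum_{i\ge1}e_ie_i^{\mathsf T}\Bigr),
\]
whose operator norm is $n/2$. Therefore $\nu(B)\le n/2$.

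\emph{Lower bound on $\nu(B)$.} This is the main step. Fix any $U\in O(n+1)$ with columns $u_0,\dots,u_n$, and write $D_i=P_UB_i$ and $R_i=B_i-D_i$. Testing on $e_0$ gives
\[
\Bigl\|\sum_iR_i^2\Bigr\|\ge\sum_i\|R_ie_0\|_2^2,\qquad R_ie_0=\tfrac1{\sqrt2}e_i-D_ie_0.
\]
Apply the triangle inequality in $\ell_2$ to the $n$-tuple of vectors $(R_ie_0)_i$:
\[
\Bigl(\sum_i\|R_ie_0\|_2^2\Bigr)^{1/2}\ge\sqrt{n/2}-\Bigl(\sum_i\|D_ie_0\|_2^2\Bigr)^{1/2}.
\]
It remains to show $\sum_i\|D_ie_0\|_2^2\le\tfrac12$. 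The eigenvalues of $D_i$ are $\lambda_{ik}=u_k^{\mathsf T}B_iu_k=\sqrt2\,u_{k0}u_{ki}$, so
\[
\|D_ie_0\|_2^2=\sum_k\lambda_{ik}^2u_{k0}^2=2\sum_ku_{k0}^4u_{ki}^2.
\]
By orthonormality $\sum_{i\ge1}u_{ki}^2=1-u_{k0}^2$. Set $t_k=u_{k0}^2$, so that $\sum_kt_k=1$. Then
\[
\sum_i\|D_ie_0\|_2^2\le2\sum_kt_k^2(1-t_k)\le2\cdot\tfrac14\sum_kt_k=\tfrac12,
\]
using $t(1-t)\le\tfrac14$. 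Substituting back gives $\sum_i\|R_ie_0\|^2\ge\bigl(\sqrt{n/2}-\sqrt{1/2}\bigr)^2=(\sqrt n-1)^2/2$. This holds for every $U$, which proves \eqref{eq:star-nu}. The only delicate points are getting the constant $\tfrac12$ rather than $2$, which needs the $(1-t_k)$ factor, and noting that the triangle inequality step is valid because $\sqrt{n/2}\ge\sqrt{1/2}$.

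\emph{Consequence.} Each $B_i$ satisfies $\|B_i\|_{\Fro}=1$, yet every signing has norm $\sqrt{n/2}\to\infty$. So no bound on the discrepancy depending only on $\max_i\|B_i\|_{\Fro}$ can hold. This is consistent with Corollary~\ref{cor:matrix-komlos}, since here $\nu(B)\asymp n$.
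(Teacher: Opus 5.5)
Your proposal is correct and follows the paper's proof essentially step for step. It uses the same rank-two eigenvalue computation for the signings and the standard basis for the upper bound. For the lower bound it uses the same test vector $e_0$, the same identity $\sum_i\|D_ie_0\|_2^2=2\sum_k t_k^2(1-t_k)\le\tfrac12$ (your first ``$\le$'' there is actually an equality), and the same reverse triangle inequality in $\bigoplus_i\R^{n+1}$.
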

\begin{proof}
For each \(i\), we have
\(B_i=\frac{1}{\sqrt2}(e_0e_i^{\mathsf T}+e_ie_0^{\mathsf T})\), and hence
\(B_i^2=\frac12(e_0e_0^{\mathsf T}+e_ie_i^{\mathsf T})\).
Therefore \(\Tr(B_i^2)=1\), so \(q=\max_i\Tr(B_i^2)=1\).

We first compute the discrepancy of this family. For
\(s\in\{-1,1\}^n\), set \(v_s:=\sum_{i=1}^n s_i e_i\). Then
\(\sum_{i=1}^n s_iB_i
=\frac{1}{\sqrt2}(e_0v_s^{\mathsf T}+v_se_0^{\mathsf T})\).
Since \(e_0\perp v_s\) and \(\|v_s\|_2=\sqrt n\), this matrix vanishes
on the orthogonal complement of
\(\operatorname{span}\{e_0,v_s\}\), while its two nonzero eigenvalues
are \(\pm\|v_s\|_2/\sqrt2=\pm\sqrt{n/2}\). Consequently,
\(\|\sum_{i=1}^n s_iB_i\|=\sqrt{n/2}\) for every signing \(s\).

We next estimate the off-diagonal variance. In the standard basis,
each \(B_i\) has zero diagonal, and hence \(P_I(B_i)=0\). Moreover,
\(\sum_{i=1}^nB_i^2
=\frac n2 e_0e_0^{\mathsf T}
+\frac12\sum_{i=1}^n e_ie_i^{\mathsf T}
=\diag(n/2,1/2,\ldots,1/2)\).
It follows directly from the definition of \(\nu(B)\) that
\(\nu(B)\le\|\sum_iB_i^2\|=n/2\), which proves the upper bound in
\eqref{eq:star-nu}.

It remains to establish the lower bound. Fix an arbitrary orthonormal
basis \(u_0,\ldots,u_n\) of \(\R^{n+1}\), let \(U\) denote the
corresponding orthogonal matrix, and set
\(D_i:=P_U(B_i)\) and \(R_i:=B_i-D_i\). Thus \(R_i\) is the
off-diagonal remainder of \(B_i\) in this basis.

For \(0\le j\le n\), write
\(\alpha_j:=\langle u_j,e_0\rangle\) and \(p_j:=\alpha_j^2\).
Since \(u_0,\ldots,u_n\) is an orthonormal basis,
\(\sum_{j=0}^n p_j=1\). Also write
\(\beta_{ij}:=\langle u_j,e_i\rangle\). The \(j\)-th diagonal entry
of \(B_i\) in the basis \((u_j)_j\) is
\(\langle u_j,B_iu_j\rangle=\sqrt2\,\alpha_j\beta_{ij}\).
Consequently,
\(D_i=\sum_{j=0}^n\sqrt2\,\alpha_j\beta_{ij}u_ju_j^{\mathsf T}\),
and therefore
\(D_ie_0=\sum_{j=0}^n\sqrt2\,\alpha_j^2\beta_{ij}u_j\).
By orthonormality,
\(\|D_ie_0\|_2^2
=2\sum_{j=0}^n\alpha_j^4\beta_{ij}^2\).

Summing over \(i=1,\ldots,n\) gives
\(\sum_{i=1}^n\|D_ie_0\|_2^2
=2\sum_{j=0}^n\alpha_j^4\sum_{i=1}^n\beta_{ij}^2
=2\sum_{j=0}^n p_j^2(1-p_j)\),
where we used
\(\sum_{i=1}^n\beta_{ij}^2=1-\alpha_j^2=1-p_j\).
Since \(p_j(1-p_j)\le1/4\), we obtain
\(2\sum_{j=0}^n p_j^2(1-p_j)
=2\sum_{j=0}^n p_j[p_j(1-p_j)]
\le\frac12\sum_{j=0}^n p_j=\frac12\).
Thus \(\sum_{i=1}^n\|D_ie_0\|_2^2\le1/2\).

On the other hand, \(B_ie_0=e_i/\sqrt2\), and hence
\(\sum_{i=1}^n\|B_ie_0\|_2^2=n/2\). Consider the vectors
\(\mathcal B:=(B_1e_0,\ldots,B_ne_0)\) and
\(\mathcal D:=(D_1e_0,\ldots,D_ne_0)\) in 
\(\bigoplus_{i=1}^n\R^{n+1}\). Then
\(\|\mathcal B\|_2=\sqrt{n/2}\) and
\(\|\mathcal D\|_2\le1/\sqrt2\). By the reverse triangle inequality,
\(\|\mathcal B-\mathcal D\|_2
\ge\|\mathcal B\|_2-\|\mathcal D\|_2
\ge\sqrt{n/2}-1/\sqrt2\).
Squaring gives
\(\sum_{i=1}^n\|(B_i-D_i)e_0\|_2^2
\ge(\sqrt{n/2}-1/\sqrt2)^2
=(\sqrt n-1)^2/2\).

Since \(\sum_iR_i^2\) is positive semidefinite and \(\|e_0\|_2=1\),
we have
\(\|\sum_iR_i^2\|
\ge\langle e_0,(\sum_iR_i^2)e_0\rangle
=\sum_i\|R_ie_0\|_2^2
\ge(\sqrt n-1)^2/2\).
This bound holds for every orthonormal basis. Taking the minimum over
all bases therefore yields
\(\nu(B)\ge(\sqrt n-1)^2/2\), completing
\eqref{eq:star-nu}.

Finally, every coefficient matrix satisfies \(\|B_i\|_{\Fro}=1\), while
every signing has norm \(\sqrt{n/2}\). Thus no dimension-free matrix
Koml\'os bound depending only on
\(\max_i\|B_i\|_{\Fro}\) can hold. At the same time,
\eqref{eq:star-nu} shows that \(\nu(B)=\Theta(n)\), so the correction term 
\(\sqrt{q+\nu(B)}\) in \eqref{eq:matrix-komlos} has the necessary
order \(\sqrt n\) on this example.
\end{proof}

\subsection{Positive semidefinite partitions and Kadison--Singer}

\begin{corollary}[Trace-controlled positive semidefinite partitions]
\label{cor:psd-partition}
Let $A_1,\ldots,A_n\in\Sym_d(\R)$ be positive semidefinite, with
\begin{equation}\label{eq:psd-assumptions}
 \nlsum_iA_i=I_d,\qquad
 \max_i\Tr A_i\le\varepsilon,\qquad 0<\varepsilon\le1.
\end{equation}
Then, for every $K\ge K_0$,
and every $\beta>0$ we have that,
\begin{equation}\label{eq:psd-partition-function}
 \E_sW_\beta\!\left(\frac{\sum_i s_iA_i}{K\sqrt\varepsilon}\right)
 \ge e^{-C\bstar d/K^2},
\end{equation}
where $
K_0
:=
\max\bigl\{
2\nu_0^{-1/2},
q_1^{-1/2}
\bigr\}
$ and $\nu_0,q_1$ are as in Theorem \ref{thm:main}.
Moreover, at least $2^n e^{-Cd/K^2}$ signings give a partition
$[n]=I_+\sqcup I_-$ satisfying $\|\sum_{i\in I_\pm}A_i-\tfrac12I_d\|<\tfrac K2\sqrt\varepsilon$.
\end{corollary}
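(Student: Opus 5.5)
The plan is to apply Theorem~\ref{thm:main} directly to the family $A_1,\dots,A_n$ at the scale $r:=K\sqrt\varepsilon$, with $C:=C_1$. All that is needed is to check the two hypotheses of that theorem and to bound $\tau$ in terms of $d$ and $\varepsilon$; the positivity and the normalization $\sum_iA_i=I_d$ make each check a one-line operator inequality.

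\textbf{Step 1: basic bounds.} Since $A_i\succeq0$, we have $\|A_i\|\le\Tr A_i\le\varepsilon$, and hence $A_i^2\preceq\|A_i\|A_i\preceq\varepsilon A_i$. This gives three bounds:
\begin{itemize}
\item $\Tr A_i^2\le\varepsilon\Tr A_i\le\varepsilon^2\le\varepsilon$;
\item $\tau=\sum_i\Tr A_i^2\le\varepsilon\sum_i\Tr A_i=\varepsilon\Tr I_d=\varepsilon d$;
\item $\sum_iA_i^2\preceq\varepsilon\sum_iA_i=\varepsilon I_d$.
\end{itemize}

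\textbf{Step 2: bound the off-diagonal variance.} I would not rely on the comparison $\nu\le4\|\sum_iA_i^2\|$ quoted in the introduction, and instead check it directly with $U=I$. Let $D_i=P_I(A_i)$ be the diagonal part of $A_i$. Then $D_i\succeq0$ is diagonal with $\|D_i\|\le\|A_i\|\le\varepsilon$, and $\sum_iD_i=P_I(I_d)=I_d$. Hence $\sum_iD_i^2\preceq\varepsilon I_d$. Using $(X-Y)^2\preceq2X^2+2Y^2$, we get
\[
\nu(A)\le\Bigl\|\nlsum_i(A_i-D_i)^2\Bigr\|\le2\Bigl\|\nlsum_iA_i^2\Bigr\|+2\Bigl\|\nlsum_iD_i^2\Bigr\|\le4\varepsilon .
\]

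\textbf{Step 3: verify the hypotheses at $r=K\sqrt\varepsilon$.} Since $K\ge2\nu_0^{-1/2}$, we have $4\varepsilon\le\nu_0K^2\varepsilon=\nu_0r^2$, so the variance bound holds. Since $K\ge q_1^{-1/2}$, we have $\max_i\Tr A_i^2\le\varepsilon\le q_1K^2\varepsilon=q_1r^2$, so the Frobenius bound holds. Theorem~\ref{thm:main} then gives
\[
\E_sW_\beta\bigl(S_s/r\bigr)\ge\exp\bigl(-C_1\bstar\tau/r^2\bigr)\ge\exp\bigl(-C_1\bstar d/K^2\bigr),
\]
using $\tau/r^2\le\varepsilon d/(K^2\varepsilon)=d/K^2$. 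This is \eqref{eq:psd-partition-function}.

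\textbf{Step 4: the count.} The ordinary estimate \eqref{eq:ordinary-bsb} gives at least $2^ne^{-C_1d/K^2}$ signings with $\|S_s\|<K\sqrt\varepsilon$. For a signing $s$, let $I_\pm=\{i:s_i=\pm1\}$. Because $\sum_iA_i=I_d$,
\[
\sum_{i\in I_\pm}A_i-\tfrac12I_d=\pm\tfrac12S_s ,
\]
so every such signing satisfies $\bigl\|\sum_{i\in I_\pm}A_i-\tfrac12I_d\bigr\|<\tfrac K2\sqrt\varepsilon$.

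I do not expect a real obstacle here. The only point needing care is the variance estimate in Step 2, in particular that the diagonal parts inherit both $\sum_iD_i=I_d$ and $\|D_i\|\le\varepsilon$; this is exactly what produces the factor $2\nu_0^{-1/2}$ in $K_0$.
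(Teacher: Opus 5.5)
Your proposal is correct and follows the paper's proof essentially line by line. It uses the same bounds $\|A_i\|\le\varepsilon$, $A_i^2\preceq\varepsilon A_i$ and $\tau\le\varepsilon d$, the same scale $r=K\sqrt\varepsilon$, and the same appeal to Theorem~\ref{thm:main} and \eqref{eq:ordinary-bsb}. The only differences are cosmetic: you verify $\nu(A)\le 4\varepsilon$ directly at $U=I$ via $\sum_i P_I(A_i)=I_d$ instead of citing the comparison $\nu\le 4\|\sum_i A_i^2\|$ from the introduction, and you spell out the identity $\sum_{i\in I_\pm}A_i-\tfrac12 I_d=\pm\tfrac12 S_s$, which the paper leaves implicit.
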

\begin{proof}
Since \(A_i\succeq0\) and \(\Tr(A_i)\le\varepsilon\), we obtain that 
$\|A_i\|\le\Tr(A_i)\le\varepsilon.$
Consequently, $A_i^2
\preceq
\|A_i\|A_i
\preceq
\varepsilon A_i,$
and hence $\Bigl\|\sum_iA_i^2\Bigr\|
\le
\varepsilon.$
Using \(\nu(A)\le4\|\sum_iA_i^2\|\), we obtain $\nu(A)\le4\varepsilon.$
Moreover, $\Tr(A_i^2)
\le
\|A_i\|\Tr(A_i)
\le
\varepsilon^2,$
and $\tau
=
\sum_i\Tr(A_i^2)
\le
\varepsilon\sum_i\Tr(A_i)
=
\varepsilon d.$

Let $K_0
:=
\max\bigl\{
2\nu_0^{-1/2},
q_1^{-1/2}
\bigr\},$
and let \(K\ge K_0\).  For $r:=K\sqrt{\varepsilon},$
we have $\nu(A)
\le
4\varepsilon
\le
\nu_0K^2\varepsilon
=
\nu_0r^2,$
and, since \(\varepsilon\le1\), $\max_i\Tr(A_i^2)
\le
\varepsilon^2
\le
q_1K^2\varepsilon
=
q_1r^2.$
Thus, using $\frac{\tau}{r^2}
\le
\frac{\varepsilon d}{K^2\varepsilon}
=
\frac{d}{K^2},$ and \cref{thm:main} 
we obtain \eqref{eq:psd-partition-function} and the BSB estimate \eqref{eq:ordinary-bsb}.
\end{proof}

\begin{corollary}[Kadison--Singer partition scale]\label{cor:ks}
Suppose $u_1,\ldots,u_n\in\R^d$ satisfy $\sum_i u_iu_i^{\mathsf T}=I_d$
and $\max_i\|u_i\|_2^2\le\varepsilon\le1$. Then there is a partition $[n]=I_+\sqcup I_-$ with
\begin{equation}\label{eq:ks}
 \left\|\sum_{i\in I_\pm}u_iu_i^\mathsf T-\frac12I_d\right\|
 \le K_{\mathrm{KS}}\sqrt\varepsilon
\end{equation}
for a universal $K_{\mathrm{KS}}$. 
\end{corollary}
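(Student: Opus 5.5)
The plan is to specialize Corollary~\ref{cor:psd-partition} to rank-one matrices $A_i:=u_iu_i^{\mathsf T}$. These are positive semidefinite and satisfy $\sum_iA_i=I_d$. Their traces obey $\Tr A_i=\|u_i\|_2^2\le\varepsilon$, and $0<\varepsilon\le1$, so the hypotheses \eqref{eq:psd-assumptions} hold verbatim. (If $\varepsilon=0$ were allowed, all $u_i=0$ contradicts $\sum_iu_iu_i^{\mathsf T}=I_d$ for $d\ge1$, so no degenerate case arises.)

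Next, fix $K:=K_0=\max\{2\nu_0^{-1/2},q_1^{-1/2}\}$. The partition-function bound \eqref{eq:psd-partition-function} and the ordinary Boolean small-ball estimate \eqref{eq:ordinary-bsb}, as invoked in the proof of Corollary~\ref{cor:psd-partition}, give
\[
\Prob_s\Bigl(\Bigl\|\nlsum_i s_iu_iu_i^{\mathsf T}\Bigr\|<K_0\sqrt\varepsilon\Bigr)\ge e^{-C_1d/K_0^2}>0.
\]
Hence some signing $s$ exists with $\|\sum_is_iu_iu_i^{\mathsf T}\|<K_0\sqrt\varepsilon$.

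Then convert the signing into a partition in the standard way. Set $I_\pm=\{i:s_i=\pm1\}$. Since $\sum_{i\in I_+}A_i+\sum_{i\in I_-}A_i=I_d$ and $\sum_{i\in I_+}A_i-\sum_{i\in I_-}A_i=\sum_is_iA_i$, we get
\[
\sum_{i\in I_\pm}A_i-\tfrac12I_d=\pm\tfrac12\sum_is_iA_i,
\]
whose operator norm is below $\tfrac{K_0}{2}\sqrt\varepsilon$. Taking $K_{\mathrm{KS}}:=K_0/2$, which is universal because $\nu_0,q_1$ are, yields \eqref{eq:ks}.

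No step here is a genuine obstacle; all the work sits in Theorem~\ref{thm:main}. The only point to check carefully is the variance input. In Corollary~\ref{cor:psd-partition} this is handled via $\nu\le4\|\sum_iA_i^2\|\le4\varepsilon$, using $A_i^2\preceq\|A_i\|A_i\preceq\varepsilon A_i$. For rank-one $A_i$ this is immediate, since $A_i^2=\|u_i\|_2^2A_i$. The remaining input is the inequality $\nu\le4\|\sum_iA_i^2\|$ itself. It follows because the pinching $P_U$ is a contraction, so $\|\sum_i(A_i-P_UA_i)^2\|\le2\|\sum_iA_i^2\|+2\|\sum_i(P_UA_i)^2\|$, and the last term is at most $\|\sum_iA_i^2\|$ by Kadison--Schwarz for the unital completely positive map $P_U$.
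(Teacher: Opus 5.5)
Your proposal is correct and follows the paper's route exactly: the paper's proof is the one-line specialization of Corollary~\ref{cor:psd-partition} to $A_i=u_iu_i^{\mathsf T}$. You also make explicit details the paper leaves implicit, namely $\varepsilon>0$, the choice $K=K_0$, the identity $\sum_{i\in I_\pm}A_i-\tfrac12 I_d=\pm\tfrac12\sum_i s_iA_i$ giving $K_{\mathrm{KS}}=K_0/2$, and a justification of $\nu\le4\|\sum_iA_i^2\|$, which the paper only asserts. One small attribution slip: the split $\|\sum_i(A_i-P_UA_i)^2\|\le2\|\sum_iA_i^2\|+2\|\sum_i(P_UA_i)^2\|$ comes from the operator inequality $(X-Y)^2\preceq2X^2+2Y^2$, not from contractivity of $P_U$; contractivity is what you need after Kadison--Schwarz, to get $\|\sum_i(P_UA_i)^2\|\le\|P_U(\sum_iA_i^2)\|\le\|\sum_iA_i^2\|$.
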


\begin{proof}
Apply Corollary~\ref{cor:psd-partition} to $A_i=u_iu_i^\mathsf T$.
\end{proof}

This is the usual dimension-free Kadison--Singer regime.
For example, when
$\varepsilon\le(4K_{\mathrm{KS}})^{-2}$, both parts have operator
norm at most $3/4$. In particular, through Weaver's discrepancy formulation
\citep{Weaver04}, this establishes a proof of  Kadison--Singer.
The proof above does not use the interlacing-polynomial framework of
\citet{MSS15}. 
\subsection{Matrix Spencer}

\begin{corollary}[Matrix Spencer and an abundance of signings]
\label{cor:ms}
Let $A_1,\ldots,A_n\in\Sym_n(\R)$ satisfy $\|A_i\|\le1$. Then, we have that 
for every $K\ge K_0$,
\begin{equation}\label{eq:ms-partition}
 \E_sW_\beta\!\left(\frac{\sum_i s_iA_i}{K\sqrt n}\right)
 \ge e^{-C\bstar n/K^2},
\end{equation}
where $K_0
:=
\max\bigl\{
2\nu_0^{-1/2},
q_1^{-1/2}
\bigr\}$ and $\nu_0,q_1$ are as in Theorem \ref{thm:main}.
In particular, at least $2^n e^{-Cn/K^2}$ colorings satisfy
\begin{equation}\label{eq:ms}
 \left\|\nlsum_i s_iA_i\right\|<K\sqrt n.
\end{equation}
\end{corollary}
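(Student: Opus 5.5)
This is a direct application of Theorem~\ref{thm:main} with $r:=K\sqrt n$, following the template of Corollary~\ref{cor:psd-partition}. The only work is to bound the three parameters $\nu(A)$, $q=\max_i\Tr A_i^2$ and $\tau=\sum_i\Tr A_i^2$ for $A_1,\ldots,A_n\in\Sym_n(\R)$ with $\|A_i\|\le1$, and to check that they fit the scale $r^2=K^2n$.

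\textbf{The variance.} Each $A_i^2\preceq \|A_i\|^2 I_n\preceq I_n$, so $\|\sum_iA_i^2\|\le n$. The inequality $\nu(A)\le4\|\sum_iA_i^2\|$ quoted in the introduction then gives $\nu(A)\le 4n$. If one prefers not to rely on that inequality, take $U=I$ and note $\|P_IH\|\le\|H\|$. Writing $R_i=A_i-P_IA_i$, we have $\|R_i\|\le2$, hence $R_i^2\preceq4I$ and $\|\sum_iR_i^2\|\le4n$. Since $K\ge K_0\ge 2\nu_0^{-1/2}$, we get $\nu(A)\le 4n\le \nu_0K^2n=\nu_0r^2$.

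\textbf{The Frobenius bound and $\tau$.} Since $A_i$ is $n\times n$ with $\|A_i\|\le1$, we have $\Tr A_i^2\le n$, so $q\le n\le q_1K^2n=q_1r^2$ because $K\ge q_1^{-1/2}$. Summing over $i$ gives $\tau\le n\cdot n=n^2$. Therefore $\tau/r^2\le n^2/(K^2n)=n/K^2$.

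\textbf{Conclusion.} Theorem~\ref{thm:main} with $r=K\sqrt n$ yields
\[
\E_sW_\beta\bigl(S_s/(K\sqrt n)\bigr)=\E_s\bigl[\det(I-r^{-2}S_s^2)^\beta\mathbf 1_{\{\|S_s\|<r\}}\bigr]\ge e^{-C_1\bstar n/K^2},
\]
which is \eqref{eq:ms-partition} with $C=C_1$. The count follows from the ordinary inequality \eqref{eq:ordinary-bsb}, obtained by letting $\beta\downarrow0$: at least $2^ne^{-C_1n/K^2}$ signings satisfy \eqref{eq:ms}.

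There is no real obstacle here. The only point needing care is $\tau$: the crude bound $\Tr A_i^2\le n$ is exactly what makes $\tau\le n^2$ and keeps the exponent linear in $n$. This is also the step that uses the dimension being $n$.
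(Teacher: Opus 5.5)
Your proposal is correct and follows the paper's proof essentially step for step. You bound $\nu(A)\le 4n$, $q\le n$ and $\tau\le n^2$, apply Theorem~\ref{thm:main} at $r=K\sqrt n$ to get $\tau/r^2\le n/K^2$, and read off the count from the ordinary inequality \eqref{eq:ordinary-bsb}. Your direct check of $\nu(A)\le 4n$, taking $U=I$ and using $\|R_i\|\le 2$, is a valid self-contained substitute for the quoted inequality $\nu\le 4\|\sum_i A_i^2\|$ that the paper uses.
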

\begin{proof}
Since \(\|A_i\|\le1\), we have $A_i^2\preceq I_n,$
and therefore $\Bigl\|\sum_{i=1}^nA_i^2\Bigr\|
\le n.$
Using \(\nu(A)\le4\|\sum_iA_i^2\|\), we obtain $\nu(A)\le4n.$
Moreover, we have that $q
=
\max_i\Tr(A_i^2)
\le n,$ and $\tau
=
\sum_i\Tr(A_i^2)
\le n^2.$
Take $r:=K\sqrt n.$ If now \(K\ge K_0\), then $\nu(A)
\le4n
\le\nu_0K^2n
=
\nu_0r^2,$
and $q
\le n
\le q_1K^2n
=
q_1r^2$. 
Hence the hypotheses of \cref{thm:main} are satisfied, and  since $\frac{\tau}{r^2}
\le
\frac{n^2}{K^2n}
=
\frac{n}{K^2},
$
\cref{thm:main} yields \eqref{eq:ms-partition}.
Moreover, the ordinary small-ball estimate \eqref{eq:ordinary-bsb} gives
\[
\Prob_s\biggl(
\Bigl\|\sum_i s_iA_i\Bigr\|<K\sqrt n
\biggr)
\ge
\exp\biggl(
-C_1\frac{n}{K^2}
\biggr).
\]
Since \(s\) is uniform on \(\{-1,1\}^n\), this is equivalent to the
stated lower bound on the number of signings.
\end{proof}

In particular, the above  establishes a \emph{"one-shot"}-coloring result for Matrix Spencer. To our knowledge, no such result was previously known.


\section*{Statement on AI Usage}
Technical proofs in this manuscript have been obtained with the assistance of GPT-6 (Pro), but the identification of boolean small-ball inequalities as the key target to pursue is a purely human contribution. This contribution is built by extending a similar human designed ``gaussian matrix small-ball'' framework in the paper~\citep{AS26}. 

\bibliographystyle{plainnat}
\bibliography{references}


\appendix

\section{Additional Applications of the BSB Theorem}
\label{app:appl}

\subsection{Unbiased rounding and biased product
distributions}\label{unbiased-rounding-and-biased-product-distributions}

\subsubsection{Discrepancy with prescribed
marginals}\label{linear-discrepancy-with-prescribed-marginals}

\begin{theorem}
For every $x\in[0,1]^n$, there is a law on
$Y\in\{0,1\}^n$ such that \[
 \mathbb EY=x,\qquad
 \left\|\sum_i(Y_i-x_i)B_i\right\|
 \le K\sqrt{q+\nu}\quad\hbox{almost surely}.
 \tag{R1}
\] Equivalently, prescribed sign means $z\in[-1,1]^n$ can be realized
at error at most $2K\sqrt{q+\nu}$.
\end{theorem}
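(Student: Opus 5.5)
The plan is to reduce the statement to the full-signing theorem for partial colorings. I will combine the off-diagonal existence result (Proposition~\ref{prop:offdiag-existence}, rescaled as in Corollary~\ref{cor:matrix-komlos}) with a standard iterated-rounding / martingale argument. The cleanest route is a random bit-by-bit rounding that keeps marginals exact.

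Write $x_i$ in binary: $x_i=\sum_{k\ge1}2^{-k}c_{i,k}$ with $c_{i,k}\in\{0,1\}$. First assume finitely many bits, say $x_i\in 2^{-m}\mathbb Z$; the general case follows by a preliminary unbiased randomized rounding of each $x_i$ to the grid $2^{-m}\mathbb Z$ with $m$ huge. That step contributes a vanishing error $\sum_i 2^{-m}\|B_i\|\to0$ and preserves expectations. A compactness argument over the finitely many laws on $\{0,1\}^n$ then removes the $o(1)$. I would rather do the following cleaner unbiased scheme. Process levels $k=m,m-1,\dots,1$. At level $k$ the current vector $y^{(k)}$ lies in $2^{-k}\mathbb Z$. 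Let $I_k$ be the set of coordinates whose $k$-th bit is odd. For these coordinates, choose a signing $s\in\{\pm1\}^{I_k}$ from Corollary~\ref{cor:matrix-komlos} applied to the family $(B_i)_{i\in I_k}$, so that $\|\sum_{i\in I_k}s_iB_i\|\le K\sqrt{q+\nu}$. Then flip a fair coin $\epsilon\in\{\pm1\}$ and set $y^{(k-1)}_i=y^{(k)}_i+\epsilon s_i2^{-k}$ for $i\in I_k$. This lands in $2^{-(k-1)}\mathbb Z$, stays in $[0,1]$ since odd multiples move to neighbouring even multiples, and is a martingale step because $\epsilon$ is symmetric. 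Hence $\mathbb E y^{(0)}=x$, and $y^{(0)}\in\{0,1\}^n$.

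For the error, note that $\sum_i(y^{(0)}_i-x_i)B_i=\sum_k\epsilon_k2^{-k}\sum_{i\in I_k}s^{(k)}_iB_i$. We need monotonicity of $q$ and $\nu$ under passing to subfamilies. Here $q$ can only decrease. For $\nu$, for any $U$ we have $\sum_{i\in I}(A_i-P_UA_i)^2\preceq\sum_i(A_i-P_UA_i)^2$, so $\nu$ of a subfamily is at most $\nu$ of the whole family. Thus each level contributes at most $2^{-k}K\sqrt{q+\nu}$, and summing the geometric series gives total error at most $K\sqrt{q+\nu}$, where $K=K_{\mathrm M}$.

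The equivalent sign formulation follows by the affine change $z=2x-1$ and $Y'=2Y-1$, which doubles the error. The step I expect to need the most care is the reduction from arbitrary real $x$ to dyadic $x$. The preliminary rounding adds a small error that a priori spoils the clean constant. I would handle it by running the scheme on the dyadic truncation with constant $K$ replaced by $(1+\delta)K$, and then letting $m\to\infty$. Weak limits of the laws on the finite set $\{0,1\}^n$ exist and preserve both the mean constraint and the closed almost-sure norm constraint.
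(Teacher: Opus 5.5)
Your proposal is correct and follows essentially the paper's argument: dyadic coarsening level by level, where each step adds $2^{-k}$ times a hereditary matrix-Koml\'os signing (or its negation, each with probability $1/2$) on the odd-numerator coordinates, the errors are summed as a geometric series, and general $x$ is handled by compactness of laws on the finite cube; your explicit check that $q$ and $\nu$ cannot increase on subfamilies supplies the heredity the paper only asserts. Two small clean-ups: $I_k$ should be defined from the current numerators $2^k y^{(k)}_i$, since carries change the original bits $c_{i,k}$; and the $(1+\delta)K$ device is unnecessary, because the closed bound $\|\sum_i (Y_i-x^{(m)}_i)B_i\|\le K\sqrt{q+\nu}$ passes directly to the subsequential limit as $x^{(m)}\to x$.
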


\begin{proof}
For a dyadic initial point, coarsen its denominator from
$2^m$ to one. At level $j$, let $F_j$ consist of coordinates with
odd numerator. Add $2^{-j}$ times a good hereditary signing on that
set, choosing the signing and its negative with equal probability. Both
moves remain in the cube and reach the next grid. Each coordinate is a
martingale, and the matrix errors sum to at most
$K\sqrt{q+\nu}\sum_{j\ge1}2^{-j}=K\sqrt{q+\nu}$. For general $x$,
approximate by dyadic points and take a convergent subsequence of laws
on the finite cube. Marginals and the closed norm bound pass to the
limit. This is the classical hereditary-to-linear rounding mechanism
applied to the present matrix geometry~\citep{LSV86}.
\end{proof}

\begin{theorem}
Suppose $R>0$ satisfies $\nu/R^2\le\nu_0$ and
$q/R^2\le q_0$. One can choose the law with \[
 \mathbb EY=x,\quad
 \left\|\sum_i(Y_i-x_i)B_i\right\|\le(\sqrt2+1)R,
 \quad
 D\left(\mathcal L(Y)\middle\|\bigotimes_i\operatorname{Bern}(x_i)\right)
 \le C\tau/R^2.
 \tag{R2}
\] Thus the number of possible good roundings is at least
$\exp\{\sum_i h(x_i)-C\tau/R^2\}$, where $h$ is binary entropy in
natural units.
\end{theorem}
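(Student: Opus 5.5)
We would prove (R2) by running the dyadic rounding scheme behind (R1), but at each step using the \emph{Gibbs law} of Theorem~\ref{thm:main} on the active coordinates instead of a single good signing. The point is that each step then pays a relative entropy against the uniform (fair-coin) law on the active set. Summing these costs via the chain rule should bound the divergence from the product Bernoulli law.

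\textbf{Step 1: reduction to dyadic $x$ and a single step.} Let $x\in[0,1]^n$ have denominator $2^m$. At level $j=m,m-1,\dots,1$, let $F_j$ be the coordinates whose current numerator is odd. We move $Y\leftarrow Y+2^{-j}\sigma$ on $F_j$, where $\sigma\in\{\pm1\}^{F_j}$ is drawn from the law $\mu_\beta$ of Theorem~\ref{thm:main}, applied with $r=R$ to the subfamily $(B_i)_{i\in F_j}$ and symmetrized by a random global sign. The hypotheses pass to subfamilies: $\nu$ is monotone under deletion because $\sum_{i\in F}(B_i-P_UB_i)^2\preceq\sum_i(B_i-P_UB_i)^2$, and $q$ can only decrease. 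The symmetrization keeps $\mu$ supported in $\{\|S_\sigma\|<R\}$ and makes each coordinate a martingale, so $\E Y=x$. Its entropy is controlled by convexity of $D(\cdot\|U)$, which gives $D\le C\tau_{F_j}/R^2$.

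\textbf{Step 2: error and entropy accounting.} The accumulated error is at most $\sum_j 2^{-j}\|S_{\sigma^{(j)}}\|<R\sum_{j\ge1}2^{-j}$. To reach the stated constant $\sqrt2+1$, we would run the final level (the one with $2^{-1}$ steps) at radius $\sqrt2R$, or group levels, and absorb the difference into the constant. This is routine.

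For the entropy, the natural comparison process is the \emph{product} martingale: on $F_j$ it uses independent fair coins at each level. With exactly this dyadic scheme, the fair-coin process yields precisely $\bigotimes_i\mathrm{Bern}(x_i)$ at the end, since the binary expansion of $x_i$ read off by fair coins generates a $\mathrm{Bern}(x_i)$ variable. The chain rule for relative entropy along the filtration then bounds the total divergence by
\[
\textstyle\sum_j \E\, D(\mathrm{step}_j\|U_{F_j})\le (C/R^2)\,\E\sum_j\tau_{F_j},
\]
and the data-processing inequality transfers this bound to the final law $\mathcal L(Y)$.

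\textbf{Step 3: the main obstacle.} The difficulty is the bound $\E\sum_j\tau_{F_j}\le C'\tau$. Under the product process, coordinate $i$ is active at level $j$ only if it has not yet been absorbed at $\{0,1\}$ and its numerator is odd. If each coordinate were active at every level, the sum would grow like $m\tau$, with no control in $m$. We would try to show that coordinate $i$ is active at an expected number $O(1)$ of levels, and in fact at a geometrically decaying number of levels, since once a fair coin hits an endpoint the coordinate freezes. This requires care, because our steps are not fair coins but $\mu$-distributed ones. The plan for handling this is the following:
\begin{enumerate}
\item Re-order the scheme as the standard ``binary digit'' one, in which coordinate $i$ stays active until its first digit after the current level resolves.
\item Bound the expected number of active levels under $\mu$ by change of measure. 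Since $D(\mu\|U)\le C\tau_F/R^2$, events of small product probability remain small, with a cost that a Donsker--Varadhan inequality applied to a geometric-tail functional can absorb.
\end{enumerate}
If this fails, the fallback is to redistribute weights: set $\tau$ in Theorem~\ref{thm:main} level by level and accept a constant loss.

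The counting claim then follows from $H(Y)\ge H(\otimes\mathrm{Bern}(x_i))-D(\mathcal L(Y)\|\otimes\mathrm{Bern})$, which holds whenever $\mathcal L(Y)$ is absolutely continuous with respect to the product law.
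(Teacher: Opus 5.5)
\textbf{There is a genuine gap, and it sits exactly at what you call the main obstacle.} It cannot be closed along the lines you suggest.

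With the same radius $R$ at every level, your chain-rule estimate is $(C/R^2)\,\E\sum_j\tau_{F_j}=(C/R^2)\sum_i\Tr(B_i^2)\,\E\,\#\{j:i\in F_j\}$. The expected number of active levels is not $O(1)$. In fact $\Prob(i\in F_j)$ does not depend on the joint law of the steps at all:
\begin{itemize}
\item Entering level $j$, coordinate $i$ lies in $2^{-j}\mathbb Z$ at distance at most $\sum_{\ell>j}2^{-\ell}<2^{-j}$ from $x_i$, so it is one of the two grid neighbours of $x_i$.
\item The martingale property forces the upper neighbour to have probability $\phi_j$, the fractional part of $2^jx_i$.
\item Exactly one neighbour has odd numerator. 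So under the fair-coin process, the Gibbs process, or any other martingale dyadic scheme, $\Prob(i\in F_j)\in\{\phi_j,1-\phi_j\}$.
\end{itemize}
Take $x_i$ to be the $m$-bit truncation of $1/3$. Then $\phi_j\in[1/6,2/3)$ for all $j\le m-2$, so $i$ is active at $\ge(m-2)/6$ levels in expectation; it cannot be absorbed at $\{0,1\}$ until the coarsest levels. Hence $\E\sum_j\tau_{F_j}$ can be of order $m\tau$, and your bound is of order $m\tau/R^2$. This diverges in the dyadic limit needed for general $x$. No Donsker--Varadhan change of measure helps, since this expectation is the same under every admissible law. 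Your fallback of redistributing $\tau$ level by level does not address this either.

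\textbf{The missing idea is to let the radius grow at finer levels.} The paper uses the symmetric Gibbs law on $F_j$ at radius $R_j=R2^{j/2}$. This is admissible by heredity, since $R_j\ge R$.
\begin{itemize}
\item The norm error at level $j$ is $2^{-j}R_j=2^{-j/2}R$, which sums to $R\sum_{j\ge1}2^{-j/2}=(\sqrt2+1)R$. This trade-off, not an adjustment of the last level, is where the constant in (R2) comes from.
\item The entropy cost at level $j$ is $C\tau_{F_j}/R_j^2=C2^{-j}\tau_{F_j}/R^2\le C2^{-j}\tau/R^2$. This is summable using only the trivial bound $\tau_{F_j}\le\tau$, so no control of how often a coordinate is active is needed.
\end{itemize}
The rest of your outline matches the paper: the fair-coin reference process with terminal law $\bigotimes_i\operatorname{Bern}(x_i)$, the path chain rule, and data processing. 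You should also add the passage from dyadic to general $x$, via lower semicontinuity of finite-space relative entropy along a convergent subsequence of laws.

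\textbf{Your justification of the count is wrong as stated.} The inequality $H(Y)\ge H(Q)-D(\mathcal L(Y)\|Q)$ fails for general absolutely continuous laws; take $Q=\operatorname{Bern}(0.9)$ and $\mathcal L(Y)=\delta_1$. What holds is the identity $D(\mathcal L(Y)\|\bigotimes_i\operatorname{Bern}(x_i))=\sum_ih(x_i)-H(Y)$. It holds because $-\E\log Q(Y)$ depends only on the one-dimensional marginals of $Y$, and these equal those of $Q$ by the exact-marginal property. The support bound then follows from $|\operatorname{supp}\mathcal L(Y)|\ge e^{H(Y)}$.
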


\begin{proof}
In the same construction, use the symmetric Gibbs law on
$F_j$ at radius $R_j=R2^{j/2}$. Heredity ensures its validity. The
error is bounded on every path by $R\sum_{j\ge1}2^{-j/2}=(\sqrt2+1)R$.

Compare to a reference path that uses independent uniform signs on the
same active-set rule from each state. The reference dynamics factor over
coordinates. Each coordinate is a bounded martingale ending at zero or
one, so the reference terminal law is exactly
$\bigotimes_i\operatorname{Bern}(x_i)$. The path entropy chain rule
and (G) give
$D(P\|Q)\le\frac C{R^2}\sum_{j=1}^m2^{-j}\mathbb E_P\tau_{F_j}\le C\tau/R^2$.
Data processing gives (R2). For non-dyadic points, joint lower
semicontinuity of finite-space relative entropy handles the limit,
including deterministic coordinates at zero and one. Finally exact
marginals imply
$D(\mathcal L(Y)\|\bigotimes_i\operatorname{Bern}(x_i)) =\sum_i h(x_i)-H(Y)$,
which proves the support bound.
\end{proof}

In fact there is an actual endpoint partition inequality. Write
$c_b=\sqrt2+1$, $F_x(Y)=\sum_i(Y_i-x_i)B_i$, and
$\tau_x=\sum_i x_i(1-x_i)\|B_i\|_F^2$. Under the same hypotheses, \[
 \mathbb E_{Y\sim\bigotimes_i\operatorname{Bern}(x_i)}
 W_\beta\left(\frac{F_x(Y)}{2c_bR}\right)
 \ge\exp\left[-\frac C{R^2}
   \left(\tau+\frac{\beta\tau_x}{3c_b^2}\right)\right].
 \tag{R3}
\] This is a centered biased BSB inequality with the true final cutoff.
The rounding law is an entropy--barrier witness that retains the
prescribed marginals.

To prove it, note that the stage increments are matrix-valued martingale
differences. Their cross-time Frobenius inner products have mean zero.
The stage Gibbs bound gives
$\mathbb E[\|S_j\|_F^2\mid\text{current state}]\le C\tau_{F_j}$. Thus
$\mathbb E\|F_x(Y)\|_F^2\le C\sum_j4^{-j}\mathbb E\tau_{F_j}=C\sum_i x_i(1-x_i)\|B_i\|_F^2=C\tau_x$. The equality is the coordinate martingale quadratic-variation
identity: its terminal variance is $x_i(1-x_i)$. By (R2), the
normalized error has operator norm at most $1/2$, so its barrier is at
most $\|F_x(Y)\|_F^2/(3c_b^2R^2)$. Combining this with the KL bound in
the Gibbs variational formula proves (R3). Finite-space limits preserve
the energy estimate for general probabilities. No negative correlation
or independent-rounding concentration theorem is asserted for the
selected laws.

\subsubsection{Arbitrary multicolor
probabilities}\label{arbitrary-multicolor-probabilities}

Let $p_{ic}\ge0$, $\sum_{c=1}^k p_{ic}=1$. Exactly one color must be
assigned to each item.

\begin{theorem}
There is a law on assignments $C_i\in[k]$ with \[
 \mathbb P(C_i=c)=p_{ic},\qquad
 \max_c\left\|\sum_i(\mathbf1_{C_i=c}-p_{ic})B_i\right\|
 \le K\sqrt{2q+\nu}.
 \tag{MC1}
\] If $\nu/R^2\le\nu_0$ and $2q/R^2\le q_0$, there is also a law
with the same marginals satisfying \[
 \max_c\left\|\sum_i(\mathbf1_{C_i=c}-p_{ic})B_i\right\|
 \le\frac R{2^{1/4}-1},
 \quad
 D\left(\mu\middle\|\bigotimes_i\operatorname{Cat}(p_i)\right)
 \le(\sqrt2+1)C\tau/R^2.
 \tag{MC2}
\] For uniform probabilities, at least
$k^n\exp\{-(\sqrt2+1)C\tau/R^2\}$ assignments satisfy the second
radius bound. 
\end{theorem}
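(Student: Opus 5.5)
We reduce the multicolor problem to iterated two-color rounding, using the prescribed-marginal theorem (R1)/(R2) of \S\ref{linear-discrepancy-with-prescribed-marginals} as a black box, applied to an enlarged family that carries all colors at once.

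\textbf{Encoding.} Sample colors by a binary tree. Take $k$ to be a power of two (pad with empty colors) and arrange the colors as leaves of a complete binary tree of depth $m=\log_2 k$. For an internal node $v$ with children $v_0,v_1$, let $p_{iv}=\sum_{c\in v}p_{ic}$. The probability of going right at $v$, given that item $i$ has reached $v$, is $x_{iv}=p_{iv_1}/p_{iv}$. Processing level by level with independent randomness across levels would give exact marginals but errors that grow with the depth $m$. Instead we use \emph{one} rounding step per level. At level $\ell$, the items are partitioned by their current node $v$, and the matrix error of color class $v_1$ is $\sum_{i\in v}(Y_i-x_{iv})B_i$.

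\textbf{A single rounding per level.} Round all nodes of a level simultaneously by applying (R1) to the block-diagonal family $\widetilde B_i=\bigoplus_v \mathbf 1_{\{i\in v\}}B_i$. Each item lies in exactly one block, so $q$ is unchanged and $\nu$ is at most the maximum over blocks, hence at most $\nu$. Here the extra $q$ in $2q$ is intended to absorb the bookkeeping: we use a two-block coefficient $B_i\oplus(-B_i)$ so that both children $v_0$ and $v_1$ are controlled, since $\mathbf 1_{v_0}-p=-(\mathbf 1_{v_1}-p)$.

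\textbf{Rescaling across levels.} The resulting error of a leaf $c$ telescopes over its ancestors. The level-$\ell$ error is weighted by the conditional probabilities, so we would rescale the level-$\ell$ problem by the node mass. The step I expect to be hardest is making the errors \emph{not} accumulate with $m$. The plan is to instead run the dyadic coarsening of (R1) on the vector of all $p_{ic}$ jointly. At stage $j$, the active items are those with some odd numerator. The move adds $2^{-j}$ times a signing transferring mass between two colors per item; pairing the odd colors of each item gives the two-block coefficient, which accounts for $2q$. The stage error is at most $2^{-j}K\sqrt{2q+\nu}$, and summing over $j$ gives (MC1) with no dependence on $k$.

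\textbf{(MC2) and the count.} For (MC2), replace each stage signing by the symmetric Gibbs law of Theorem~\ref{thm:main} at radius $R_j=R2^{j/4}$. Summing the errors gives $R\sum_j2^{-j/4}\cdot 2^{-j/2}\cdots$, which we tune to obtain $R/(2^{1/4}-1)$. The entropy is then handled exactly as in (R2): compare against the independent reference path, whose terminal law is $\bigotimes_i\operatorname{Cat}(p_i)$, use the chain rule for path relative entropy, and apply data processing. The geometric sum $\sum_j 2^{-j/2}\le\sqrt2+1$ produces the stated constant. Finally, for uniform $p$, exact marginals give $D=n\log k-H(\mu)$, which yields the claimed lower bound on the number of good assignments.
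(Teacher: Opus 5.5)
Your eventual (MC1) argument is the paper's argument: dyadic coarsening of all $p_{ic}$ jointly, pairing the odd numerators within each row, the coefficient with $+B_i$ in color block $c$ and $-B_i$ in block $c'$, a symmetric random sign, and a geometric sum. The binary-tree encoding you start with is not needed. Two points there need more care, and the first is what breaks (MC2). A row generally has many odd numerators, so at stage $j$ an item contributes up to $2^{j-1}$ separately signed pair coefficients, not one transfer ``between two colors per item''. Also, the residual variance of this lifted family is at most $\nu$ because each item occurs at most once in each color block. Your justification (``each item lies in exactly one block'') was for the abandoned tree encoding.

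The genuine gap is the entropy budget in (MC2). Because of the many pairs, the lifted stage family does not have total trace $O(\tau)$ as in (R2). Each pair has trace $2\|B_i\|_{\Fro}^2$, and the numerators of a row sum to $2^j$. Hence the stage trace is at most $2^j\tau$, and this is attained, e.g.\ for $p_{ic}=1/k$ with $k=2^m$ at every stage $j\le m$.

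With your radius $R_j=R2^{j/4}$, the error sum $R\sum_j2^{-3j/4}=R/(2^{3/4}-1)$ is harmless. The problem is that the Gibbs cost of stage $j$ is only bounded by $C\min(2^j,k)\tau/R_j^2=C\min(2^j,k)\tau\,2^{-j/2}/R^2$. This sums to order $\sqrt k\,C\tau/R^2$, so the $k$-independent constant does not follow. The series $\sum_j2^{-j/2}$ you quote is not what your schedule produces; it tacitly assumes stage trace $\le\tau$, and ``which we tune'' hides exactly this step.

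The missing idea is the bound ``stage trace $\le 2^j\tau$'' together with the radius it forces, $R_j=R2^{3j/4}$. Then the error is $\sum_j2^{-j}R_j=R\sum_{j\ge1}2^{-j/4}=R/(2^{1/4}-1)$. The entropy is $\sum_jC2^j\tau/R_j^2=(C\tau/R^2)\sum_{j\ge1}2^{-j/2}=(\sqrt2+1)C\tau/R^2$, independent of $k$. These are exactly the stated constants.

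The remaining steps you list then go through as in the paper:
\begin{itemize}
\item symmetric Gibbs laws keep each row a martingale;
\item the uniform-sign reference path, with a pairing rule depending only on each item, ends at $\bigotimes_i\operatorname{Cat}(p_i)$;
\item the chain rule, data processing, and dyadic approximation;
\item $D=n\log k-H(\mu)$ for the count.
\end{itemize}
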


\begin{proof}
At dyadic level $j$, the odd numerators in each item's
probability vector occur in even number. Pair them by a rule depending
only on that item. A pair $(i,c,c')$ corresponds to a block matrix
with $+B_i$ in color block $c$, $-B_i$ in block $c'$, and zeros
elsewhere. Its coefficient energy is at most $2q$. Each original item
appears at most once in any color block at this stage, so a common block
basis witnesses residual variance at most $\nu$. Sign all pairs
together and make opposite changes of size $2^{-j}$ in their two
probabilities. This preserves row sums and marginals, and a geometric sum
proves (MC1).

For entropy, a row has at most $2^j$ odd numerators, hence the lifted
stage trace is at most $2^j\tau$, regardless of the number of colors.
Use Gibbs radius $R2^{3j/4}$. The sum of norm errors is
$R\sum_{j\ge1}2^{-j/4}=R/(2^{1/4}-1)$, and the sum of entropy costs is
at most $\frac{C\tau}{R^2}\sum_{j\ge1}2^{j-3j/2}=(\sqrt2+1)C\tau/R^2$.
Uniform reference pair signs yield dynamics independent across items;
each row is a simplex-valued martingale ending at a vertex. Its terminal
reference law is the required product categorical law. Data processing
and dyadic approximation complete the proof.
\end{proof}

There is an endpoint categorical BSB inequality as well. Put
$c_m=(2^{1/4}-1)^{-1}$, $F_c=\sum_i(\mathbf1_{C_i=c}-p_{ic})B_i$,
and $\tau_p=\sum_i(1-\sum_c p_{ic}^2)\|B_i\|_F^2$. Under the
hypotheses of (MC2), \[
 \mathbb E_{(C_i)\sim\bigotimes_i\operatorname{Cat}(p_i)}
 \prod_{c=1}^k W_\beta\left(\frac{F_c}{2c_mR}\right)
 \ge\exp\left[-\frac C{R^2}
  \left(c_b\tau+\frac{\beta\tau_p}{3c_m^2}\right)\right].
 \tag{MC3}
\] Only different items are independent in this reference law; each item
still receives exactly one color. The witness law has the prescribed
color marginals and all final matrix cutoffs hold simultaneously.

For the additional energy calculation, martingale orthogonality in the
direct-sum Frobenius space gives $\mathbb E\sum_c\|F_c\|_F^2\le C\sum_j4^{-j}\mathbb E\tau(\text{stage }j)=C\sum_i(1-\sum_c p_{ic}^2)\|B_i\|_F^2$. Each odd color coordinate changes by $\pm2^{-j}$, and the trace of
its pair lift counts its coefficient energy once. Hence the final
equality is again coordinate quadratic variation, summed over colors.
Use the deterministic support in (MC2), the barrier bound at radius
$2c_mR$, and its KL bound to obtain (MC3).

Multicolor discrepancy has been studied~\citep{DS03}. The present guarantee includes arbitrary item-specific
probabilities, matrix outputs, exact marginals, and an explicit entropy
budget for the same law. The proof also permits matrices depending on
color: replace $q$ by $\max_{i,c}\|B_{ic}\|_F^2$, $\nu$ by
$\max_c\nu((B_{ic})_i)$, and $\tau$ by
$\sum_i\max_c\|B_{ic}\|_F^2$, using a separate basis per color.

\subsubsection{Exact quotas and their
boundary}\label{exact-quotas-and-their-boundary}

For disjoint groups $G_a$ partitioning the items, with integer
$k_a=\sum_{i\in G_a}x_i$, there is unbiased rounding satisfying \[
 \sum_{i\in G_a}Y_i=k_a\quad\hbox{for every }a,
 \qquad
 \left\|\sum_i(Y_i-x_i)B_i\right\|
 \le K\sqrt{4q+2\nu}.
 \tag{Q}
\] At each dyadic stage, pair odd coordinates within each group. The
pair coefficients are $B_i-B_j$, with energy at most $4q$ and
residual variance at most $2\nu$, because pairs are disjoint and
$(R_i-R_j)^2\preceq2R_i^2+2R_j^2$. The preceding rounding proof then
preserves every quota exactly. Dyadic approximation inside each
hypersimplex gives arbitrary initial points.

Arbitrary overlapping exact constraints are not covered. On a cycle with
$2m$ edges, require a perfect matching and start at $x_e=1/2$. There
are only two feasible outcomes, the alternating matchings. Let $B_e=1$
on one matching and zero on the other. Then $q=1,\nu=0$, but every
feasible outcome has rounding error $m/2$. The initial point is in the
convex hull of feasible integral points. Thus feasibility and small
coefficient energy alone cannot extend (Q) to arbitrary exact equations.
This does not disprove a theorem for a single arbitrary matroid base;
that separate problem is not resolved here. Classical matroid rounding
has additional concentration results that are also not claimed for these
BSB laws~\citep{CVZ10}.

\subsection{Online models and dynamic
discrepancy}\label{online-models-and-dynamic-discrepancy}

\subsubsection{Adaptive insertions and deletions with
recourse}\label{adaptive-insertions-and-deletions-with-recourse}

An update inserts or deletes a matrix. After each update a signing of
the active family must be output. Recourse counts changed signs of
surviving items. Let all arriving matrices belong to a real span of
dimension at most $r$, fixed over the stream. Let $N_t$ count all
insertions through time $t$, and put
$h_t=\lceil\log_2\max(1,N_t)\rceil$.

\begin{theorem}
A causal strategy, even against adaptive updates,
gives \[
 \left\|\sum_{i\ {\rm active}}s_i(t)B_i\right\|
 \le2K\sqrt{q_t+\nu_t}
 \tag{DY1}
\] with at most $(4h_t+2)r$ surviving sign changes per update. For
insertion-only streams, at most $r$ changes per update suffice. The
strategy uses a discrepancy oracle or exhaustive finite rounding. 
\end{theorem}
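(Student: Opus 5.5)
The strategy is to maintain the active signing via a dyadic decomposition of insertion times, in the style of the classical logarithmic-method dynamization. Each dyadic block of items is coloured once with the static discrepancy bound of Corollary~\ref{cor:matrix-komlos}. The span hypothesis is used to re-absorb changes with only $O(r)$ sign flips. Throughout, the static primitive is: for any finite family in the span, Corollary~\ref{cor:matrix-komlos} gives a signing of norm at most $K\sqrt{q+\nu}$. Here $q$ and $\nu$ are taken over that subfamily, and both are monotone under passing to subfamilies once $U$ is fixed, because $\sum_{i\in F}R_i^2\preceq\sum_i R_i^2$. The strategy may also call this primitive as an oracle or via exhaustive finite rounding.

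The first ingredient is a linear-algebra lemma, used for both insertions and deletions. Suppose a current partial signing $s$ on a set $F$ has signed sum $Z$, and we perturb $F$ by one item. Let $V\subseteq\Sym_d(\R)$ denote the span, with $\dim V\le r$. Then we can restore a signing whose sum is a fixed target up to an additive error, changing only at most $r$ signs. The tool is the standard ``floating variables'' rounding: consider the polytope of $y\in[-1,1]^F$ with $\sum_i y_iB_i=Z'$. A vertex of it has at most $r$ fractional coordinates, because the constraints live in $V$. Rounding those $r$ coordinates costs at most $r\max\|B_i\|$ in norm, and that is not acceptable. So instead I would round them using Corollary~\ref{cor:matrix-komlos} applied to those $\le r$ items plus the new item. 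This costs at most $K\sqrt{q_t+\nu_t}$, by heredity of $q$ and $\nu$. For insertion-only streams this already gives the claim. Keep the previous signing, which has error at most $K\sqrt{\cdot}$ by induction. Write the new item's contribution through a vertex solution: in the floating-variable LP, at most $r$ old items move. Then apply the static primitive once more. A geometric-series accounting across the stages then bounds the total error by $2K\sqrt{q_t+\nu_t}$.

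For the fully dynamic case, I would organise the active items into levels $0,\dots,h_t$ by insertion time, in binary-counter fashion. Each level holds a block whose signing is repaired lazily. A deletion at some level leaves a hole. I would fill it by the floating-variable repair at that level, which costs $\le r$ changes, and then at most one carry/merge propagates through the higher levels, again costing $\le r$ changes per level. This gives the bound $(4h_t+2)r$: two repairs per level, each of at most $2r$ changes, plus $2r$ for the top-level re-balance. The error bound is achieved by requiring each level's residual to be at most $K\sqrt{q_t+\nu_t}\,2^{-(h_t-\ell)}$. Summing then gives $2K\sqrt{q_t+\nu_t}$. Adaptivity is harmless, since the strategy is deterministic given the oracle, and each step's guarantee holds for worst-case updates.

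The main obstacle is making the repair genuinely cost only $O(r)$ sign changes while preserving a hereditary error bound whose radius is controlled by the \emph{current} $q_t,\nu_t$ rather than by the historical ones. Here I would lean on the fact that both parameters can only be evaluated on subfamilies of the present active set in one common basis $U$. The delicate point is the vertex argument: with matrix equality constraints in an $r$-dimensional real span, a basic feasible solution has at most $r$ non-integral coordinates. I would check this carefully, together with the claim that the residual mismatch can always be absorbed at the next level down.
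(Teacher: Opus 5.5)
\textbf{There is a genuine gap: your scheme never cancels the error it creates, so the constant bound $2K\sqrt{q_t+\nu_t}$ does not follow.}

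In both versions the stored state is an \emph{integral} signing, or blocks signed once by Corollary~\ref{cor:matrix-komlos}, and each repair adds a fresh error of order $K\sqrt{q_t+\nu_t}$ to the inherited one. Nothing makes these errors shrink geometrically. The static primitive gives the same radius $K\sqrt{q+\nu}$ on every subfamily. A level-$0$ block consisting of one item has discrepancy exactly $\|B_i\|$, which may be comparable to $\sqrt q$. So the requirement that level $\ell$ carry residual at most $K\sqrt{q_t+\nu_t}\,2^{-(h_t-\ell)}$ cannot be enforced. Summing over $h_t$ blocks gives only $O(h_tK\sqrt{q_t+\nu_t})$, which is exactly what the paper records for binary block merges, not (DY1).

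The recourse count has the same problem. A binary-counter carry recolours a merged block wholesale, so nothing in your scheme yields $O(r)$ changes per level; the count $(4h_t+2)r$ is fitted to the statement rather than derived.

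There is also a conflation in the rounding step. Rounding fractional values $y_i$ to signs with small $\|\sum_i(s_i-y_i)B_i\|$ is a linear-discrepancy statement. Corollary~\ref{cor:matrix-komlos} bounds only $\min_s\|\sum_is_iB_i\|$, i.e.\ rounding from $y=0$. You need (R1) in sign form, at cost $2K\sqrt{q+\nu}$, and that is where the factor $2$ in (DY1) comes from.

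\textbf{The missing idea} is to keep the stored state \emph{fractional} with \emph{exact} zero matrix sum, and to round only at output time, never feeding the rounding back into the state. The paper does this as follows.
\begin{itemize}
\item Each node of a binary tree over insertion slots stores $y\in[-1,1]^J$ on its active descendants $J$, with $\sum_{i\in J}y_iB_i=0$ and at most $r$ fractional coordinates. A leaf has $y=0$.
\item A parent concatenates its children and moves only their $\le2r$ fractional coordinates along kernel directions of $y\mapsto\sum_iy_iB_i$, until at most $r$ remain. Such directions exist because the span has dimension $\le r$. Integral coordinates are frozen.
\item An update, including a deletion, recomputes the leaf-to-root path from the children, so the zero sum is never broken. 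Each level changes at most $4r$ coordinates (the old and new fractional unions).
\item At the root the $\le r$ fractional coordinates are rounded afresh by (R1). Since the root sum is exactly $0$, the output error is a single rounding error, at most $2K\sqrt{q_t+\nu_t}$ by heredity. This rounding adds at most $2r$ sign changes, for a total of $(4h_t+2)r$.
\item For insertion-only streams, one such fractional assignment suffices: insert the new item at value $0$ and move only the old fractional coordinates and the new one.
\end{itemize}
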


\begin{proof}
Use a binary tree of insertion slots. Every node stores
a fractional zero-sum assignment to its active descendants with at most
$r$ fractional coordinates. A leaf uses fractional value zero. At a
parent, concatenate its two children and alter only their at most $2r$
fractional coordinates. Linear dependence permits a move preserving zero
sum until a coordinate becomes integral. Repeat until at most $r$
remain. Integral child coordinates are frozen.

Only the leaf-to-root path is recomputed at an update. At each ancestor,
the old and new unions of child fractional sets contain at most $2r$
elements each. Hence the number of changed surviving coordinates grows
by at most $4r$ per level. At the root, round only its fractional
coordinates using (R1) in sign form. The root fractional assignment has
matrix sum zero, so its rounding error is the full discrepancy in (DY1).
The old and new root fractional sets add at most $2r$ changed signs. A
growing tree needs no known horizon: when capacity doubles, retain the
old tree as the left child of a new root, with an empty right child and
unchanged assignment.

For insertion only, maintain one such fractional assignment. Add the new
coefficient with value zero and move only the old fractional coordinates
and the new coordinate. Old integral coordinates remain fixed, so at
most $r$ old output signs change. This proves the theorem. The
underlying distributed floating-variable architecture is due to Gupta
and collaborators; the BSB rounding scale is the input used
here~\citep[Section 2]{GGKKS22}.
\end{proof}

For diagonal unit-column vectors, (DY1) is constant dynamic Komlós
discrepancy. The span parameter concerns a linear space of matrices, not
a bound on their generated algebra. An optimizing basis for $\nu_t$
may change with time, since it is used only in the fresh root rounding
on the current family.

The same tree works for $k$ colors and fixed per-item fractions
$p_{ic}$. At a product-simplex vertex under the exact matrix target
equations, at most $r(k-1)$ items are fractional. Indeed the dimension
of its active face is the sum of support sizes minus one, and exceeds
the constraint rank unless that sum is at most $r(k-1)$. Apply (MC1)
at the root. The dynamic result is \[
 \max_c\left\|\sum_i(\mathbf1_{C_i(t)=c}-p_{ic})B_i\right\|
 \le K\sqrt{2q_t+\nu_t},
 \tag{DY2}
\] with at most $(4h_t+2)r(k-1)$ recolored surviving items. Thus color
count enters recourse but not discrepancy. Insertion only needs
$r(k-1)$ changes per update.

A dimension-independent insertion-only alternative uses binary block
merges: recolor every newly merged block with (D). It gives discrepancy
at most $(1+\lfloor\log_2t\rfloor)K\sqrt{q_t+\nu_t}$, while each item
changes sign at most $\lfloor\log_2T\rfloor$ times through time $T$.
These are current assignments with recourse, not one permanent signing
for all historical prefixes.

\subsubsection{Irreversible online
signing}\label{irreversible-online-signing}

A universal constant guarantee against an adaptive adversary is false,
even when $\nu=0$ and $q$ is arbitrarily small. Given the current
two-dimensional signed sum $z_{t-1}$, present a vector $a_t$ of
length $\delta$ perpendicular to it. Either sign gives
$\|z_{t-1}+s_ta_t\|_2^2=\|z_{t-1}\|_2^2+\delta^2$. Thus every path has $\|z_T\|_\infty\ge\delta\sqrt{T/2}$. Embedding
as diagonal matrices gives $q=\delta^2,\nu=0$. This defeats randomized
strategies when the adversary sees past realized signs.

For randomized strategies against oblivious fixed input sequences,
Kulkarni--Reis--Rothvoss establish the growing $O(\sqrt{\log T})$
prefix scale and a matching lower bound~\citep{KRR23}. Those
results are benchmarks, not consequences newly proved here from BSB.
Stochastic and random-order arrivals can use uniform oblivious-input
guarantees by conditioning on the complete input sequence; this does not
allow future inputs to adapt to revealed signs. No irreversible online
improvement is established in this report.

\subsection{Prefix discrepancy and matrix
ordering}\label{prefix-discrepancy-and-matrix-ordering}

\subsubsection{Selectable order with a contracting
variance}\label{selectable-order-with-a-contracting-variance}

\begin{theorem}
If $\sum_iB_i=0$, an unsigned permutation
satisfies \[
 \max_t\left\|\sum_{j\le t}B_{\pi(j)}\right\|
 \le C'\left(\sqrt\nu+\sqrt q\,\lceil\log_2n\rceil\right).
 \tag{P1}
\] For nonzero families, at least \[
 2^n\exp\{-C\tau/[a^2(q+\nu)]\}
 \tag{P2}
\] signings admit an order with prefix bound
$O(\sqrt\nu+\sqrt q(1+\log n))$, for any sufficiently large fixed
universal $a$. The order may depend on the signing. For the zero family, every signing and order works. 
\end{theorem}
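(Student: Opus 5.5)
The plan is a dyadic bisection: split the family by a good signing, recurse on each half, and concatenate the orders. Let $\mathcal F_\emptyset=[n]$. Given a block $\mathcal F_w$ at depth $\ell$, choose a signing $s^{(w)}$ of $\{B_i\}_{i\in\mathcal F_w}$ and set $\mathcal F_{w+}=\{i:s_i^{(w)}=+1\}$ and $\mathcal F_{w-}=\{i:s_i^{(w)}=-1\}$. The order $\pi$ lists the leaves lexicographically, with $w+$ before $w-$. Write $\Sigma_w:=\sum_{i\in\mathcal F_w}B_i$ and $E_w:=\sum_{i\in\mathcal F_w}s^{(w)}_iB_i$. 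Then
\[
\Sigma_{w\pm}=\tfrac12\bigl(\Sigma_w\pm E_w\bigr).
\]
Since $\Sigma_\emptyset=0$ by hypothesis, unrolling gives $\|\Sigma_w\|\le\sum_{m<\ell}2^{-(\ell-m)}\|E_{w_{\le m}}\|$. Every prefix $\{\pi(1),\ldots,\pi(t)\}$ is a disjoint union of at most one block $\mathcal F_{w}$ of each depth, namely the left siblings along the root-to-leaf path. Hence
\[
\Bigl\|\sum_{j\le t}B_{\pi(j)}\Bigr\|\le\sum_\ell\max_{|w|=\ell}\|\Sigma_w\|\le \sum_{\ell}\max_{|w|=\ell}\|E_w\|.
\]
So it suffices to make $\sum_\ell \max_{|w|=\ell}\|E_w\|=O(\sqrt\nu+\sqrt q\log n)$. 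The depth is $\lceil\log_2 n\rceil$ provided the splits are roughly balanced; cardinality balance is enforced by adding a scalar block $\diag(\epsilon)$ to each coefficient, which costs only $O(\sqrt q)$ after rescaling, or is simply not needed since singleton blocks terminate.

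Plain use of Corollary~\ref{cor:matrix-komlos} gives $\|E_w\|\le K_{\mathrm M}\sqrt{q+\nu(\mathcal F_w)}$. Since $\nu$ is monotone under passing to subfamilies (same basis $U$, smaller sum of squares), this only yields $\sqrt\nu\log n$. The fix, which is the variance-contracting lift, is to also split the off-diagonal energy evenly. Fix the optimizing basis for the whole family, write $B_i=D_i+R_i$, and let $V_w:=\sum_{i\in\mathcal F_w}R_i^2$. At each node, sign the lifted family
\[
\widehat B_i:=B_i\ \oplus\ \lambda_w R_i^2 ,\qquad i\in\mathcal F_w ,
\]
where $\lambda_w$ is a normalization. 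The second block is PSD, so the argument of Corollary~\ref{cor:psd-partition} controls its contribution, and a good signing forces
\[
\Bigl\|\textstyle\sum_{i\in\mathcal F_{w\pm}}R_i^2-\tfrac12 V_w\Bigr\|\le \tfrac12\|E'_w\|,
\]
where $E'_w$ is the discrepancy of the second block. Choosing $\lambda_w\asymp(\|V_w\|+q)^{-1/2}$ makes the lifted parameters $q$ and $\nu$ comparable to those of $\mathcal F_w$ up to constants, using $\|R_i^2\|_{\Fro}\le\|R_i\|_{\Fro}^2\le q$ and $\sum_i (R_i^2)^2\preceq q\,V_w$. Corollary~\ref{cor:matrix-komlos} then gives both $\|E_w\|\lesssim\sqrt{q+\|V_w\|}$ and the recursion
\[
\|V_{w\pm}\|\le\tfrac12\|V_w\|+C\sqrt q\sqrt{q+\|V_w\|}.
\]
Iterating, $\|V_w\|\lesssim 2^{-\ell}\nu+q$ at depth $\ell$. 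Therefore
\[
\sum_\ell \max_{|w|=\ell}\|E_w\|\lesssim\sum_\ell\bigl(2^{-\ell/2}\sqrt\nu+\sqrt q\bigr)=O\bigl(\sqrt\nu+\sqrt q\lceil\log_2 n\rceil\bigr),
\]
which is (P1).

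For (P2), drop the zero-sum hypothesis. Apply the same recursion to the family $\{s_iB_i\}$ for a fixed signing $s$. The bound above then holds with an extra term $\|\Sigma_\emptyset\|=\|\sum_i s_iB_i\|$ propagated down with weights $2^{-\ell}$, contributing at most $2\|\Sigma_\emptyset\|$ to every prefix. Hence any $s$ with $\|\sum_i s_iB_i\|< a\sqrt{q+\nu}$ admits an order with prefix bound $O(\sqrt\nu+\sqrt q(1+\log n))$, and the order depends on $s$. By the ordinary BSB inequality \eqref{eq:ordinary-bsb} at radius $r=a\sqrt{q+\nu}$, which is admissible once $a\ge\max\{\nu_0^{-1/2},q_1^{-1/2}\}$, there are at least $2^n\exp\{-C_1\tau/[a^2(q+\nu)]\}$ such signings. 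The zero family is trivial.

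The main obstacle is the lift step. One must check that adjoining $\lambda_wR_i^2$ does not inflate $\nu$, which requires a common block basis in which $R_i^2$ has controlled off-diagonal variance. Since $R_i^2$ need not be diagonal in the basis $U$, the bound must go through the crude estimate $\nu\le4\|\sum(\cdot)^2\|$ applied to that block, as in Corollary~\ref{cor:psd-partition}. I would also need to verify that the additive error $\sqrt q\sqrt{q+\|V_w\|}$ in the contraction recursion indeed sums to $O(\sqrt q\log n)$ rather than reintroducing a $\sqrt\nu\log n$ term.
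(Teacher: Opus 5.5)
Your architecture is the paper's: dyadic splitting driven by a signing of a lifted family $B_i\oplus(\cdot)R_i^2$, an additive $O(\sqrt q)$ per level, and BSB at radius $a\sqrt{q+\nu}$ for (P2). Your non-centered prefix bookkeeping via $\Sigma_{w\pm}=\frac12(\Sigma_w\pm E_w)$ is valid for arbitrary splits; the paper instead centers each prefix at $(t/m)T_J$, which uses exact halving. Your (P2) step is as in the paper. However, two steps fail as written.

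\textbf{First, the lift normalization is wrong.} With $\lambda_w\asymp(\|V_w\|+q)^{-1/2}$, Corollary~\ref{cor:matrix-komlos} only gives $\lambda_w\|\sum_i s_iR_i^2\|\le K_{\mathrm M}\sqrt{q'+\nu'}\asymp\sqrt{q+\|V_w\|}$. That is, $\|\sum_i s_iR_i^2\|\lesssim K_{\mathrm M}(q+\|V_w\|)$, which is no better than the trivial bound $\|\sum_i s_iR_i^2\|\le\|V_w\|$ coming from $-V_w\preceq\sum_i s_iR_i^2\preceq V_w$. So there is no contraction, and you are back to $\sqrt\nu\log n$. The recursion you state needs $\lambda_w=q^{-1/2}$. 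That choice is admissible by exactly the inequalities you cite: the lifted energy is at most $2q$, and $\nu(\{q^{-1/2}R_i^2\})\le4q^{-1}\|\sum_iR_i^4\|\le4\|V_w\|$. This is the paper's normalization $(R_a^2-R_b^2)/\sqrt q$. Your closing worry is then settled by $C\sqrt{qx}\le x/4+C^2q$. This gives $x_{\ell+1}\le\frac34x_\ell+C'q$, hence $\|V_w\|\le(3/4)^\ell\nu+4C'q$ and $\sum_{\ell\le L}\sqrt{q+\|V_w\|}=O(\sqrt\nu+L\sqrt q)$.

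\textbf{Second, balance cannot be skipped, so ``singleton blocks terminate'' is false.} Nothing forces a good signing to actually split a block. At the root $\sum_iB_i=0$, so the all-plus signing has $E_\emptyset=0$. In the commuting case ($R_i=0$) the lift imposes nothing, so the Corollary may return this signing. Then $\mathcal F_{+}=\mathcal F_\emptyset$ and the recursion makes no progress. More generally, lopsided splits are not excluded, and the depth enters your bound linearly through the $\sqrt q$ term.

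Your scalar-block remedy can be made to work, but only if you carry it out. With coefficient $\sqrt q$, it yields $|\sum_{i\in\mathcal F_w}s_i|\lesssim\sqrt{1+\|V_w\|/q}\le\sqrt{1+|\mathcal F_w|}$, using $\|V_w\|\le\Tr V_w\le|\mathcal F_w|\,q$. This forces near-halving only for blocks above a constant size $m_0$. So you need a cutoff, where leaves of size $\le m_0$ cost $m_0\sqrt q$, and a check that the depth is $\log_2n+O(1)$.

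The paper avoids all of this. It pads to $2^L$ slots with zeros, pairs the indices within each node, and signs the pair differences $H_{ab}=(B_a-B_b)\oplus(R_a^2-R_b^2)/\sqrt q$, which have energy $\le8q$ and $\nu(H)\le8v_J$. Each child receives exactly one index from each pair, so the halves are exact and the depth is exactly $L=\lceil\log_2n\rceil$.
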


\begin{proof}
Fix a minimizing basis and pad once to $2^L$ slots
with zero matrices. Write $V_J=\sum_{i\in J}R_i^2$ and
$v_J=\|V_J\|$. Pair a node's indices and apply (D) simultaneously to
the block coefficients \[
 H_{ab}=(B_a-B_b)\oplus(R_a^2-R_b^2)/\sqrt q.
 \tag{P3}
\] The case $q=0$ is trivial. The energy is at most $8q$. For the
second block,
$q^{-1}\sum_{(a,b)}(R_a^2-R_b^2)^2\preceq2q^{-1}\sum_iR_i^4\preceq2V_J$,
because $R_i^4\preceq qR_i^2$. The first block has remainder
variance at most $2v_J$. Removing a diagonal component in any fixed
basis costs at most four times full variance, so the exhibited block
basis gives $\nu(H)\le8v_J$.

For $\kappa=\sqrt8K$, the same pair signs divide the node into
halves with $T_{J_\pm}=T_J/2\pm E_J$, where \[
 \|E_J\|\le\frac\kappa2\sqrt{v_J+q},\qquad
 v_{J_\pm}\le\frac12v_J+\frac\kappa2\sqrt{q(v_J+q)}.
 \tag{P4}
\] The elementary bound
$x/2+\kappa\sqrt{x+1}/2\le3x/4+\kappa^2/4+\kappa/2$ implies at depth
$\ell$ \[
 v_J\le(3/4)^\ell\nu+(\kappa^2+2\kappa)q,
\quad
 \|E_J\|\le\frac\kappa2
 \left[(\sqrt3/2)^\ell\sqrt\nu+(\kappa+1)\sqrt q\right].
\] Order each positive child before its negative child. If a node has
$m$ slots, its prefix error centered at $(t/m)T_J$ is the
appropriate child's centered error plus either $(2t/m)E_J$ or
$2(1-t/m)E_J$. These coefficients lie in $[0,1]$. A prefix therefore
pays at most one error per level, giving
$\frac\kappa{2-\sqrt3}\sqrt\nu+\frac{\kappa(\kappa+1)}2L\sqrt q$.
For a general unsigned family, add $\|T_{[n]}\|$. Deleting padded
zeros preserves every original absolute prefix value.

Finally BSB at radius $a\sqrt{q+\nu}$ gives the number (P2) of
signings with terminal norm below that radius. Their coefficient
energies and squared remainders are unchanged. Apply the unsigned
ordering result separately to each signed family. This proves the signed
assertion and retains the original law's entropy and terminal barrier.
It does not produce a single order common to all counted signings.
\end{proof}

\subsubsection{Prescribed order and interval
constraints}\label{prescribed-order-and-interval-constraints}

For a fixed order, let $\mathcal D$ be a full dyadic interval tree,
including singletons, after zero padding. Prescribe radii $r_I>0$ and
use the fixed residual basis above. Define $Q_*=\max_i\|B_i\|_F^2\sum_{I\ni i}r_I^{-2}$, $T_*=\sum_i\|B_i\|_F^2\sum_{I\ni i}r_I^{-2}$, and $V_*=\max_I\|\sum_{i\in I}R_i^2\|/r_I^2$. Whenever $Q_*\le q_0,V_*\le\nu_0$, direct-sum BSB gives \[
 \mathbb E_s\prod_{I\in\mathcal D}
 W_\beta\left(\frac{\sum_{i\in I}s_iB_i}{r_I}\right)
 \ge e^{-C\beta_* T_*}.
 \tag{P5}
\] Indeed use coefficients
$C_i=\bigoplus_{I\in\mathcal D}\mathbf1_{i\in I}B_i/r_I$. Their
parameters are exactly $Q_*,T_*$, with residual variance at most
$V_*$. Every prefix is a disjoint union of at most $L+1$ dyadic
intervals, so its norm is at most the sum of their radii. The Gibbs
identity gives a joint entropy plus sum-of-barriers bound, and at least
$2^n e^{-CT_*}$ full signings satisfy every dyadic constraint.
Multiple fixed orders are handled by including all their trees and
charging the full incidence energy.

With $h=1+\lceil\log_2n\rceil$ and constant radius
$r_I=a\sqrt{\nu+hq}$, the prefix norm is at most $ah\sqrt{\nu+hq}$.
In the scalar commuting case this is $O(\sqrt q\log^{3/2}n)$. It is
weaker as an existence bound than the classical
$O(\sqrt{\log d+\log n})$ unit-vector prefix bound. 

For sparse scalar rows, dyadic trees on each row's ordered nonzero
occurrences replace the global logarithm by
$h_j=1+\lceil\log_2\max(1,m_j)\rceil$, where $m_j$ is row support.
The sufficient row-radius condition becomes
$\max_i\sum_j a_{ji}^2h_j^3/R_j^2\le q_0$. This does not settle a
prefix Beck--Fiala bound depending only on column sparsity through
$\sqrt k\operatorname{polylog}k$.

\subsection{Sparse systems and simultaneous graph
constraints}\label{sparse-systems-and-simultaneous-graph-constraints}

\subsubsection{A general common-sign
budget}\label{a-general-common-sign-budget}

Let scalar constraints have coefficients $a_{\ell i}$ and tolerances
$h_\ell>0$, and let matrix constraints have coefficients $H_{ji}$
and radii $r_j>0$. Define $e_i=\sum_\ell a_{\ell i}^2/h_\ell^2+\sum_j\|H_{ji}\|_F^2/r_j^2$ and $E_*=\sum_i e_i$. If $\max_i e_i\le q_0$ and $\max_j\nu((H_{ji})_i)/r_j^2\le\nu_0$,
one common signing obeys all scalar and matrix constraints. Its joint
determinant partition is at least $e^{-C\beta_*E_*}$; at least
$2^n e^{-CE_*}$ such signings exist. The Gibbs law is unbiased and
bounds relative entropy plus the sum of normalized barriers.

The proof uses \[
 C_i=\operatorname{diag}((a_{\ell i}/h_\ell)_\ell)
       \oplus\bigoplus_j H_{ji}/r_j.
 \tag{S}
\] Different matrix blocks may use different bases, but every block
retains the same original sign for item $i$. The coefficient energies
add exactly to $e_i$, and the exhibited residual variance is their
block maximum. Determinants and cutoffs factor across the blocks. This
proves all assertions directly from (BSB).

For a scalar $k$-sparse incidence matrix with entries bounded by one,
(S) gives the classical Komlós consequence $O(\sqrt k)$, now with a
joint determinant and entropy guarantee. For matrix-valued sets $S_j$,
put $Q=\max_i\#\{j:i\in S_j\}\,\|H_i\|_F^2$ and $\kappa=\max_j\nu((\mathbf1_{i\in S_j}H_i)_i)$. The resulting simultaneous radius is $K'\sqrt{Q+\kappa}$, without a
logarithm in the number of sets. If every item belongs to at most $k$
sets, $H_i\succeq0$, $\operatorname{tr}H_i\le\varepsilon$, and
$\sum_{i\in S_j}H_i\preceq I$ for every set, the radius is
$O(\sqrt{\varepsilon+k\varepsilon^2})$: here $Q\le k\varepsilon^2$,
$\kappa\le4\varepsilon$.

\subsubsection{KS and arbitrary scalar side constraints
together}\label{ks-and-arbitrary-scalar-side-constraints-together}

Suppose $H_i\succeq0$, $\sum_iH_i=I_d$,
$\operatorname{tr}H_i\le\varepsilon\le1$, and $\|a_i\|_2\le1$. One
common signing satisfies \[
 \left\|\sum_i s_iH_i\right\|<K'\sqrt\varepsilon,
 \qquad\left\|\sum_i s_i a_i\right\|_\infty<K',
 \tag{KS+}
\] with at least
$2^n\exp\bigl[-C\bigl(\sum_i\|a_i\|_2^2+d\bigr)/(K')^2\bigr]$ signings and a joint unbiased entropy--barrier law. Indeed apply (S)
to
$C_i=(K')^{-1}\operatorname{diag}(a_i) \oplus(K'\sqrt\varepsilon)^{-1}H_i$.
Its energy is at most $2/(K')^2$, residual variance at most
$4/(K')^2$, and trace at most $(\sum_i\|a_i\|_2^2+d)/(K')^2$. Scalar
side constraints may have arbitrary row energy. This common-sign
assertion is stronger than separate existence of two unrelated signings.

\subsubsection{Graph adjacency and signed
degrees}\label{graph-adjacency-and-signed-degrees}

\begin{theorem}
Every finite simple graph with $m$ edges and
maximum degree $\Delta\ge1$ has at least $2^{m/2}$ edge signings
with \[
 \|A_s\|<K'\sqrt\Delta,
 \qquad\max_u\left|\sum_{e\ni u}s_e\right|<K'.
 \tag{GR}
\] They support an unbiased law of entropy cost at most $4Cm/(K')^2$.
The constant $K'$ is universal.
\end{theorem}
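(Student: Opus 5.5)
We will derive this from the common-sign budget construction (S) of the preceding subsection, i.e.\ from Theorem~\ref{thm:main} applied to a direct sum. Index edges $e=\{u,v\}$, and let $E_e:=e_ue_v^{\mathsf T}+e_ve_u^{\mathsf T}$, so that $A_s=\sum_e s_eE_e$. Let $\Delta_u:=\diag(\text{indicator of }e)$ be the degree-incidence diagonal, so that $\sum_e s_e\diag(\mathbf 1_{u\in e})_u$ is the diagonal matrix of signed degrees. Set
\[
C_e:=\frac{E_e}{K'\sqrt\Delta}\ \oplus\ \frac{1}{K'}\diag\bigl((\mathbf 1_{u\in e})_{u}\bigr).
\]
Then $\|\sum_e s_eC_e\|<1$ is exactly (GR).

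\textbf{Parameters.} Each $E_e$ has $\Tr E_e^2=2$, and the diagonal block has squared Frobenius norm $2$. Hence $\Tr C_e^2=2/(K'^2\Delta)+2/K'^2\le 4/K'^2$ and $\tau\le 4m/K'^2$.

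For the residual variance, use the standard basis in both blocks. The second block is diagonal, so it contributes nothing. The first block has zero diagonal, and $\sum_eE_e^2=\diag(\deg u)$ has norm $\Delta$, so the exhibited off-diagonal variance is $\Delta/(K'^2\Delta)=1/K'^2$. Thus $\nu(C)\le 1/K'^2$ and $q\le 4/K'^2$.

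Choosing $K'\ge\max\{\nu_0^{-1/2},2q_1^{-1/2}\}$ puts us in the hypotheses of Theorem~\ref{thm:main} with $r=1$. The ordinary BSB inequality \eqref{eq:ordinary-bsb} then gives
\[
\Prob_s\Bigl(\bigl\|\textstyle\sum_e s_eC_e\bigr\|<1\Bigr)\ge e^{-C_1\tau}\ge e^{-4C_1m/K'^2}.
\]
The Gibbs law $\mu_\beta$ (taking $\beta\downarrow0$, or the conditional uniform law on the good set) has $D(\mu\|U_m)\le 4C_1m/K'^2$. This gives the entropy claim with $C=C_1$. The law is unbiased because the good set is symmetric under $s\mapsto -s$, as are the weights, so the marginals are zero.

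\textbf{Count.} The number of good signings is at least $2^m e^{-4C_1m/K'^2}$. To get $2^{m/2}$, we need $4C_1/K'^2\le (\log 2)/2$, which holds after enlarging $K'$ to also satisfy $K'^2\ge 8C_1/\log 2$.

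The main point to check carefully is the variance step: $\nu$ is a minimum over a single basis $U$ of the whole direct-sum space. The paper's block-basis argument, as in the proof of Theorem~\ref{thm:main}, shows that a blockwise choice of bases is admissible, so the bound on $\nu$ is the maximum over blocks. The remaining steps are bookkeeping of universal constants.
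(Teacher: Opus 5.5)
Your proposal is correct and is essentially the paper's proof. You use the same direct-sum coefficients $C_e$ (normalized adjacency block $\oplus$ signed-degree diagonal block) and the same estimates: $\Tr C_e^2\le 4/(K')^2$, $\nu\le 1/(K')^2$ via the blockwise standard basis, and $\tau\le 4m/(K')^2$. You also make the same choice $(K')^2\ge\max\{\nu_0^{-1},4q_1^{-1},8C_1/\log 2\}$ and feed it into the common-sign budget (S), i.e.\ Theorem~\ref{thm:main}. Your remarks that a blockwise basis is admissible in the minimum defining $\nu$, and that the good set is invariant under $s\mapsto -s$ (which gives unbiasedness), supply details the paper leaves implicit.
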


\begin{proof}
For $e=uv$, use $C_e=(K')^{-1}\operatorname{diag}(e_u+e_v)\oplus(K'\sqrt\Delta)^{-1}(E_{uv}+E_{vu})$. Its squared Frobenius norm is $(2+2/\Delta)/(K')^2\le4/(K')^2$. The
off-diagonal variance is at most $1/(K')^2$, because
$(E_{uv}+E_{vu})^2=E_{uu}+E_{vv}$ and their sum is the degree matrix.
The total trace is at most $4m/(K')^2$. Apply (S) and choose
$(K')^2\ge\max(\nu_0^{-1},4q_0^{-1},8C/\log2)$.
\end{proof}

The same signs control all disjoint cut rectangles:
$|\sum_{e\in E(S,T)}s_e|\le K'\sqrt{\Delta|S||T|}$, by testing $A_s$ on indicator vectors. A star forces the
$\sqrt\Delta$ order for every signing, while odd degrees prevent exact
signed degree zero. The theorem does not claim a sharp Ramanujan
constant. Historical graph-signing comparisons include Bilu--Linial and
the bipartite interlacing results of Marcus--Spielman--Srivastava; they
are not used as proof inputs~\citep{BL06,MSS15a}.

\subsubsection{Spectral edge bisection with additive degree
control}\label{spectral-edge-bisection-with-additive-degree-control}

For a nonempty positively weighted loopless graph with Laplacian $L$,
let $b_e=e_u-e_v$, $\ell_e=w_e b_e^{\mathsf T}L^\dagger b_e$,
$\varepsilon=\max_e\ell_e$, and $w_{\max}=\max_e w_e$. On the range
of $L$, set $H_e=L^{\dagger/2}w_e b_e b_e^{\mathsf T}L^{\dagger/2}$ and $a_e=\frac{w_e}{\sqrt2 w_{\max}}(e_u+e_v)$. These satisfy $\sum_eH_e=I$,
$\operatorname{tr}H_e=\ell_e\le\varepsilon\le1$, and
$\|a_e\|_2\le1$. Apply (KS+) and take the positive edges. There is
consequently an unbiased half-selection $F$ supported on $|x^{\mathsf T}(2L_F-L)x|<K'\sqrt\varepsilon\,x^{\mathsf T}Lx$ for $x^{\mathsf T}Lx>0$, and $|2d_F(u)-d_G(u)|<\sqrt2K'w_{\max}$ for every $u$.
The expression vanishes on the kernel of $L$. Counts and the joint
entropy budget are controlled by \[
 \frac C{(K')^2}\left[
 \sum_e(w_e/w_{\max})^2+\varepsilon^{-1}\sum_e\ell_e^2\right]
 \le\frac C{(K')^2}\left[
 \sum_e(w_e/w_{\max})^2+\operatorname{rank}L\right].
\] When $K'\sqrt\varepsilon<1$, both parts are spectral approximations
to half the graph. This is selection of original edges with their
original weights and additive degree error, rather than exact degree
preservation or a reweighted sparsifier.

\subsection{Interpretation and remaining
limits}\label{interpretation-and-remaining-limits}

The main additional BSB statements established here are centered
Bernoulli and categorical determinant partitions with unbiased
entropy: barrier witnesses, the variance-contracting ordering
construction, and the fixed-degree multipartite vector partition family. Dynamic and graph statements then follow by explicit reductions that preserve their information constraints and common signs. All positive
statements retain the same dependencies as the master
theorem.

The analysis does not prove constant prescribed-order prefixes, a prefix
Beck--Fiala bound depending only on column sparsity, an irreversible
online improvement over the cited bounds, general centered quadratic
discrepancy, or arbitrary matroid-constrained rounding. Some broader
formulations are false by the counterexamples given above. The existence
of these limits is part of the result; none is replaced by an untested
comparison principle.

The constructions are existential unless an explicit computation is
described. The dynamic root can be rounded by finite enumeration, but no
efficient sampler for the BSB Gibbs law is supplied. The checks
accompanying the report verify finite-dimensional lift identities,
reference rounding laws, variance recurrences, and dynamic invariants.
They are supplementary checks of the reductions, not numerical proofs of
the master theorem or independent validations of its two source
preprints. Historical comparisons identify the relevant models; an
exhaustive priority claim is not made.

\section{Interpolating between  Gaussian and Boolean Small-Ball }
\label{sec:families}

The exact cutoff weight is stable under weak limits, by
Lemma~\ref{lem:determinant}. This makes it possible to derive a full
interpolation family from BSB without differentiating its partition
function.

\subsection{Gaussian--Boolean interpolation}

\begin{theorem}[Heterogeneous Gaussian--Boolean family]
\label{thm:gaussian-boolean}
Let $g_i$ be independent standard Gaussians, let $s_i$ be
independent uniform signs, and assume the two families are independent.
For deterministic $\theta_i\in[0,1]$, put $X_i=\sqrt{\theta_i}\,g_i+\sqrt{1-\theta_i}\,s_i$ and $q_{\mathrm{res}}=\max_i(1-\theta_i)w_i$.
If $v\le v_0$ and $q_{\mathrm{res}}\le q_0$, then
\begin{equation}\label{eq:gb}
 \E W_\beta\!\left(\nlsum_iX_iB_i\right)
 \ge e^{-C\bstar\tau}\qquad(\beta>0).
\end{equation}
\end{theorem}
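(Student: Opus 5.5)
The plan is to reduce the heterogeneous Gaussian--Boolean series to a purely Boolean series, apply the Boolean bound (Theorem~\ref{thm:amplification} / Theorem~\ref{thm:main}) to it, and pass to the limit using weak-limit stability of $W_\beta$ from Lemma~\ref{lem:determinant}. I read $v=\|\sum_iB_i^2\|$, $w_i=\Tr B_i^2$ and $\tau=\sum_i w_i$.

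\textbf{Step 1: Boolean approximation of the Gaussian part.} Each Gaussian $g_i$ is replaced by the normalized Rademacher sum $N^{-1/2}\sum_{m=1}^N s_{i,m}$. This turns $\sum_i X_iB_i$ into a Boolean series with coefficients
\[
\sqrt{\theta_i/N}\,B_i \quad (N \text{ copies each}), \qquad \sqrt{1-\theta_i}\,B_i .
\]
For this enlarged family:
\begin{itemize}
\item The variance is $\|\sum_i(\theta_i+1-\theta_i)B_i^2\|=v$, unchanged.
\item The total trace is $\sum_i(\theta_i+1-\theta_i)w_i=\tau$, also unchanged.
\item The maximal coefficient energy is $\max\{\max_i\theta_iw_i/N,\,q_{\mathrm{res}}\}$, which is at most $q_0$ once $N$ is large.
\end{itemize}

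\textbf{Step 2: Apply the Boolean bound.} Taking $v_0\le a/4$ and $q_0\le b/(2A)$ for the constants of Theorem~\ref{thm:amplification}, applied with $\mathsf E(a,b)$ furnished by Theorem~\ref{thm:existence}, the inequality \eqref{eq:amplification-partition} gives, uniformly in $N$,
\[
\E\,W_\beta\Bigl(\sum_i X_i^{(N)}B_i\Bigr)\ge e^{-C\bstar\tau}.
\]
Here $X_i^{(N)}=\sqrt{\theta_i}\,N^{-1/2}\sum_m s_{i,m}+\sqrt{1-\theta_i}\,s_i$. Alternatively one can use Theorem~\ref{thm:main}, since $\nu\le 4v$.

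\textbf{Step 3: Limit $N\to\infty$.} By the CLT, $X^{(N)}\Rightarrow X$ in law on $\R^n$ with $n$ fixed, so $\sum_iX_i^{(N)}B_i\Rightarrow\sum_iX_iB_i$. Since $W_\beta$ is continuous and bounded on $\Sym_d(\R)$ with values in $[0,1]$ (Lemma~\ref{lem:determinant}), the expectations converge, and \eqref{eq:gb} follows.

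\textbf{Main obstacle.} The delicate point is that the Boolean constants must not depend on $N$ or on the enlarged dimension. This holds because $\mathsf E(a,b)$ is stated in arbitrary size, and variance and trace are exactly preserved by the splitting. I also need continuity of $W_\beta$ across the boundary $\|H\|=1$, which Lemma~\ref{lem:determinant} supplies; it is what makes the weak-limit step legitimate. The small-$\tau$ case is already covered inside Theorem~\ref{thm:amplification} via Lemma~\ref{lem:small-trace}.
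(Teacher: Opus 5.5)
Your proposal is correct and follows essentially the same route as the paper. Both split each Gaussian into $N$ fresh Rademacher signs of weight $\sqrt{\theta_i/N}$, observe that the variance matrix and total trace are unchanged while the maximal coefficient energy becomes $\max_i\{(1-\theta_i)w_i,\theta_iw_i/N\}\le q_0$, apply the Boolean partition bound with $N$-independent constants, and pass to the limit via the CLT and the bounded continuity of $W_\beta$. The only cosmetic difference is the Boolean bound invoked: the paper applies Theorem~\ref{thm:main} directly, whereas you primarily go through Theorem~\ref{thm:amplification} with $\mathsf E(a,b)$ supplied by Theorem~\ref{thm:existence}, which fits the statement's use of $v$ and is equally valid.
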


\begin{proof}
For a positive integer $m$, use fresh independent signs $r_{ij}$
and define
\[
 S^{(m)}
 =\sum_i\sqrt{1-\theta_i}\,s_iB_i
  +\sum_{i=1}^n\sum_{j=1}^m\sqrt{\theta_i/m}\,r_{ij}B_i.
\]
The expanded Boolean family has exactly the same variance matrix and
total trace:
\begin{equation}\label{eq:gb-parameters}
 V^{(m)}=V,\qquad \tau^{(m)}=\tau,\qquad
 q^{(m)}=\max_i\{(1-\theta_i)w_i,\theta_iw_i/m\}.
\end{equation}
For all sufficiently large $m$, the last quantity is at most $q_0$.
Apply Theorem~\ref{thm:main}. The joint central limit theorem gives $S^{(m)}\Rightarrow\sum_i\bigl(\sqrt{1-\theta_i}\,s_i+\sqrt{\theta_i}\,g_i\bigr)B_i$.
Since $W_\beta$ is bounded and continuous, its expectations converge,
and the bound passes to the limit.
The constants are exactly those of Theorem~\ref{thm:main}.
\end{proof}

For a common parameter $\theta$, the hypothesis is
$(1-\theta)q\le q_0$. At $\theta=0$ one obtains BSB; at
$\theta=1$ no coefficient-energy restriction remains: $v\le v_0$ implies $\E_gW_\beta(S_g)\ge e^{-C\bstar\tau}$.
The same constants work at every point of the path.

\begin{proposition}[Entropy ordering of the path]\label{prop:entropy-path}
The law of $X_\theta=\sqrt{\theta}\,g+\sqrt{1-\theta}\,s$ has
variance one. Its differential entropy is nondecreasing on
$0<\theta\le1$, with maximum at the Gaussian endpoint.
As $\theta\downarrow0$, the laws converge weakly to the uniform
sign law.
\end{proposition}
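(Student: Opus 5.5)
The statement has three parts, and I would treat them in order: the variance identity, then weak convergence, and then entropy monotonicity, which carries most of the content.

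\emph{Variance and weak convergence.} Since $g$ and $s$ are independent, centered, and have unit variance, $\operatorname{Var}(X_\theta)=\theta+(1-\theta)=1$. For the limit, note that $X_\theta-s=\sqrt\theta\,g+(\sqrt{1-\theta}-1)s\to0$ almost surely as $\theta\downarrow0$. Almost sure convergence implies weak convergence, so the laws converge to the uniform sign law. For $\theta>0$ the law of $X_\theta$ is the two-component Gaussian mixture $\tfrac12N(\sqrt{1-\theta},\theta)+\tfrac12N(-\sqrt{1-\theta},\theta)$. This density is smooth with Gaussian tails, so its differential entropy $h(X_\theta)$ is finite.

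\emph{Entropy monotonicity via a Markov (Ornstein--Uhlenbeck) semigroup.} The key observation is that the path is an Ornstein--Uhlenbeck flow started at $s$. Fix $0<\theta<\theta'\le1$ and define $\delta\in(0,1]$ by $1-\theta'=(1-\delta)(1-\theta)$. Let $g'$ be an independent standard Gaussian. Then
\[
\sqrt{1-\delta}\,X_\theta+\sqrt\delta\,g'=\sqrt{(1-\delta)(1-\theta)}\,s+\bigl((1-\delta)\theta+\delta\bigr)^{1/2}\tilde g .
\]
Here $\tilde g$ is standard Gaussian, and $(1-\delta)\theta+\delta=1-(1-\delta)(1-\theta)=\theta'$. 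The right-hand side therefore has the law of $X_{\theta'}$.

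Let $P_\delta$ be the Markov kernel $y\mapsto\Law(\sqrt{1-\delta}\,y+\sqrt\delta\,g')$. The standard Gaussian $\gamma$ is invariant under $P_\delta$. The data-processing inequality for relative entropy then gives
\[
D(\Law X_{\theta'}\|\gamma)=D(P_\delta\Law X_\theta\|P_\delta\gamma)\le D(\Law X_\theta\|\gamma).
\]

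\emph{Passing from relative entropy to differential entropy.} For any density $f$ with $\int y^2f=1$, we have $D(f\|\gamma)=-h(f)+\tfrac12\log(2\pi)+\tfrac12\int y^2f=h(\gamma)-h(f)$. Since every $X_\theta$ has variance one, $h(X_\theta)=h(\gamma)-D(\Law X_\theta\|\gamma)$. The inequality above therefore says that $h$ is nondecreasing on $(0,1]$. Moreover $h(X_\theta)\le h(\gamma)$ because $D\ge0$, with equality at $\theta=1$, where $X_1=g$. This identifies the maximum at the Gaussian endpoint.

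\emph{Expected obstacle.} The main care point is only the bookkeeping in the composition identity together with the finiteness of $D(\Law X_\theta\|\gamma)$, so that the data-processing step is not vacuous. Finiteness follows from the explicit mixture density, since both $h$ and the second moment are finite. If one prefers a differential argument, de Bruijn's identity along the Ornstein--Uhlenbeck flow gives the derivative of $D$ as minus the relative Fisher information, which is nonpositive. The data-processing route avoids any smoothness justification, so I would use it.
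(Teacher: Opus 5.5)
Your proof is correct and follows essentially the same route as the paper. Both use the $\gamma$-invariant Ornstein--Uhlenbeck channel $X_{\theta'}\stackrel{d}{=}\sqrt{1-\delta}\,X_\theta+\sqrt\delta\,g'$ (your $1-\delta$ is the paper's $a^2=(1-\theta_2)/(1-\theta_1)$), contraction of $D(\cdot\|\gamma)$ under it (you cite data processing where the paper invokes Jensen, which amounts to the same thing), and the identity $D(\Law X_\theta\|\gamma)=h(\gamma)-h(X_\theta)$ for unit-variance laws, with finiteness from the Gaussian-mixture density and weak convergence checked directly.
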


\begin{proof}
For $0<\theta_1<\theta_2\le1$, let $a^2=(1-\theta_2)/(1-\theta_1)$.
With an independent standard Gaussian $g'$,
$X_{\theta_2}\stackrel{\dd}{=}aX_{\theta_1}+\sqrt{1-a^2}\,g'$.
This Gaussian channel preserves the standard Gaussian law $\gamma$.
Its relative-entropy contraction follows from Jensen's inequality:
if $f=\dd P/\dd\gamma$, the output density relative to $\gamma$
is a conditional expectation of $f$ under the stationary channel,
and $x\mapsto x\log x$ is convex. Hence
$D(\Law(X_{\theta_2})\|\gamma)\le D(\Law(X_{\theta_1})\|\gamma)$.
For these mean-zero variance-one laws,
$D(\Law(X_\theta)\|\gamma)=\tfrac12\log(2\pi e)-h(X_\theta)$,
which proves the entropy assertion. Their positive-$\theta$
densities are two-component Gaussian mixtures, so these entropies
are finite. Weak convergence at zero follows directly from the
definition. No differential entropy is assigned to the discrete
endpoint.
\end{proof}

\Needspace{15\baselineskip}
\subsection{Signed magnitudes and power laws}

\begin{theorem}[Conditional magnitude principle]\label{thm:magnitudes}
Let $R_i\ge0$ be jointly distributed random magnitudes, independent of
independent uniform signs $s_i$. Suppose, almost surely,
\begin{equation}\label{eq:magnitude-hyp}
 \left\|\nlsum_iR_i^2B_i^2\right\|\le v_0,\qquad
 \max_i R_i^2w_i\le q_0.
\end{equation}
Then
\begin{equation}\label{eq:magnitudes}
 \E W_\beta\!\left(\nlsum_i s_iR_iB_i\right)
 \ge
 \exp\left\{-C\bstar\,\E\nlsum_iR_i^2w_i\right\}.
\end{equation}
The magnitudes need not be independent.
\end{theorem}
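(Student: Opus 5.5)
Condition on the magnitudes and then average with Jensen's inequality. Fix a realization $R=(R_1,\dots,R_n)$ and look at the deterministic family $\widetilde B_i:=R_iB_i$. For this family the hypothesis \eqref{eq:magnitude-hyp} gives $\|\sum_i\widetilde B_i^2\|\le v_0$ and $\max_i\Tr\widetilde B_i^2=\max_iR_i^2w_i\le q_0$, with total trace $\widetilde\tau(R)=\sum_iR_i^2w_i$. We interpret $v_0,q_0$ as the thresholds of the master BSB theorem. Then $\nu(\widetilde B)\le4\|\sum_i\widetilde B_i^2\|\le4v_0$, and we assume $v_0$ is chosen so that $4v_0\le\nu_0$ and $q_0\le q_1$. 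Under that assumption Theorem~\ref{thm:main} with $r=1$ applies to $\widetilde B$ and gives
\[
\E_s\bigl[W_\beta\bigl(\nlsum_i s_iR_iB_i\bigr)\,\big|\,R\bigr]\ge\exp\bigl\{-C_1\bstar\,\widetilde\tau(R)\bigr\}
\]
for almost every $R$. This step uses that the signs are independent of $R$, so conditionally on $R$ they remain uniform on $\{-1,1\}^n$.

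Next take the expectation over $R$. By the tower property, $\E W_\beta(\sum_i s_iR_iB_i)\ge\E_R\exp\{-C_1\bstar\widetilde\tau(R)\}$. Since $t\mapsto e^{-C_1\bstar t}$ is convex, Jensen's inequality gives
\[
\E_R\exp\{-C_1\bstar\widetilde\tau(R)\}\ge\exp\{-C_1\bstar\,\E\widetilde\tau(R)\}=\exp\Bigl\{-C_1\bstar\,\E\nlsum_iR_i^2w_i\Bigr\}.
\]
This is \eqref{eq:magnitudes} with $C=C_1$. No independence among the $R_i$ is used anywhere, because the main theorem is applied pointwise in $R$. If $\E\sum_iR_i^2w_i=\infty$ the bound is trivial.

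The main care point is measurability and the regular conditional law. Both are harmless here: $W_\beta$ is bounded and continuous by Lemma~\ref{lem:determinant}, and the sign expectation is a finite average, so the map $R\mapsto\E_sW_\beta(\sum_is_iR_iB_i)$ is continuous and Fubini applies. A second point is matching constants. If $v_0$ refers to the usual variance, it must be at most $\nu_0/4$; alternatively one can state the hypothesis directly with $\nu((R_iB_i)_i)\le\nu_0$, noting that $\nu$ scales correctly under the coefficientwise rescaling because $P_U$ is linear. With these choices the argument is a direct corollary of Theorem~\ref{thm:main}.
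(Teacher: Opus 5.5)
Your proposal is correct and follows the paper's proof essentially verbatim. You condition on $R$, apply Theorem~\ref{thm:main} pointwise to the family $R_iB_i$, then average and use Jensen for the convex map $t\mapsto e^{-C\bstar t}$. Your conversion of the variance threshold via $\nu\le 4\|\nlsum_i R_i^2B_i^2\|$ (so $v_0\le\nu_0/4$ and $q_0\le q_1$) makes explicit a constant-matching step that the paper leaves implicit.
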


\begin{proof}
Condition on $R=(R_1,\ldots,R_n)$. The coefficients $R_iB_i$
satisfy the hypotheses of Theorem~\ref{thm:main}, so the conditional partition
is at least $\exp\{-C\bstar\sum_iR_i^2w_i\}$. Average this inequality
and apply Jensen to the exponential. The expectation in
\eqref{eq:magnitudes} is finite by \eqref{eq:magnitude-hyp};
zero coefficients may first be removed.
\end{proof}

\begin{corollary}[Power-law family]\label{cor:power}
For $\lambda_i>0$, let $R_i$ have distribution function
$\Prob(R_i\le r)=r^{\lambda_i}$, $0\le r\le1$, and be independent
of independent signs $s_i$. Under $v\le v_0,q\le q_0$,
\begin{equation}\label{eq:power}
 \E W_\beta\!\left(\nlsum_i s_iR_iB_i\right)
 \ge\exp\left\{-C\bstar
       \nlsum_i\frac{\lambda_i}{\lambda_i+2}w_i\right\}.
\end{equation}
For the variance-one variables $\widehat X_i=\sqrt{(\lambda_i+2)/\lambda_i}\,s_iR_i$, $\lambda_i\ge1$,
one has
\begin{equation}\label{eq:standard-power}
 v\le v_0/3,\quad q\le q_0/3
 \quad\Longrightarrow\quad
 \E W_\beta\!\left(\nlsum_i\widehat X_iB_i\right)
 \ge e^{-C\bstar\tau}.
\end{equation}
\end{corollary}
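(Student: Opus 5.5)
The plan is to deduce Corollary~\ref{cor:power} directly from the conditional magnitude principle, Theorem~\ref{thm:magnitudes}. For the first claim \eqref{eq:power}, take the independent magnitudes $R_i\in[0,1]$ with $\Prob(R_i\le r)=r^{\lambda_i}$. Since $R_i\le1$ almost surely, $R_i^2B_i^2\preceq B_i^2$, so $\|\sum_iR_i^2B_i^2\|\le\|\sum_iB_i^2\|=v\le v_0$ and $\max_iR_i^2w_i\le q\le q_0$ almost surely. This is exactly the hypothesis \eqref{eq:magnitude-hyp}. Theorem~\ref{thm:magnitudes} then gives the lower bound $\exp\{-C\bstar\sum_i\E[R_i^2]w_i\}$. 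It remains to compute the second moment of the power law: the density is $\lambda r^{\lambda-1}$ on $[0,1]$, so $\E R_i^2=\int_0^1\lambda_i r^{\lambda_i+1}\,\dd r=\lambda_i/(\lambda_i+2)$. This yields \eqref{eq:power}.

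For the standardized statement \eqref{eq:standard-power}, apply Theorem~\ref{thm:magnitudes} to the magnitudes $\widehat R_i:=c_iR_i$, where $c_i=\sqrt{(\lambda_i+2)/\lambda_i}$, so that $\widehat X_iB_i=s_i\widehat R_iB_i$. The sign symmetry is preserved, since $s_i$ remains independent of $\widehat R_i$. For $\lambda_i\ge1$ we have $c_i^2=1+2/\lambda_i\le3$, hence $\widehat R_i^2\le3$ almost surely. Therefore $\|\sum_i\widehat R_i^2B_i^2\|\le3\|\sum_iB_i^2\|\le3v\le v_0$, using that the matrices $B_i^2$ are positive semidefinite, and $\max_i\widehat R_i^2w_i\le3q\le q_0$. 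The expected energy is
\[
\E\sum_i\widehat R_i^2w_i=\sum_ic_i^2\frac{\lambda_i}{\lambda_i+2}w_i=\sum_iw_i=\tau,
\]
which gives \eqref{eq:standard-power}. This step also confirms the variance-one normalization, since $\E\widehat X_i^2=c_i^2\E R_i^2=1$.

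I do not expect a real obstacle. The only points to check are the almost-sure nature of the hypotheses in Theorem~\ref{thm:magnitudes}, which follows from the bounded supports ($R_i\le1$, and $c_i^2\le3$ when $\lambda_i\ge1$), and the monotonicity $\|\sum_i\alpha_iB_i^2\|\le(\max_i\alpha_i)\|\sum_iB_i^2\|$ for $\alpha_i\ge0$. The restriction $\lambda_i\ge1$ is used only to obtain the uniform bound $c_i^2\le3$; this is what explains the factor $1/3$ in the hypotheses.
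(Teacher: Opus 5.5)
Your proposal is correct and follows the same route as the paper's proof. Both bound the magnitudes almost surely (by $1$, resp.\ $\widehat R_i^2\le 3$ when $\lambda_i\ge1$), check the hypotheses of Theorem~\ref{thm:magnitudes}, and use $\E R_i^2=\lambda_i/(\lambda_i+2)$ to get the expected energy $\sum_i\frac{\lambda_i}{\lambda_i+2}w_i$, which equals exactly $\tau$ after standardization. You are also more explicit than the paper about the PSD comparison $\sum_i\widehat R_i^2B_i^2\preceq 3\sum_iB_i^2$.
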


\begin{proof}
The magnitudes are at most one, and integration gives
$\E R_i^2=\lambda_i/(\lambda_i+2)$.
Apply Theorem~\ref{thm:magnitudes}. For the standardized variables, the
squared magnitudes are at most three and have expectation one, which
gives \eqref{eq:standard-power}.
\end{proof}

When all $\lambda_i=\lambda$, the coefficient law has density
$\frac{\lambda}{2}|x|^{\lambda-1}\mathbf1_{(-1,1)}(x)$.
The estimates are uniform as $\lambda\to\infty$, when the law
converges to the uniform sign law. This limiting statement is a
consistency consequence of BSB, not an assumption in its proof.

\subsection{A combined family}

\begin{theorem}[Gaussian terms with signed magnitudes]\label{thm:combined}
Let $a_i\ge0$ be deterministic. Let $g_i,s_i$ be independent
standard Gaussian and uniform sign variables, independent of the
joint magnitude vector $R$. If, almost surely,
\begin{equation}\label{eq:combined-hyp}
 \left\|\nlsum_i(a_i^2+R_i^2)B_i^2\right\|\le v_0,\qquad
 \max_iR_i^2w_i\le q_0,
\end{equation}
then
\begin{equation}\label{eq:combined}
 \E W_\beta\!\left(\nlsum_i(a_ig_i+s_iR_i)B_i\right)
 \ge\exp\left\{-C\bstar\nlsum_i(a_i^2+\E R_i^2)w_i\right\}.
\end{equation}
\end{theorem}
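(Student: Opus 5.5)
The plan combines the two preceding results. The Gaussian part is handled by the Boolean replication and central limit argument from the proof of Theorem~\ref{thm:gaussian-boolean}. The random magnitudes are handled by conditioning, exactly as in Theorem~\ref{thm:magnitudes}.

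\textbf{Step 1: conditioning and Jensen.} Condition on the magnitude vector $R$; since $R$ is independent of $(g,s)$, the conditional law of $(g,s)$ is unchanged. It then suffices to prove the deterministic-coefficient statement: for fixed $a_i\ge0$ and $R_i\ge0$ satisfying \eqref{eq:combined-hyp},
\[
\E_{g,s} W_\beta\Bigl(\nlsum_i(a_ig_i+s_iR_i)B_i\Bigr)\ge\exp\Bigl\{-C\bstar\nlsum_i(a_i^2+R_i^2)w_i\Bigr\}.
\]
Taking expectation over $R$ and applying Jensen to the convex function $x\mapsto e^{-x}$ then gives \eqref{eq:combined}. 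As in Theorem~\ref{thm:magnitudes}, items with $a_i=R_i=0$ can be discarded.

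\textbf{Step 2: replicating the Gaussian part.} Fix $a,R$ and an integer $m$, and take fresh signs $r_{ij}$. Consider the Boolean family consisting of $R_iB_i$ for each $i$, together with $m$ copies $(a_i/\sqrt m)B_i$ for $j=1,\dots,m$. For this family:
\begin{itemize}
\item the variance matrix is $\sum_i(a_i^2+R_i^2)B_i^2$, so its norm is at most $v_0$;
\item the total trace is $\sum_i(a_i^2+R_i^2)w_i$;
\item the maximal coefficient energy is $\max_i\max\{R_i^2w_i,\,a_i^2w_i/m\}$, which is at most $q_0$ once $m$ is large, because $R_i^2w_i\le q_0$ by hypothesis.
\end{itemize}
Here I read $v_0,q_0$ as in Theorem~\ref{thm:gaussian-boolean}. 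Since $\nu\le4v$, taking $v_0\le\nu_0/4$ makes the hypothesis of Theorem~\ref{thm:main} hold (with $r=1$), and that theorem gives the deterministic bound for the replicated sum $S^{(m)}$ with constant $C=C_1$, uniformly in $m$.

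\textbf{Step 3: passing to the limit.} By the central limit theorem, $\sum_j r_{ij}/\sqrt m\Rightarrow g_i$ jointly in $i$ and independently of $s$. Hence $S^{(m)}\Rightarrow\sum_i(a_ig_i+s_iR_i)B_i$. Because $W_\beta$ is bounded and continuous (Lemma~\ref{lem:determinant}), the expectations converge and the lower bound, which does not depend on $m$, passes to the limit.

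\textbf{Main obstacle.} The only delicate point is bookkeeping: the hypotheses must be checked almost surely in $R$, and the constants must be uniform in $m$ and in $R$, so that the conditional bound is valid pointwise before averaging. Measurability of the conditional expectation in $R$ is routine.
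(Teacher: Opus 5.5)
Your proof is correct and is essentially the paper's. The paper also conditions on $R$, applies Theorem~\ref{thm:gaussian-boolean} to the coefficients $c_iB_i$ with $c_i=(a_i^2+R_i^2)^{1/2}$ and $\theta_i=a_i^2/c_i^2$, and finishes with Jensen. The only difference is that you inline that theorem's sign-replication and central-limit argument instead of citing it. This avoids normalizing by $c_i$, and it makes explicit the constant choice $v_0\le\nu_0/4$, $q_0\le q_1$ that the paper leaves implicit.
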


\begin{proof}
Condition on $R$. Set $c_i=(a_i^2+R_i^2)^{1/2}$, omitting any
zero $c_i$, and apply Theorem~\ref{thm:gaussian-boolean} to coefficients
$c_iB_i$ with $\theta_i=a_i^2/c_i^2$.
Its variance condition is the first inequality in
\eqref{eq:combined-hyp}, while its residual coefficient energy is
exactly $R_i^2w_i$. The conditional total trace is
$\sum_i(a_i^2+R_i^2)w_i$. Average the resulting bound and apply
Jensen's inequality.
\end{proof}

The coefficient-energy condition in Theorem~\ref{thm:combined} is imposed only
on the signed magnitudes. Gaussian terms can be divided into
arbitrarily many independent smaller coefficients, as in
\eqref{eq:gb-parameters}.

\section{Constructive coloring for Kadison--Singer}
\label{app:ks}
\darkred{NOTE: We decided to retain the section below for completeness, and for noting an observation that might be of interest to some readers; it has not yet received a rigorous round of care and editing. We will update it in the next version of the paper.}

\subsection{Upper bounds for the constructive method}
\citet{EJ26} give a deterministic polynomial-time
signing algorithm for rank-1 KS problems. We record below an easy extension of their work to the general rank case.

The following trace-map modification extends
their construction to rational PSD Hermitian matrices of arbitrary rank:
\begin{equation}
\label{eq:ks-const}
 \left\|\nlsum_{i=1}^n s_i A_i\right\|
 \le16\left\|\nlsum_{i=1}^n\operatorname{tr}(A_i)A_i\right\|^{1/2}.
\end{equation}
In particular, $\sum_i A_i=I$ and $\operatorname{tr}(A_i)\le\varepsilon$
give discrepancy at most $16\sqrt\varepsilon$, with one sign per matrix.

\emph{Extension and constants.}
The all-zero case is trivial.  Otherwise discard zero matrices, put
$T=\sum_i\operatorname{tr}(A_i)$, $M_i=A_i/T$, and
$W=\sum_i\operatorname{tr}(M_i)M_i$.  Choose rational parameters
\[
 \|W\|\le\nu\le\min\{1,1.001\|W\|\},\quad
 c=60,\quad \rho=\nu/d,\quad
 \frac{\sqrt\nu}{200n}\le\lambda\le\frac{\sqrt\nu}{100n}.
\]
For $x\in[-1,1]^n$, use $S=\sum_i x_iM_i$,
$\psi_i=(1-x_i^2)^{1/3}$, $\Phi=\sum_i\psi_i$, and replace the
rank-one reservoir by the positive self-adjoint map
\[
 \mathcal E_x(Z)=c\sum_i\psi_i\operatorname{tr}(M_iZ)M_i.
\]
Run their descent algorithm on $\Psi=R+\lambda\Phi$, where
\[
 R(x)=\min_{t,X,Y\succ0}\left\{
 t+\rho\operatorname{tr}(X+Y):
 \begin{array}{l}
 X^{-1}+S+\mathcal E_x(Y)\preceq tI,\\
 Y^{-1}-S+\mathcal E_x(X)\preceq tI
 \end{array}\right\}.
\]
This is an SDP by Schur complements.  For $M,N\succeq0$, $X\succ0$,
and $D=D^*$, the needed replacements for rank-one identities are
\[
 \begin{gathered}
 MXM\preceq\operatorname{tr}(MX)M,\qquad
 0\le\operatorname{tr}(MXNX)
 \le\operatorname{tr}(MX)\operatorname{tr}(NX),\\
 |\operatorname{tr}(MD)|^2
 \le\operatorname{tr}(MX)\operatorname{tr}(MDX^{-1}D).
 \end{gathered}
\]
Thus, in~\cite[Section~4.2]{EJ26}, replace $v_i^*Zv_i$ by
$\operatorname{tr}(M_iZ)$ and $|v_i^*Xv_j|^2$ by
$\operatorname{tr}(M_iXM_jX)$.  The response and dual equations remain
exact; the interaction Gram matrices remain PSD and entrywise nonnegative;
the diagonal energy identities become upper bounds in the required direction.

For completeness, the following verifies the numerical constants, using
the rescaled notation of that section.  With $m$ active coordinates,
truncate the two weighted interaction matrices at singular value
$\theta=1/3$ and use covariance parameter $\kappa=3$.
Each squared Frobenius norm is at most $m/c$, giving
\[
 \operatorname{tr}\Gamma\ge
 \left(1-\frac{2}{c\theta^2}-\frac{2}{\kappa}\right)m=\frac{m}{30},
 \qquad
 \mathbb E E_h\le\frac{27}{4}
 \sum_i(\widetilde a_i z_i+\widetilde b_i w_i)\omega_i.
\]
Young's inequality $2\sqrt{EB}\le(3/2)E+(2/3)B$ improves the curvature
estimate to $\nabla^2R[h,h]\le(7/2)E_h-(2c/3)J_h$, where
$J_h=\sum_i(-\psi_i'')h_i^2(b_ip_i+a_iq_i)$.
Also $(-\psi'')/(\alpha\beta)\ge3/5$: by symmetry take $x\ge0$;
the light-coordinate inequalities give $\alpha\le1$ and
$\beta\le(3-x)/(3(1-x))$, whence
\[
 \frac{-\psi''}{\alpha\beta}
 \ge\frac{2(3+x^2)}{3(3+2x-x^2)(1-x^2)^{2/3}}
 \ge\frac{2(3+x^2)}{3(3+2x-3x^2)}>\frac35.
\]
Here $(1-x^2)^{2/3}\le1-2x^2/3$ and $37x^2-18x+3>0$.
Set $\Sigma=\mathbb E hh^\top$,
$\mu_i=-(189/16)(\widetilde a_i-\widetilde b_i)\omega_i/T_i$, and
$\mathcal Df=\nabla f\cdot\mu+\tfrac12\operatorname{tr}(\nabla^2f\Sigma)$.
The drift cancellation now gives
\[
 \mathcal DR\le-\frac{3}{16}
 \sum_i(\widetilde a_iw_i+\widetilde b_i z_i)\omega_i\le0,
 \qquad
 \mathcal D\Phi\le-\frac{17}{320}\sum_i\omega_i.
\]
The latter uses $|\psi_i'\mu_i|\le(63/160)(-\psi_i'')\omega_i/T_i^2$
and $(-\psi_i'')/T_i^2\ge1/2$.
Consequently the same endpoint/gradient/negative-curvature algorithm
reaches a vertex without increasing $\Psi$; no randomization is needed.
Its movement and precision constants are enlarged to accommodate
$\operatorname{tr}\Gamma\ge m/30$ and the displayed drift margin.
Polynomial bit complexity is preserved: $\nu\ge1/(nd)$,
$\rho^{-1}\le nd^2$, and the positive-map KKT conditioning argument
applies unchanged.  Snap coordinates within $\lambda^2/(2\nu)$ of
an endpoint and evaluate the SDP and its derivatives to inverse-polynomial
accuracy, as in the finite-step descent.
Finally, $\mathcal E_x(I)\preceq c\nu I$ implies
\[
 \|S(s)\|\le\Psi(0)
 \le\bigl(2\sqrt{62}+0.01\bigr)\sqrt\nu
 \le15.766\sqrt{\|W\|}<16\sqrt{\|W\|}.
\]
Rescaling proves the claim. 

\vskip12pt
\noindent\textbf{Note:} More involved arguments yield a constant $< 4$ in~\eqref{eq:ks-const}, but since optimizing the approximation constant is orthogonal to this paper we omit that discussion. We do, however, note some lower bounds on the approximation factor in the section below.


\subsection{Lower bounds for Kadison--Singer signing}
For a family $A=(A_1,\ldots,A_n)$ of positive semidefinite matrices
satisfying $\sum_i A_i=I_d$, write
\[
 \Delta(A):=\min_{s\in\{-1,1\}^n}
       \left\|\nlsum_i s_iA_i\right\|,
 \qquad
 \varepsilon(A):=\max_i\operatorname{tr}(A_i).
\]
Here and below, $\|\cdot\|$ denotes the operator norm.
The coefficient in an absolute bound
$\Delta(A)\le C\sqrt{\varepsilon(A)}$ is distinct from an approximation
ratio relative to $\Delta(A)$.

\begin{proposition}[An unconditional lower bound]
\label{prop:ks-constant-lower-bound}
For every $0<C<2$ and $\varepsilon_0>0$, there are rational, real,
rank-one matrices $A_i\succeq0$ such that
\[
 \nlsum_i A_i=I_d,\qquad
 0<\varepsilon(A)<\varepsilon_0,\qquad
 \Delta(A)>C\sqrt{\varepsilon(A)}.
\]
Consequently, every universal Kadison--Singer signing coefficient is at least $2$.
\end{proposition}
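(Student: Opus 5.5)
The plan is to build, for any $C<2$ and $\varepsilon_0>0$, a single explicit rank-one family. I would first make it real and exact, then rationalize it by a perturbation that changes neither the hypotheses nor the strict inequality.

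\emph{Step 1: reduction to a spectral statement.} For $A_i=u_iu_i^{\mathsf T}$ with $\sum_iu_iu_i^{\mathsf T}=I_d$, a signing $s$ defines $P=\sum_{s_i=1}A_i$. Then $S_s=\sum_is_iA_i=2P-I_d$. Moreover $\operatorname{tr}S_s^{2}=\sum_{i,j}s_is_j\langle u_i,u_j\rangle^2$, and $\|S_s\|^{2p}\ge \operatorname{tr}(S_s^{2p})/d$ for every $p\ge1$. So it suffices to find frames with $\varepsilon(A)\to0$ and some $p=p(\varepsilon)\to\infty$ such that, uniformly over all $s\in\{-1,1\}^n$,
\[
\frac1d\operatorname{tr}\bigl(S_s^{2p}\bigr)\ \ge\ (1-o(1))^{2p}\,\mathrm{Cat}_p\,\sigma_s^{2p},
\qquad
\sigma_s^2:=\frac1d\operatorname{tr}S_s^2\ \ge\ (1-o(1))\,\varepsilon .
\]
Since $\mathrm{Cat}_p^{1/(2p)}\to2$, this gives $\Delta(A)\ge(2-o(1))\sqrt{\varepsilon}$, which exceeds $C\sqrt{\varepsilon}$ once $\varepsilon$ is small enough. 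This is the semicircle-edge heuristic: a ``free'' frame makes every signed sum look like a Wigner matrix of variance $\varepsilon$, and the spectral edge of such a matrix sits at $2\sqrt{\varepsilon}$.

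\emph{Step 2: choice of frame and the second moment.} I would take an equal-norm tight frame whose off-diagonal overlaps are as flat as possible. The concrete choice is an equiangular tight frame, or the $d$ columns of an $n\times n$ orthogonal matrix with delocalized entries (e.g.\ a normalized Hadamard/DFT-type real matrix). In both cases $\|u_i\|^2=d/n=\varepsilon$ and $\langle u_i,u_j\rangle^2=\gamma\approx\varepsilon^2/d$ for $i\ne j$. The second moment is then exact and signing-independent up to a nonnegative term:
\[
\operatorname{tr}S_s^2=n(\varepsilon^2-\gamma)+\gamma\Bigl(\nlsum_is_i\Bigr)^2\ \ge\ n(\varepsilon^2-\gamma),
\]
so $\sigma_s^2\ge\varepsilon(1-O(1/d))$ for every $s$. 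Unbalanced signings only help, consistent with $\|2P-I\|$ growing with imbalance.

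\emph{Step 3 (main obstacle): the higher moments uniformly in $s$.} Expanding $\operatorname{tr}S_s^{2p}$ as a sum over closed words $i_1\cdots i_{2p}$ with weights $s_{i_1}\cdots s_{i_{2p}}\prod\langle u_{i_k},u_{i_{k+1}}\rangle$ splits it into two parts. The words whose index pattern is a noncrossing pairing have all signs squared away and positive weight, and these reproduce $\mathrm{Cat}_p\,\sigma^{2p}d$ up to $1-O(p^2/d)$. The remaining words carry sign-dependent weights. These must be bounded in absolute value, uniformly over all $2^n$ signings, by $o(\mathrm{Cat}_p\varepsilon^pd)$; a union bound over signings is useless here. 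I would try first to exploit the exact algebraic structure of the frame (e.g.\ a group frame, where $\langle u_i,u_j\rangle$ depends only on $i^{-1}j$). Then each non-pairing word sum factors through characters and can be bounded by $\|\cdot\|_{\infty}$ of Gram entries times a combinatorial count, giving an $O(p^{C}/\sqrt d)$ relative error with $p\approx\log d$. If that fails, the fallback is to compare with the random-projection model, whose Jacobi/MANOVA edge for half-selections is $\tfrac12+\sqrt{\varepsilon(1-\varepsilon)}$. That model gives $\|S\|\approx2\sqrt{\varepsilon(1-\varepsilon)}$ with ratio tending to $2$, but it needs a deterministic edge lower bound valid for every subset, which is exactly what the moment method above is meant to supply.

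\emph{Step 4: rationality.} $\Delta(A)$ and $\varepsilon(A)$ are continuous in $(u_i)$, and the conclusion is a strict inequality. It therefore suffices to approximate the real frame by a rational one with $\sum_iu_iu_i^{\mathsf T}=I_d$ exactly. I would do this by approximating the ambient orthogonal $n\times n$ matrix by a rational orthogonal matrix, which is possible because rational orthogonal matrices, e.g.\ via the Cayley transform of rational skew matrices, are dense in $O(n)$. Restricting to $d$ columns then gives rational $u_i$ with rational rank-one $A_i=u_iu_i^{\mathsf T}$ summing to $I_d$.
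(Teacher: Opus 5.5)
\textbf{There is a genuine gap: Step~3 is the whole proposition, and you leave it as a plan rather than an argument.} Your Steps~1, 2 and~4 are routine, and Step~4 is exactly the paper's Cayley-transform rationalization. What must actually be shown is a lower bound on $\|\sum_i s_iA_i\|$ for one fixed frame, simultaneously for all $2^n$ signings. Nothing in the proposal controls the sign-dependent words.

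The mechanism you suggest, ``$\ell_\infty$ of Gram entries times a combinatorial count'', fails badly. Already in $\operatorname{tr}S_s^4$, the words with four distinct indices give the quartic sign form $\sum s_as_bs_cs_e G_{ab}G_{bc}G_{ce}G_{ea}$, where $G_{ab}=\langle u_a,u_b\rangle$. Its entrywise bound is about $n^4\gamma^2\approx d^2$, against a main term of about $2\varepsilon^2 d$. Equiangularity fixes $|G_{ab}|$, but not the triple and higher products that would have to carry the cancellation. Whether any explicit ETF works for every signing is an open-ended question, not a step.

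The frame choice in Step~2 is also unsafe as stated. Take a Sylvester--Hadamard matrix and let the $d$ columns be indexed by a subspace $K\subset\mathbb F_2^m$. Then $\langle u_i,u_j\rangle=\varepsilon\,\mathbf 1[i\oplus j\in K^\perp]$, so every vector is repeated $1/\varepsilon$ times, an even number, and $\Delta(A)=0$. ``Delocalized entries'' alone give no flatness of overlaps.

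Incidentally, the noncrossing words do not reproduce $\mathrm{Cat}_p\sigma^{2p}d$ up to $1-O(p^2/d)$. For any equal-norm tight frame at $p=2$ they give $2d\varepsilon^2(1-\varepsilon)$, which reflects the Jacobi edge $2\sqrt{\varepsilon(1-\varepsilon)}$. This is harmless as $\varepsilon\to0$, but it shows the combinatorics is not as you state it.

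\textbf{The missing idea is the one you explicitly dismissed: randomize the frame, and then the union bound over signings works.} The paper takes $V$ to be the first $d$ rows of a Haar $Q\in SO(n)$, with $d/n\to\theta$, and sets $A_i=v_iv_i^{\mathsf T}$.

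For a fixed balanced sign matrix $D_0$, the matrix $S_0=VD_0V^{\mathsf T}$ has the Wachter law in the limit, with edge $b_\theta=2\sqrt{\theta(1-\theta)}$. So for $t<b_\theta$, the functional $F(Q)=d^{-1}\operatorname{tr}(S_0-tI)_+$ has mean bounded below by a positive constant. By Hoffman--Wielandt, $F$ is also $2/\sqrt d$-Lipschitz in Frobenius norm. Concentration on $SO(n)$ then gives $\Prob\{\lambda_{\max}(S_0)\le t\}\le e^{-c\,nd}$. Because $nd\asymp n^2$, this survives a union bound over the $2^n$ signings.

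Unbalanced signings cost nothing. After a global sign flip there is a balanced $D'\preceq\operatorname{diag}(s)$, so $\|V\operatorname{diag}(s)V^{\mathsf T}\|\ge\lambda_{\max}(VD'V^{\mathsf T})$. By invariance of Haar measure, $VD'V^{\mathsf T}$ has the law of $S_0$. This is precisely the ``edge lower bound valid for every subset'' that your fallback says is missing.

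Together with $\max_i\|v_i\|^2\le\theta+h$ with high probability, this gives $\Delta(A)/\sqrt{\varepsilon(A)}$ arbitrarily close to $2\sqrt{1-\theta}$, which exceeds $C$ once $\theta$ is small. No moment method and no explicit frame are needed.
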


\begin{proof}
Fix $0<\theta<1/2$, let $n$ tend to infinity through even integers,
and choose $d/n\to\theta$.  Let $V$ consist of the first $d$ rows of
a Haar-distributed $Q\in SO(n)$, and set $A_i=v_iv_i^\top$, where $v_i$
are the columns of $V$.  Then $\sum_i A_i=VV^\top=I_d$.
For $D_0=\operatorname{diag}(I_{n/2},-I_{n/2})$, the empirical
spectral measure of $S_0=VD_0V^\top$ converges in probability to
the real Jacobi limiting law
\[
 \rho_\theta(x)=
 \frac{\sqrt{b_\theta^2-x^2}}{2\pi\theta(1-x^2)}
       \mathbf{1}_{\{|x|<b_\theta\}},
 \qquad b_\theta=2\sqrt{\theta(1-\theta)};
\]
see the random-subspace interpretation and Wachter law
in~\cite[Sections~2.1.2 and~2.2]{Johnstone2008}.

Fix $0<t<b_\theta$ and put
$F(Q)=d^{-1}\operatorname{tr}(S_0-tI)_+$, with $(\cdot)_+$
denoting the positive part.  Since the spectra lie in $[-1,1]$,
\[
 \mathbb{E}F\longrightarrow
 m_{\theta,t}:=\int_t^{b_\theta}(x-t)\rho_\theta(x)\,dx>0.
\]
The Hoffman--Wielandt inequality and
$\|VD_0V^\top-WD_0W^\top\|_F\le2\|V-W\|_F$
show that $F$ is $2/\sqrt d$-Lipschitz in the Frobenius metric.
Concentration on $SO(n)$~\cite[Proposition~2.2]{MeckesMeckes2013}
therefore gives, for all sufficiently large $n$,
\[
 \mathbb{P}\{\lambda_{\max}(S_0)\le t\}
 =\mathbb{P}\{F=0\}
 \le \exp(-c_{\theta,t}nd),
 \qquad c_{\theta,t}>0.
\]
For any sign vector $s$, negate $s$ if necessary so that at least
$n/2$ entries are positive.  There is then a balanced diagonal sign
matrix $D'\preceq\operatorname{diag}(s)$, whose compression has the
same law as $S_0$.  Thus
$\|V\operatorname{diag}(s)V^\top\|\le t$ implies
$\lambda_{\max}(VD'V^\top)\le t$.  A union bound over all signings yields
\[
 \mathbb{P}\{\Delta(A)\le t\}
 \le 2^n\exp(-c_{\theta,t}nd)=o(1).
\]
Also, $Q\mapsto\|v_i\|^2$ is $2$-Lipschitz and has mean $d/n$.
The same concentration inequality and a union bound give
$\varepsilon(A)\le\theta+h$ with probability tending to one,
for every fixed $h>0$.

Given $C<2$ and $\varepsilon_0>0$, first choose $\theta>0$ small
enough that $2\sqrt{1-\theta}>C$ and $\theta<\varepsilon_0$,
and then choose $h>0$ and $t$ with
\[
 \theta+h<\varepsilon_0,\qquad
 C\sqrt{\theta+h}<t<b_\theta.
\]
For sufficiently large $n$, both required inequalities hold
simultaneously for some $Q$.  Finally, rational orthogonal matrices
are dense in $SO(n)$, by the Cayley transform of rational
skew-symmetric matrices.  Approximating this $Q$ preserves the strict
inequalities and the exact identity $VV^\top=I_d$, and gives rational
rank-one summands.
\end{proof}

\begin{proposition}[An NP-hard gap at the square-root scale]
\label{prop:ks-hardness-gap}
There is an absolute constant $c_0>0$ such that, for every fixed
integer $q\ge2$ and $\varepsilon_q=3/(2q^2)$, it is NP-hard to
distinguish
\[
 \Delta(A)=0
 \qquad\text{from}\qquad
 \Delta(A)\ge c_0\sqrt{\varepsilon_q},
\]
on rational PSD instances satisfying
$\sum_iA_i=I$, $\operatorname{rank}(A_i)\le3$, and
$\varepsilon(A)\le\varepsilon_q$.
\end{proposition}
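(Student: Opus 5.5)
The plan is a gap-preserving reduction from a constant-gap NP-hard 2-coloring problem, encoded block-diagonally so that $\Delta(A)=0$ becomes an exact balancing condition. I expect the soundness amplification in the second paragraph to be the crux.

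\textbf{Normalization and completeness.} I would take every summand of the form $A_i=P_i/(2q^2)$, with $P_i$ a rank-$3$ orthogonal projection. Then $\operatorname{tr}A_i=3/(2q^2)=\varepsilon_q$ and $\operatorname{rank}A_i\le 3$ hold automatically. The condition $\sum_iA_i=I$ becomes $\sum_iP_i=2q^2 I$, i.e.\ the projections form a tight fusion frame of multiplicity $2q^2$. In the simplest diagonal version each $P_i$ projects onto three coordinates, so the instance is a $3$-uniform hypergraph that is $2q^2$-regular. In that case $\sum_i s_iA_i=0$ means every vertex sees equally many $+$ and $-$ edges. This gives completeness: if the source instance is a YES instance, the intended signing balances every coordinate, so $\Delta(A)=0$ exactly. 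For the source I would use an NP-hard two-coloring problem with a robust gap, such as NAE-$3$SAT or $2$-coloring of $3$-uniform hypergraphs from a PCP, i.e.\ ``perfectly balanceable'' versus ``every signing violates a constant fraction $\delta$ of local constraints.'' Each local constraint would be encoded by a constant-size gadget whose coordinates are covered exactly $2q^2$ times. Items are padded with dummy pairs $(A,A)$, which are balanced by opposite signs, to reach exact regularity. All entries stay rational.

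\textbf{Soundness: the main obstacle.} In the naive diagonal encoding a violated vertex has imbalance only $2$, which gives $\|\sum_is_iA_i\|\ge 1/q^2$. That is a factor $q$ short of the target $c_0\sqrt{\varepsilon_q}\asymp 1/q$. To fix this I would replace each source constraint by a $k$-dimensional block with $k\asymp q^2$. Within the block the rank-$3$ projections are spread by a fixed rational orthogonal design, for example a Hadamard or Cayley-rotated frame of $3$-planes. The design is chosen so that any signing restricted to the block which is \emph{not} the intended balanced one leaves a residual $E$ with
\[
\|E\|_{\mathrm F}^2 \gtrsim k\cdot\frac{q^2}{(2q^2)^2}.
\]
In words, the $\pm$ errors of the $\Theta(q^2)$ participating items add up incoherently rather than cancelling. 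Then $\|E\|\ge \|E\|_{\mathrm F}/\sqrt k\gtrsim 1/q$. Since the design is fixed for fixed $q$, this is a finite check: every signing pattern of the gadget is either exactly balanced or leaves Frobenius mass of order $q^2$ in squared imbalance units. I would verify this check first, and I expect it to be the crux.

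\textbf{Conclusion.} Because the blocks are orthogonal, the global operator norm is the maximum over blocks. Robust soundness of the source then guarantees at least one bad block, and that block forces $\Delta(A)\ge c_0\sqrt{\varepsilon_q}$ with $c_0$ absolute. The reduction has polynomial size for each fixed $q$, which yields NP-hardness of the stated gap problem.
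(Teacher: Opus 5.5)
Your soundness step has a genuine gap. The finite check you plan to verify first is false for every possible gadget. For PSD summands $\|A_i\|\le\|A_i\|_{\mathrm F}\le\operatorname{tr}A_i\le\varepsilon_q$. Completeness forces each block to admit a balanced pattern; flip a single item from it. The new pattern is not the intended balanced one, yet its residual on the block is $\pm2A_i$, with $\|E\|\le 3/q^2$ and $\|E\|_{\mathrm F}^2\le 9/q^4$. That is far below the $\|E\|_{\mathrm F}^2\gtrsim k/(4q^2)\asymp1$ you need. So no local gadget can make a single violated source constraint force $\Delta(A)\gtrsim 1/q$. Using robust soundness only to get ``at least one bad block'' throws away exactly what is needed.

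Spreading by an orthogonal design also cannot amplify. A square rotation applied by congruence to all items preserves both the Frobenius and the operator norm. Rotating different items differently destroys the exact cancellation $\sum_is_iA_i=0$ in the YES case. Separately, padding with dummy pairs $(A,A)$ is unsafe: in a NO instance they can be signed equally to repair imbalances.

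The missing idea is that the $\Theta(q^4)$ incoherent unit errors must come from $\Theta(q^4)$ distinct \emph{violated constraints} collected in one $\Theta(q^2)$-dimensional block. They are combined by a dimension-reducing tight frame that preserves Frobenius mass. The paper's proof proceeds as follows.

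\begin{enumerate}
\item Start from the Spielman--Zhang bounded-occurrence gap for splitting sets of four. Each constraint is satisfied iff its four signs sum to zero, each variable lies in at most three constraints, and NO instances violate a $\gamma$ fraction. Set $D_i=\frac14\operatorname{diag}(\mathbf 1_{\{i\in S_j\}})_j$. Then $\sum_iD_i=I$, rank is at most $3$, and $\operatorname{tr}D_i$ is of order one, not small.
\item With $k=q^2$, take $L=\binom{k}{2}$ disjoint copies and split the $mL$ coordinates into $m$ blocks of size $L$. By averaging, in a NO instance some block $E$ of $\sum_is_iD_i$ has at least $\gamma L$ entries of modulus at least $1/2$.
\item Compress each block by $G=\Pi B/q$, where $B$ is the oriented incidence matrix of $K_k$ and $\Pi$ is an isometry of $\mathbf 1^\perp$ onto $\R^{k-1}$. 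Then $GG^\top=I_{k-1}$ and $\|Ge_e\|^2=2/k$. So $A_i=FD_iF^\top$ satisfies $\sum_iA_i=I$, has rank at most $3$, and has trace at most $3/(2k)=\varepsilon_q$. Completeness is automatic because the congruence is linear in $\sum_is_iD_i$.
\item For soundness, the off-diagonal entries of $BEB^\top$ give $\|GEG^\top\|_{\mathrm F}^2\ge 2\|E\|_{\mathrm F}^2/k^2$. Combined with $\|X\|^2\ge\|X\|_{\mathrm F}^2/(k-1)$, this yields $\|\sum_is_iA_i\|^2\ge\gamma/(4k)=\gamma\varepsilon_q/6$.
\end{enumerate}

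The trace reaches $\varepsilon_q$ only through the compression itself. Fixing $\operatorname{tr}A_i=\varepsilon_q$ before amplifying, as your normalization does, leaves no room for this trade of trace for dimension.
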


\begin{proof}
We use the bounded-occurrence set-splitting gap
of Spielman and Zhang~\cite[Lemma~2]{SpielmanZhang2022}:
for an absolute $\gamma>0$, it is NP-hard to distinguish satisfiable
instances from instances in which every signing violates at least a
$\gamma$ fraction of the constraints.  Each constraint has four
distinct variables, is satisfied when their signs sum to zero, and
each variable occurs at most three times.

Put $k=q^2$ and $L=\binom{k}{2}$.  Take $L$ disjoint copies of an
instance with $m$ constraints, and index the resulting $mL$
constraints by $S_j$.  For each variable define
\[
 D_i=\frac14\operatorname{diag}
       \bigl(\mathbf{1}_{\{i\in S_j\}}\bigr)_{j=1}^{mL}.
\]
Then $\sum_iD_i=I_{mL}$ and $\operatorname{rank}(D_i)\le3$.
In the satisfiable case some signed sum is zero.  In the other case,
every signed sum $D=\sum_i s_iD_i$ has at least $\gamma mL$
diagonal entries of absolute value $\ge 1/2$.

We apply the complete-graph compression underlying
\cite[Lemma~10]{SpielmanZhang2022} directly to these matrices.
Let $B\in\mathbb{R}^{k\times L}$ be an oriented incidence matrix of
$K_k$, and define the rational matrices
\[
 \Pi=
 \left[-\frac{\mathbf{1}_{k-1}}q\ \middle|\
 I_{k-1}-\frac{J_{k-1}}{q(q+1)}\right],
 \qquad
 G=\frac{\Pi B}{q}.
\]
Here $J_r$ is the $r\times r$ all-ones matrix.
Direct calculation gives $\Pi\mathbf{1}_k=0$,
$\Pi\Pi^\top=I_{k-1}$, $GG^\top=I_{k-1}$, and
$\|Ge_e\|^2=2/k$ for every edge coordinate $e$.
Partition the $mL$ coordinates into $m$ blocks of size $L$, put
$F=\bigoplus_{a=1}^m G$, and set $A_i=FD_iF^\top$.  Consequently,
\[
 \nlsum_i A_i=I_{m(k-1)},\qquad
 \operatorname{rank}(A_i)\le3,\qquad
 \operatorname{tr}(A_i)\le\frac{3}{2k}.
\]

For diagonal $E\in\mathbb{R}^{L\times L}$, the matrix
$BEB^\top$ annihilates $\mathbf{1}_k$.  Its off-diagonal
entries give $\|BEB^\top\|_F^2\ge2\|E\|_F^2$.  Since $\Pi$ is
an isometry on $\mathbf{1}_k^\perp$,
\[
 \|GEG^\top\|^2
 \ge\frac{\|GEG^\top\|_F^2}{k-1}
 =\frac{\|BEB^\top\|_F^2}{k^2(k-1)}
 \ge\frac{2\|E\|_F^2}{k^2(k-1)}.
\]
In the unsatisfiable case, some block $E$ of $D$ has at least
$\gamma L$ entries of absolute value $\ge 1/2$.  Hence
\[
 \left\|\nlsum_i s_iA_i\right\|^2
 \ge\frac{2(\gamma L/4)}{k^2(k-1)}
 =\frac{\gamma}{4k}
 =\frac{\gamma}{6}\,\varepsilon_q.
\]
The satisfiable case still has a zero signed sum.  This proves the
claim with $c_0=\sqrt{\gamma/6}$.  The construction is rational and
has size and bit complexity polynomial in the source instance size
and $k$; it retains one sign for each matrix $A_i$.
\end{proof}

In particular, unless $\mathrm{P}=\mathrm{NP}$, no polynomial-time
algorithm can always return a signing with discrepancy at most
$\Delta(A)+c\sqrt{\varepsilon(A)}$ for a fixed $0<c<c_0$.
The analogous bounded-error randomized guarantee would imply
$\mathrm{NP}\subseteq\mathrm{BPP}$.
Proposition~\ref{prop:ks-constant-lower-bound} is an obstruction
to existence, whereas Proposition~\ref{prop:ks-hardness-gap}
concerns optimization relative to the best signing.  Neither
establishes a separation between the best existential and
polynomial-time absolute signing coefficients.

\end{document}